\theoremstyle{plain}
   \newtheorem{theorem}{Theorem}[section]
   \newtheorem{proposition}[theorem]{Proposition}
   \newtheorem{lemma}[theorem]{Lemma}
   \newtheorem*{corollary*}{Corollary}
   \newtheorem{corollary}[theorem]{Corollary}
   \newtheorem*{theorem*}{Theorem}
\theoremstyle{definition}
   \newtheorem{definition}[theorem]{Definition}
   \newtheorem{example}[theorem]{Example}
   \newtheorem{remark}[theorem]{Remark}
\numberwithin{equation}{section}
\DeclareDocumentCommand \ltr { o } {%
  \IfNoValueTF {#1} {%
    \ell_W^{\mathrm{full}} %
  }{%
    \ell_{#1}^{\mathrm{full}}%
  }%
}
\DeclareDocumentCommand \lR { o } {%
  \IfNoValueTF {#1} {%
    \ell_W^{\mathrm{red}} %
  }{%
    \ell_{#1}^{\mathrm{red}}%
  }%
}
\DeclareDocumentCommand \Fred { o } {%
  \IfNoValueTF {#1} {%
    F_W^{\mathrm{red}} %
  }{%
    F_{#1}^{\mathrm{red}}%
  }%
}
\newcommand{\op}[1]{\operatorname{#1}}
\newcommand{\Ftr}{F^\mathrm{full}}
\newcommand{\FFFtr}{\FFF^{\mathrm{full}}}
\newcommand\Symm{\mathfrak{S}}
\newcommand\codim{\operatorname{codim}}
\newcommand\wt{\col}
\newcommand{\col}{\operatorname{col}}
\newcommand{\FFF}{\mathcal{F}}
\newcommand{\RRR}{\mathcal{R}}
\newcommand{\defn}[1]{{\color{blue} \it {#1}}}
\newcommand\CC{{\mathbb{C}}}
\newcommand\ZZ{{\mathbb{Z}}}
\newcommand{\id}{\op{id}}
\begin{document}

\title[$W$-Hurwitz numbers I]{Hurwitz numbers for reflection groups I: Generatingfunctionology}
\author{Theo Douvropoulos, Joel Brewster Lewis, Alejandro H. Morales}

\maketitle

\begin{abstract}
The classical Hurwitz numbers count the fixed-length transitive transposition factorizations of a permutation, with a remarkable product formula for the case of minimum length (\emph{genus} $0$). We study the analogue of these numbers for reflection groups with the following generalization of transitivity: say that a reflection factorization of an element in a reflection group $W$ is \emph{full} if the factors generate the whole group $W$. We compute the generating function for full factorizations of arbitrary length for an arbitrary element in a group in the combinatorial family $G(m, p, n)$ of complex reflection groups in terms of the generating functions of the symmetric group $\Symm_n$ and the cyclic group of order $m/p$. As a corollary, we obtain leading-term formulas which count minimum-length full reflection factorizations of an arbitrary element in $G(m,p,n)$ in terms of the Hurwitz numbers of genus $0$ and $1$ and number-theoretic functions. We also study the structural properties of such generating functions for any complex reflection group; in particular, we show via representation-theoretic methods that they can by expressed as finite sums of exponentials of the variable.

\end{abstract}

\section{Introduction}

A factorization $t_1\cdots t_k=\sigma$ of a given element $\sigma$ in the symmetric group $\Symm_n$ as a product of \emph{transpositions} $t_i$ is said to be \defn{transitive} if the group $\langle t_i\rangle_{i=1}^k$ generated by the factors acts transitively on the set $\{1,\dots,n\}$.  Such factorizations arose initially in the work of Hurwitz, in connection with his study of Riemann surfaces.  Hurwitz gave a sketch of an inductive argument, reproduced in detail in \cite{strehl}, for the following remarkable product formula.

\begin{theorem}[{Hurwitz formula \cite{Hurwitz}}]
\label{thm:S_n genus 0}
  The minimum length of a transitive transposition factorization in $\Symm_n$ of a permutation of cycle type $\lambda:=(\lambda_1,\ldots,\lambda_k)$ is $n+k-2$. The number of such minimum-length factorizations is
\[
H_0(\lambda) =  (n+k-2)!\cdot n^{k-3}\cdot  \prod_{i=1}^k \frac{\lambda_i^{\lambda_i}}{(\lambda_i-1)!}.
\]
\end{theorem}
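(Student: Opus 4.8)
The plan is to separate the two assertions---the value $n+k-2$ of the minimum length, and the count $H_0(\lambda)$---attacking the first by a direct combinatorial accounting and the second by induction powered by a cut-and-join recursion. Throughout, write $n=\lambda_1+\cdots+\lambda_k$.

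First I would establish the lower bound on the length by tracking how the running product $\pi_j=t_1\cdots t_j$ evolves as $j$ increases from $0$ to $\ell$. Multiplying a permutation by a transposition $(a\,b)$ either merges two cycles (when $a,b$ lie in distinct cycles; call this a \emph{join}) or splits one cycle in two (a \emph{cut}), so the number of cycles changes by exactly $\pm 1$ at each step. If $j$ and $u$ denote the numbers of joins and cuts, then $\ell=u+j$ and, since we pass from the identity ($n$ cycles) to $\sigma$ ($k$ cycles), we get $j-u=n-k$, whence $\ell=2j-(n-k)$. Now consider the graph on $\{1,\dots,n\}$ whose edges are the transposed pairs: transitivity is exactly connectedness of this graph, so it must contain at least $n-1$ edges that reduce the number of components, and each such edge joins two points lying in different cycles of the current product and is therefore a join. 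Hence $j\ge n-1$ and $\ell\ge 2(n-1)-(n-k)=n+k-2$, with equality precisely when the factorization uses $n-1$ joins and $k-1$ cuts. This is the genus-$0$ regime.

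For the count at minimum length I would argue by induction, using a cut-and-join recursion in the spirit of Hurwitz's original sketch. The idea is to remove a distinguished transposition---say the last one, $t_\ell=(a\,b)$---and record how the data (cycle type, connectivity, genus) of the remaining factorization $t_1\cdots t_{\ell-1}=\sigma(a\,b)$ degenerates. Because the length drops by one while the genus must stay $0$, deleting $t_\ell$ forces the support graph to split into exactly two connected pieces, each carrying a genus-$0$ transitive factorization of a permutation on a smaller point set; conversely, any compatible pair of such smaller factorizations can be glued back across a transposition. Bookkeeping the choices---which cycles merge, and how the labels and the $n+k-2$ time-slots are distributed among the two pieces---yields a recursion expressing $H_0(\lambda)$ in terms of Hurwitz numbers of partitions with fewer points or fewer parts.

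It then remains to solve this recursion and match it to the closed form. I would verify the base case $\lambda=(n)$, where the claim reduces to the classical statement that an $n$-cycle has $n^{n-2}$ minimal transitive factorizations (equivalently $H_0((n))=n^{n-2}$), most cleanly via the bijection with labeled trees on $n$ vertices and Lagrange inversion; and I would then check that the product $(n+k-2)!\cdot n^{k-3}\cdot\prod_i \lambda_i^{\lambda_i}/(\lambda_i-1)!$ satisfies the recursion, the multinomial factor $(n+k-2)!$ absorbing the interleaving of time-slots and the factors $\lambda_i^{\lambda_i}/(\lambda_i-1)!$ tracking the per-cycle tree structure. An appealing alternative that packages all of this at once is to encode genus-$0$ transitive factorizations as decorated (cactus-like) trees and to extract $H_0(\lambda)$ by a single multivariate Lagrange inversion. \textbf{The main obstacle} I anticipate lies not at either endpoint but in the middle step: controlling connectivity and genus simultaneously under the cut-and-join move, so that the recursion closes exactly and the delicate constants $n^{k-3}$ and $\prod_i \lambda_i^{\lambda_i}/(\lambda_i-1)!$---rather than some nearby expression---emerge. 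This is precisely the point at which the transitivity hypothesis interacts nontrivially with the labeled enumeration.
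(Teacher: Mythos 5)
A preliminary remark: the paper itself does not prove Theorem~\ref{thm:S_n genus 0} --- it is the classical Hurwitz formula, quoted with references to Hurwitz's original sketch, to Strehl's detailed reconstruction of it, and to Goulden--Jackson's rediscovery --- so your proposal must be judged against the standard argument rather than against anything in the text. Your first half (the lower bound) is correct: an edge that reduces the number of components of the partial factorization graph must connect two points lying in distinct cycles of the running product, hence is a join, so $j \ge n-1$ and $\ell = 2j-(n-k) \ge n+k-2$. (To conclude that the minimum \emph{equals} $n+k-2$ you should also exhibit one transitive factorization of that length, e.g.\ factor each cycle minimally and connect the $k$ cycles using $k-1$ doubled transpositions; this is routine but not addressed.)

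The genuine gap is the structural claim on which your recursion rests: it is \emph{false} that deleting the last factor $t_\ell=(a\,b)$ always forces the support graph to ``split into exactly two connected pieces \dots on a smaller point set.'' The correct statement is a dichotomy. If $a$ and $b$ lie in the \emph{same} cycle of $\sigma$, then $\sigma(a\,b)$ has $k+1$ cycles, the remaining length $n+k-3$ lies strictly below the transitive minimum $n+(k+1)-2$, and the factorization does split into exactly two genus-$0$ transitive pieces; this is the only case you describe. But if $a$ and $b$ lie in \emph{different} cycles of $\sigma$, of lengths $\lambda_i$ and $\lambda_j$, then $\sigma(a\,b)$ has $k-1$ cycles (those two cycles merged), the remaining length is exactly $n+(k-1)-2$, and the remaining factorization is still \emph{connected}: $a$ and $b$ lie in the same cycle of $\sigma(a\,b)$, hence in the same component of the graph of $t_1,\dots,t_{\ell-1}$, so removing the edge $\{a,b\}$ changes nothing about connectivity. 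In this case nothing splits; one obtains a minimal transitive factorization, on the same $n$ points, of a permutation whose cycle type is $\lambda$ with two parts merged. The smallest counterexample to your claim is $\lambda=(1,1)$: the unique minimal transitive factorization of $\mathrm{id}\in\Symm_2$ is $(1\,2)(1\,2)$, and deleting the last factor leaves the connected factorization $(1\,2)$ of the $2$-cycle.

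Consequently the recursion you propose to solve is missing an entire family of terms --- the ``cut''/merge terms, which is why the classical equation is called \emph{cut-and-join} --- and the truncated recursion is simply wrong: with only the splitting terms it would yield $H_0(1,1)=0$ instead of $1$, so no bookkeeping of labels and time-slots can make it match the product formula. Once the merge terms are restored (each merged type, $\lambda$ with $\lambda_i,\lambda_j$ replaced by $\lambda_i+\lambda_j$, contributing with weight equal to the number of transpositions cutting a $(\lambda_i+\lambda_j)$-cycle into pieces of sizes $\lambda_i$ and $\lambda_j$), your outline --- base case $\lambda=(n)$ via labeled trees, then verification that the closed form satisfies the recursion --- is exactly Hurwitz's inductive argument as reconstructed by Strehl. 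As written, however, the induction is built on a false dichotomy, and this is not merely the ``anticipated obstacle'' you flag at the end but an error in the stated mechanism.
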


The formula in Theorem~\ref{thm:S_n genus 0}, rediscovered by Goulden and Jackson in \cite{GJ97}, is for what are now called the \defn{(single) Hurwitz numbers of genus $0$}. These numbers also count certain connected planar graphs embedded in the sphere (the {\em planar maps}; see, e.g., \cite{LandoZvonkin}).  In general, the \defn{genus-$g$ Hurwitz numbers} $H_g(\lambda)$ count transitive factorizations $ t_1\cdots  t_c$ of a fixed element $\sigma$ in $\Symm_n$ of cycle type $\lambda$ into $c=n+k+2g-2$ transpositions $ t_i$, as well as certain embedded maps in orientable surfaces of genus $g$.

In the 1980s, work of Stanley \cite{St80} and Jackson \cite{J88} rekindled the interest in the enumeration of factorizations in $\Symm_n$, unrelated to the original topological context. Independently, the next few decades saw the emergence of \emph{Coxeter combinatorics}; one of its main breakthroughs was the realization that theorems about $\Symm_n$ are often shadows of more general results that hold for all reflection groups $W$. In the context of factorizations, this means replacing transpositions in $\Symm_n$ with \emph{reflections} in $W$.

The intersection of these two areas has witnessed a lot of research activity recently \cite{CC,D2,LM,PR}, especially for factorizations of \emph{Coxeter elements} in $W$, generalizing the long cycle case $\lambda=(n)$. In general, however, analogues of Theorem~\ref{thm:S_n genus 0} have been hard to find, not least because it is unclear how to define transitivity in reflection groups (see Section~\ref{sec: transitivity}): in $\Symm_n$, transitivity corresponds to the \emph{connectedness} of the associated embedded map or Riemann surface, but these have no analogues for a general reflection group $W$. 

An equivalent way to interpret the notion of transitivity is to require that the factorization cannot be realized in any \emph{proper} Young subgroup (i.e., a proper subgroup generated by transpositions) of $\Symm_n$. This interpretation makes sense for arbitrary reflection groups $W$, where we will thus say that $t_1\cdots t_k=g$ is a \defn{full reflection factorization} of an element $g\in W$ if the factors $t_i$ are reflections and they generate the \emph{full} group $W$. 

This paper is the first of a series of three \cite{DLM2, DLM3} in which we study the problem of enumerating full reflection factorizations in real and complex reflection groups -- giving formulas for what one might call the \defn{$W$-Hurwitz numbers}.  In the present paper, we focus on the infinite family $G(m, p, n)$ of ``combinatorial'' complex reflection groups; our main result provides a general formula for the (exponential) generating function for full reflection factorizations in $G(m, p, n)$ counted by length, expressed in terms of the corresponding series for the symmetric group $\Symm_n$  and the cyclic group of order $m/p$.

\begin{restatable*}[]{theorem}{mainthm}
\label{thm:main}
For an element $g\in G(m,p,n)$ with $k$ cycles, of colors $a_1, \ldots, a_k$, let $d = \gcd(a_1, \ldots, a_k, p)$. We have

\[
\FFFtr_{m,p,n}(g;z) = \frac{1}{
    m^{n-1}} \cdot 
    \FFFtr_{m,p,1}\big(\zeta_m^{\wt(g)};n\cdot z\big) \cdot 
    \sum_{r \colon r \mid d}\left( \mu(r)\cdot r^{n + k - 2} \cdot \FFFtr_{\Symm_n}\big(\pi_{m/1}(g);(m/r)\cdot z\big) \right),
\]
where $\mu$ is the number-theoretic M\"obius function.
\end{restatable*}

(The series for the cyclic group is given in Example~\ref{ex:cyclic}.  The series for the symmetric group for the identity element is given by the Dubrovin--Yang--Zagier \cite{DYZ} recurrence -- see Remark~\ref{rem: case identity} -- and for other elements is computable using characters -- see Remark~\ref{rem: complexity}.) 

From the generating function in Theorem~\ref{thm:main} we can extract the leading coefficient, which is the number of \emph{minimum-length} full factorizations of an arbitrary element $g$ in $G(m,p,n)$. The answer in full generality (Theorem~\ref{thm:main lead coeff}) involves the usual Hurwitz numbers of genus $0$ and genus $1$ and the \emph{Jordan totient function} $J_2(m)$, which counts elements of order $m$ in the group $(\ZZ/m\ZZ)^2$. For the group $G(m,m,n)$ the leading terms are given by the following formulas.

\begin{corollary*}[of Theorem~\ref{thm:main lead coeff}]
For  $g\in G(m,m,n)$ with $k$ cycles, of lengths $\lambda_1, \ldots, \lambda_k$ and colors $a_1, \ldots, a_k$, let $d = \gcd(a_1, \ldots, a_k, m)$. We have that the number $\Ftr_{m, m, n}(g)$ of minimum-length full reflection factorizations of $g$ is
\[
\Ftr_{m,m,n}(g) = \begin{cases}
    m^{k-1} \cdot H_0(\lambda_1,\ldots,\lambda_k), & \text{ if } d = 1, \\[6pt]
    \dfrac{m^{k+1}}{d^2} J_2(d) \cdot H_1(\lambda_1,\ldots,\lambda_k), & \text{ if } d \neq 1.
    \end{cases}
\]
\end{corollary*}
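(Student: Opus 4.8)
The plan is to derive the statement directly from Theorem~\ref{thm:main} by setting $p = m$ and extracting the lowest-degree coefficient of the resulting exponential generating function. First I would dispose of the cyclic factor: the group $G(m,m,1)$ has order $m/m = 1$, so it is trivial and contains no reflections, whence its only full factorization is the empty one and $\FFFtr_{m,m,1}(\,\cdot\,;nz) \equiv 1$. (Equivalently, every $g \in G(m,m,n)$ has $\wt(g) \equiv 0 \pmod m$, so $\zeta_m^{\wt(g)} = 1$ and the factor is evaluated at the identity of the trivial group.) With this simplification Theorem~\ref{thm:main} collapses to
\[
\FFFtr_{m,m,n}(g;z) = \frac{1}{m^{n-1}} \sum_{r \mid d} \mu(r)\, r^{n+k-2}\, \FFFtr_{\Symm_n}\big(\pi_{m/1}(g);(m/r)\cdot z\big),
\]
where $\pi_{m/1}(g) \in \Symm_n$ has cycle type $\lambda = (\lambda_1,\ldots,\lambda_k)$.

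Next I would substitute the genus expansion of the symmetric-group series. By the definition of the Hurwitz numbers recalled in the introduction, one has $\FFFtr_{\Symm_n}(\pi_{m/1}(g);w) = \sum_{h \geq 0} H_h(\lambda)\, w^{\,n+k+2h-2}/(n+k+2h-2)!$, whose two lowest nonzero terms lie in degrees $n+k-2$ (genus $0$) and $n+k$ (genus $1$), with nothing in the intermediate degree $n+k-1$. Setting $w = (m/r)z$ and collecting the coefficient of $z^{n+k-2}$ across the sum produces the arithmetic factor $\sum_{r \mid d}\mu(r)$, while the coefficient of $z^{n+k}$ produces $\sum_{r \mid d}\mu(r)/r^2$ (the powers of $r$ telescoping against those of $m/r$ in each case). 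The crux of the argument is the evaluation of these two sums: $\sum_{r \mid d}\mu(r) = [d=1]$, so the genus-$0$ contribution survives exactly when $d=1$ and cancels identically otherwise; and $\sum_{r \mid d}\mu(r)/r^2 = \prod_{q \mid d}(1-q^{-2}) = J_2(d)/d^2$, which is precisely the Jordan-totient prefactor appearing in the $d \neq 1$ branch.

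Assembling these, when $d = 1$ the minimum length is $n+k-2$ and the count is $(n+k-2)!\cdot m^{-(n-1)}\cdot H_0(\lambda)\,m^{n+k-2}/(n+k-2)! = m^{k-1}H_0(\lambda)$; when $d \neq 1$ the genus-$0$ coefficient vanishes, the minimum length jumps to $n+k$ (the next degree carrying a nonzero term), and the count becomes $(n+k)!\cdot m^{-(n-1)}\cdot H_1(\lambda)\,m^{n+k}/(n+k)!\cdot J_2(d)/d^2 = \frac{m^{k+1}}{d^2}J_2(d)\,H_1(\lambda)$. The one point I would check most carefully — the genuine content behind the dichotomy — is that the Möbius cancellation at genus $0$ is \emph{exact} when $d>1$ (rather than merely partial) and that \emph{no} further cancellation takes place at genus $1$, i.e.\ that $J_2(d) > 0$ for $d > 1$; both follow from the multiplicativity of $r \mapsto \mu(r)/r^2$ over the squarefree divisors of $d$ together with $J_2(d) = d^2\prod_{q \mid d}(1-q^{-2}) > 0$. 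The remaining steps are the routine conversion of exponential-generating-function coefficients into honest factorization counts.
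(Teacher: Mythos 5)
Your proposal is correct and follows essentially the same route as the paper: specialize Theorem~\ref{thm:main} to $p=m$ (so the cyclic factor is trivially $1$), expand $\FFFtr_{\Symm_n}$ by genus, and observe that the genus-$0$ coefficient carries the factor $\sum_{r\mid d}\mu(r)=[d=1]$ while the genus-$1$ coefficient carries $\sum_{r\mid d}\mu(r)/r^2=J_2(d)/d^2$, which is exactly the content of the paper's Corollary~\ref{cor:full length} together with the $m=p$ case of the proof of Theorem~\ref{thm:main lead coeff}. The only cosmetic difference is that you inline the determination of $\ltr(g)$ with the coefficient extraction, whereas the paper separates these into two statements; the arithmetic (Möbius cancellation and the Jordan-totient identity) is identical.
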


In the sequel \cite{DLM2} to this paper, we will study the \emph{parabolic quasi-Coxeter elements} in a well generated complex reflection group $W$.  This wide class of elements contains the parabolic Coxeter elements, and hence all elements in the symmetric group; we will establish a number of equivalent characterizations of these elements, including in terms of the good behavior of their full factorizations.  In the climax \cite{DLM3}, we will establish a uniform  formula for the number of minimum-length full reflection factorizations of any parabolic quasi-Coxeter element in an arbitrary well generated complex reflection group that very closely models the Hurwitz formula of Theorem~\ref{thm:S_n genus 0}.

We end this introduction with a brief outline of the present paper.  In Section~\ref{sec:Background}, we introduce the main actors: we give background on complex reflection groups, with an emphasis on the combinatorial family, introduce full reflection factorizations, and compare this latter notion with the existing generalizations of transitive factorizations in the literature.  In Section~\ref{Sec: Frob lemma}, we discuss a general technique using character theory to count factorizations, and study the structural properties of their generating functions.  In Section~\ref{Sec: gen fncs combinatorially}, we use combinatorial methods to prove the main theorem (Theorem~\ref{thm:main}).  Finally, as a corollary, in Section~\ref{sec:leading terms} we extract the lowest-order coefficients $\Ftr_{m, p, n}(g)$ of the generating functions $\FFFtr_{m,p,n}(g;z)$.

\section{Reflection groups and reflection factorizations}
\label{sec:Background}

Let $V$ be a finite-dimensional complex vector space with a fixed Hermitian inner product.  A \defn{(unitary) reflection} $ t$ on $V$ is a unitary map whose \defn{fixed space} $V^{ t} := \{v \in V \colon  t(v) = v\}$ is a hyperplane (in other words, $\codim(V^{ t})=1$).  A finite subgroup $W$ of the unitary group $U(V)$ is called a \defn{complex reflection group} if it is generated by its subset $\RRR$ of reflections; we write $\mathcal{A}_W$ for its \defn{reflection arrangement}, i.e., the arrangement of fixed hyperplanes of its reflections $t\in \RRR$. We say that $W$ is \defn{irreducible} if there is no nontrivial subspace of $V$ stabilized by its action.

It is easy to see that if $W_1$ acts on $V_1$, with reflections $\RRR_1$, and $W_2$ acts on $V_2$, with reflections $\RRR_2$, then $W = W_1 \times W_2$ acts on $V = V_1 \oplus V_2$, with reflections $\RRR = (\RRR_1 \times \{\id_2\}) \cup (\{\id_1\} \times \RRR_2)$.  In this case, $W$ fails to be irreducible (as it stabilizes the subspace $V_1$ of $V$).  Shephard and Todd \cite{ShephardTodd} classified the complex reflection groups: every complex reflection group is a product of irreducibles, and every irreducible either belongs to an infinite three-parameter family $G(m,p,n)$ where $m$, $p$, and $n$ are positive integers such that $p$ divides $m$, or is one of 34 exceptional cases, numbered $G_4$ to $G_{37}$.  

\subsection{The combinatorial family $G(m,p,n)$}
\label{sec:combinatorial family}
The infinite family may be 
described as
\begin{equation}\label{def:G(m, p, n)}
G(m, p, n) := \left\{ \begin{array}{c} n \times n \textrm{ monomial matrices whose nonzero entries are} \\
\textrm{$m$th roots of unity with product a $\frac{m}{p}$th root of unity} \end{array} \right\}.
\end{equation}

Writing $\zeta_m := \exp(2\pi i/ m)$ for the primitive $m$th root of unity, the reflections in $G(m, p, n)$ fall into two families: the \defn{transposition-like reflections}, of the form 
\begin{equation}
\label{eq:transposition}
\left[
\renewcommand{\arraystretch}{0.7} 
\begin{array}{c@{}c@{}c@{}cc@{}c@{}cc@{}c@{}c@{}c}
1 &&&&&&&&&& \\[-7pt]
& \ddots &&&&&&&& \\[-3pt]
&& 1 &&&&&&&& \\
&&& &&&& \zeta_m^{-k} &&& \\
&&&& 1 &&&&&& \\[-7pt]
&&&&& \ddots &&&&& \\[-3pt]
&&&&&& 1 &&&& \\
&&& \zeta_m^k &&&&&&& \\
&&&&&&&& 1 && \\[-7pt]
&&&&&&&&& \ddots & \\[-3pt]
&&&&&&&&&& 1
\end{array}
\right]
\end{equation}
for $1 \leq i < j \leq n$ and $k = 0, 1, \ldots, m - 1$, which have order $2$ and fix the hyperplanes $x_j = \zeta_m^{k} x_i$; and, when $p < m$, the \defn{diagonal reflections}, of the form
\begin{equation}
\label{eq:diagonal}
\left[
\renewcommand{\arraystretch}{0.7} 
\begin{array}{c@{}c@{}ccc@{}c@{}c}
1 &&&&&& \\[-7pt]
& \ddots &&&&& \\[-3pt]
&& 1 &&&& \\
&&& \zeta_m^{pk} &&& \\
&&&& 1 && \\[-7pt]
&&&&& \ddots & \\[-3pt]
&&&&&& 1
\end{array}
\right]
\end{equation}
for $i = 1, \ldots, n$ and $k = 1, 2, \ldots, \frac{m}{p} - 1$, which have various orders and fix the hyperplanes $x_i = 0$.

It is natural to represent such groups combinatorially. We may encode each element $w$ of $G(m, 1, n)$ by a pair $[u; a]$ with $u \in \Symm_n$ and $a = (a_1, \ldots, a_n) \in (\ZZ/m\ZZ)^n$, as follows: for $k = 1, \ldots, n$, the nonzero entry in column $k$ of $w$ is in row $u(k)$, and the value of the entry is $\zeta_m^{a_k}$.  With this encoding, it's easy to check that
\begin{equation}
\label{eq:wreath product}
[u; a] \cdot [v; b] = [uv; v(a) + b], \quad \textrm{ where } \quad v(a) := \left( a_{v(1)}, \ldots, a_{v(n)} \right).
\end{equation}
This shows that the group $G(m, 1, n)$ is isomorphic to the \defn{wreath product} $\ZZ/m\ZZ \wr \Symm_n$ of a cyclic group with the symmetric group $\Symm_n$.  If $w = [u; a]$, we say that $u$ is the \defn{underlying permutation} of $w$.

\subsubsection*{Two natural homomorphisms}
For an element $w = [u; a]$ of $G(m, 1, n)$, by a \defn{cycle} of $w$ we mean a cycle of the underlying permutation $u$.  For $S \subseteq \{1, \ldots, n\}$, we say that $\sum_{k \in S} a_k$ is the \defn{color} of $S$; this notion will come up particularly when the elements of $S$ form a cycle in $w$.  When $S = \{1, \ldots, n\}$, we call $a_1 + \ldots + a_n$ the color of the element $[u; a]$, and we denote it $\wt([u; a])$.  In this terminology, $G(m, p, n)$ is the subgroup of $G(m, 1, n)$ containing exactly those elements whose color is a multiple of $p$. 

One may think of the color as a surjective group homomorphism $\wt \colon G(m, p, n) \to p\ZZ / m \ZZ \cong G(m, p, 1)$. In fact, this is one way to see that $G(m,p',n)$ is a \emph{normal} subgroup of $G(m,p,n)$ whenever $p \mid p'$ (since $p'\ZZ/m\ZZ$ is a normal subgroup of $p\ZZ/m\ZZ$).  For any $r \mid m$, there is also another important surjective group homomorphism $G(m, 1, n) \to G(r, 1, n)$ that we define now.
\begin{definition}
For any $r \mid m$, let $\pi_{m / r} : G(m, 1, n) \to G(r, 1, n)$ be the group homomorphism defined at the level of matrices as replacing each copy of $\zeta_m$ with $\zeta_m^{m/r}$; equivalently, viewing $G(r, 1, n)$ as a subgroup of $G(m, 1, n)$, it is the map 
\[
\pi_{m / r}([u; a]) = \left[u;  \frac{m}{r} \cdot a \right].
\]
\end{definition}
When restricted to the subgroup $G(m, p, n)$, the map $\pi_{m / p}$ is a surjective homomorphism onto $G(p, p, n)$.  In particular, for any $[u; a] \in G(m, p, n)$, applying $\pi_{m/1}$ recovers the underlying permutation $u \in \Symm_n$ of $w$.

Conjugacy classes in $G(m, 1, n)$ are commonly indexed by tuples $(\lambda_0, \ldots, \lambda_{m - 1})$ of integer partitions of total size $n$.  In this indexing, $\lambda_j$ is the partition composed of the lengths of the cycles of color $j$.  In other words, two elements in $G(m, 1, n)$ are conjugate if and only if they have the same number of cycles of length $k$ and color $c$ for all $k = 1, \ldots, n$, $c \in \ZZ/m\ZZ$.

\subsubsection*{Graph interpretation and connected subgroups}
It is natural to represent collections of reflections in $G(m, p, n)$ as graphs on the vertex set $\{1, \ldots, n\}$: a transposition-like reflection whose underlying permutation is $(i j)$ can be represented by an edge joining $i$ to $j$, while a diagonal reflection whose nonzero color occurs at position $i$ can be represented by a loop at vertex $i$.

We say that a collection of reflections in $G(m, p, n)$ is \defn{connected} if the associated graph is connected.  It is a basic fact of graph theory that any connected graph contains a spanning tree; consequently, any connected set of reflections in $G(m, p, n)$ contains a connected subset of $n - 1$ transposition-like reflections.  Such a subset always generates a subgroup of $G(m, p, n)$ isomorphic to the symmetric group $\Symm_n$ \cite[Lem.~2.7]{Shi2005}. 

More generally, if $H$ is a subgroup of $G(m, p, n)$ generated by a connected collection of reflections, then $H$ is isomorphic to $G(m', p', n)$ for some integers $m', p'$ such that $m' \mid m$, $p' \mid m'$, and $\frac{m'}{p'} \mid \frac{m}{p}$.  The isomorphism can be concretely realized as conjugation by an appropriate diagonal element of $G(m, 1, n)$.

\subsection{Reflection factorizations and reflection length}

Starting with an arbitrary group $G$ and some generating set $S\subset G$,  the Cayley graph of $G$ with respect to $S$ determines a natural length function on the elements of the group. Indeed, we may define the length of an element $g\in G$ as the size of the shortest path in the Cayley graph from the identity element to $g$.

In the case of a complex reflection group $W$, we pick the set $\RRR$ of reflections as the distinguished generating set.  Then the \defn{reflection length} $\lR(g)$ of an element $g\in W$ is the smallest number $k$ such that there exist reflections $t_1, \ldots, t_k$ that form a factorization $g=t_1\cdots t_k$.  (Here the superscript ``red'' stands for ``reduced''.)

For an arbitrary number $k$, if $t_1,\ldots,t_k$ are reflections such that $t_1\cdots t_k = g$, we say that the tuple $( t_1,\ldots, t_k)$ forms a \defn{reflection factorization} of $g$ of length $k$.

Say that a reflection factorization $t_1 \cdots t_k = g$ of an element $g \in W$ is \defn{full} (relative to $W$) if the factors generate the full group, i.e., if $W = \langle t_1, \ldots, t_k \rangle$.  Observe that, by definition, every reflection factorization of any element $g$ is full \emph{relative to the subgroup generated by the factors}.  
The \defn{full reflection length} $\ltr(g)$ of $g$ is the minimum length of a full reflection factorization:
\[
\ltr(g) := \min \Big\{ k : \exists\ t_1, \ldots, t_k \in \RRR \text{ such that } t_1 \cdots t_k = g \text{ and } \langle t_1, \ldots, t_k \rangle = W \Big\}.
\]

As we wish to enumerate factorizations of arbitrary length $N$, it is convenient to encode the answers via generating functions.  We denote by $\FFF_W(g;z)$ the exponential generating function for the number of reflection factorizations of $g$ of arbitrary length:
\begin{equation}
\label{Eq: defn F_W(g;t)}
\FFF_W(g;z):=\sum_{N\geq 0}\#\left\{(t_1,\ldots,t_N)\in\RRR^N: t_1\cdots t_N=g\right\}\cdot \frac{z^N}{N!}.
\end{equation}
By definition, the lowest-order term of $\FFF_W(g;z)$ occurs in degree $\lR(g)$.

Similarly, we denote by $\FFFtr_W(g;t)$ the exponential generating function for the number of \emph{full} reflection factorizations of $g$ of arbitrary length:
\begin{equation}
\label{Eq: defn Ftr_W(g;t)}
\FFFtr_W(g;z):=\sum_{N\geq 0}\#\left\{(t_1,\ldots,t_N)\in\RRR^N : t_1\cdots t_N=g\text{ and }\langle t_1,\ldots, t_N\rangle=W \right\}\cdot \frac{z^N}{N!}.
\end{equation}
We further write $\Ftr_W(g)$ for the number of minimum-length full factorizations (i.e., with $N=\ltr(g)$), so that the lowest-order term of $\FFF_W(g;t)$ is equal to $\Ftr_W(g)\cdot z^{\ltr(g)}/\ltr(g)!$.

\subsection{Transitive factorizations} \label{sec: transitivity}

In the symmetric group $\Symm_n$, it is of particular interest to study \defn{transitive} factorizations, that is, those for which the factors generate a subgroup that acts transitively on $\{1, \ldots, n\}$ (see, e.g., \cite{Hurwitz, GJ97, B-MS} and the survey \cite{GJSurvey}).  The reflections in $\Symm_n$ are the transpositions, and it is easy to see that a transposition factorization is transitive if and only if it is connected, if and only if it is full.

There have been several attempts to generalize the notion of transitivity to reflection groups other than $\Symm_n$.  In \cite{BGJ}, the authors consider the hyperoctahedral group $G(2, 1, n)$ (the Coxeter group of type $B_n$) in its natural permutation action on $E_2 := \{\pm e_1, \ldots, \pm e_n\}$, where $e_i$ is a standard basis vector.  They enumerate transitive reflection factorizations of arbitrary elements under two notions of transitivity: a strong version (that they call \emph{admissibility}), in which the factors act transitively on $E_2$, and a weaker version (that they call \emph{near-admissibility}), in which the factors merely act transitively on the coordinate axes (equivalently, on the pairs $\{\{\pm e_i\} \colon i = 1, \ldots, n\}$).

Both notions of transitivity considered by \cite{BGJ} have been recently studied in the infinite family $G(m, p, n)$ of complex reflection groups.  It is easy to see that a reflection factorization in $G(m, p, n)$ is transitive in the weaker sense (i.e., that the group generated by the factors acts transitively on the coordinate axes in $\CC^n$) if and only if the factorization is connected.  In \cite{PR}, the authors consider connected reflection factorizations of arbitrary elements in $G(m, p, n)$, and give an analogue of the polynomiality property (see \cite[\S 1.1]{GJV-dH} and references therein) of the usual Hurwitz numbers  (famous due to its connection with the ELSV formula \cite{ELSV}).  And in \cite{LM}, the authors enumerate factorizations of a Coxeter element in $G(m, 1, n)$ or $G(m, m, n)$ into arbitrary factors that are transitive in the stronger sense of the action on $E_m := \{\zeta_m^k e_j \colon k = 0, \ldots, m - 1, j = 1, \ldots, n\}$.

The main drawback of results based on these definitions of transitivity is that it is unclear how to generalize them to reflection groups outside the infinite family (see, e.g., \cite[Ques.~8.2]{LM}).  Indeed, the groups $G(m, p, n)$ are precisely the \defn{imprimitive} irreducible complex reflection groups, i.e., the groups $W$ for which there is a decomposition $V = V_1 \oplus \cdots \oplus V_n$ of the vector space on which $W$ acts that is respected by the action of $W$; for other (\emph{primitive}) groups, there is no natural set analogous to $\{1, \ldots, n\}$ for $\Symm_n$.  Thus, one thesis of the present work and its sequels is that fullness is the ``correct'' generalization of transitivity from $\Symm_n$ to reflection groups because it allows one to pose questions uniformly.\footnote{For factorizations whose factors are not necessarily reflections, the notion of fullness would require that there is no \emph{proper} reflection subgroup containing all the factors.}  And, moreover, the main result of \cite{DLM3} will establish that such questions may have uniform answers.
Before we move on to the main result of the present paper, we offer one final point in support of this thesis: it is not only in $\Symm_n$ but also in $G(m,m,n)$ that there is a coincidence between factorizations that are full and those that are transitive (in an appropriate sense).

\begin{proposition}
A set of reflections in $W = G(m, m, n)$ acts transitively on the set $E_m = \{\zeta_m^k e_j \colon k = 0, \ldots, m - 1, j = 1, \ldots, n\}$ (where $e_j$ is a standard basis vector of $\CC^n$ and $\zeta_m$ is the primitive $m$th root of unity) if and only if it generates the full group $W$.
\end{proposition}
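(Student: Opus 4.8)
The plan is to reduce both implications to a single orbit-size computation, feeding off the structural description of connected reflection subgroups quoted just above the statement. First I would record how reflections act on $E_m$ and dispose of the easy direction. Since $W = G(m,m,n)$ has $p=m$, it contains no diagonal reflections, so every $t\in\RRR$ is transposition-like, acting by $t(e_i)=\zeta_m^k e_j$, $t(e_j)=\zeta_m^{-k}e_i$ for some $k$, and fixing the remaining basis vectors. In particular every monomial matrix with $m$th-root entries permutes $E_m$, so $W$ acts on $E_m$. For the implication ``$\langle R\rangle=W\Rightarrow$ transitive'' it then suffices to check that $W$ itself is transitive on $E_m$ (here one must assume $n\ge 2$, since $G(m,m,1)$ is trivial and the statement genuinely fails there for $m>1$): the standard permutation matrices of color $0$ connect all the axes, while for fixed $i\ne j$ the product of the transposition-like reflection of phase $k$ with underlying permutation $(i\,j)$ and the standard transposition $(i\,j)$ equals the diagonal element $\diag(\ldots,\zeta_m^{k},\ldots,\zeta_m^{-k},\ldots)\in W$, which multiplies $e_i$ by an arbitrary power $\zeta_m^k$. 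Composing these moves carries $e_i$ to any $\zeta_m^a e_j$.

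For the converse, let $H=\langle R\rangle\le W$ and suppose $H$ is transitive on $E_m$. I would first observe that transitivity on $E_m$ forces transitivity of the underlying permutations on the axis set $\{1,\ldots,n\}$: some $h\in H$ carries $e_i$ to a scalar multiple of $e_j$, so the underlying permutation of $h$ sends $i$ to $j$. Equivalently, $R$ is a \emph{connected} collection of reflections. The quoted structure result then applies and gives $H\cong G(m',p',n)$ with $m'\mid m$, $p'\mid m'$, and $\tfrac{m'}{p'}\mid \tfrac{m}{m}=1$; the last condition forces $p'=m'$, so $H\cong G(m',m',n)$, and the isomorphism is realized concretely as $H=d\,G(m',m',n)\,d^{-1}$ for the standard copy $G(m',m',n)\subseteq W$ and some diagonal $d\in G(m,1,n)$.

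The heart of the argument — and the step I expect to require the most care — is to pin down the orbit sizes of $H$ on $E_m$. The matrix entries of the standard $G(m',m',n)$ are $m'$th roots of unity, i.e.\ powers of $\zeta_m^{m/m'}$, so applying such a matrix changes the phase of a basis vector only by multiples of $m/m'$; conversely, rerunning the forward-direction computation inside $G(m',m',n)$ shows that every phase in $(m/m')\ZZ/m\ZZ$ and every axis is attained. Hence each orbit of the standard $G(m',m',n)$ on $E_m$ has size exactly $m'\cdot n$ (and $E_m$ splits into $m/m'$ such orbits). Since conjugation by $d\in G(m,1,n)$ is a bijection of $E_m$, it carries orbits to orbits of equal size, so every orbit of $H=d\,G(m',m',n)\,d^{-1}$ on $E_m$ also has size $m'\cdot n$. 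Transitivity of $H$ means a single orbit of size $|E_m|=mn$, which forces $m'=m$. Finally $m'=m$ yields $|H|=|G(m,m,n)|=|W|$, and since $H\le W$ we conclude $H=W$, completing the proof.
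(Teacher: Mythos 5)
Your proof is correct and follows essentially the same route as the paper's: dispose of the disconnected case, invoke the structural result that connected reflection subgroups of $G(m,m,n)$ are diagonal $G(m,1,n)$-conjugates of some $G(m',m',n)$ with $m'\mid m$, and check that $G(m',m',n)$ acts transitively on $E_m$ only when $m'=m$ --- your orbit-size count (orbits of size $m'n$, hence $m/m'$ of them) just makes explicit what the paper states as a final ``observation,'' and your order-counting conclusion $H=W$ replaces the paper's appeal to conjugation preserving fullness. Your flagging of the degenerate case $n=1$ (where $G(m,m,1)$ is trivial, hence full, but not transitive on $E_m$ for $m>1$) is a legitimate corner case that the paper's statement and proof silently pass over.
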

\begin{proof}
Any set of reflections of $W = G(m, m, n)$ generates a reflection subgroup $W'$ of $W$.  If the set is not connected, then $W'$ does not act transitively on the coordinate axes of $\CC^n$, let alone on the set $E_m$.  Therefore we restrict our attention to connected sets of reflections.  It follows from the observations in the last paragraph of Section~\ref{sec:combinatorial family} that every connected subgroup of $G(m, m, n)$ is conjugate by an element of $G(m, 1, n)$ to a subgroup of the form $G(p, p, n)$ for some $p \mid m$.  Conjugation by elements of $G(m, 1, n)$ preserves both the properties of transitivity and fullness, so it is enough to observe that $G(p, p, n)$ acts transitively on $E_m$ if and only if $p = m$.
\end{proof}

Of course, this equivalence does not extend to $G(m, p, n)$ for $p < m$, since each such group contains the proper reflection subgroup $G(m, m, n)$.

\section{A general approach to counting via the Frobenius lemma}
\label{Sec: Frob lemma}

In the previous section we defined, in Equations \eqref{Eq: defn F_W(g;t)} and \eqref{Eq: defn Ftr_W(g;t)}, exponential generating functions $\FFF_W(g;z)$ and $\FFFtr_W(g;z)$ that enumerate factorizations whose factors are taken from the set of reflections $\RRR$ of the complex reflection group $W$. Because $\RRR$ is closed under conjugation, a traditional approach via representation theory, due originally to Frobenius \cite{Frobenius}, allows us to express both series as a finite sum of exponentials in $z$. We start with the case of arbitrary (i.e., not necessarily full) reflection factorizations. Following \cite[(4.3)]{CC}, we have the formula  
\begin{equation}
\label{eq: frobenius no mobius}
\FFF_W(g;z) = \dfrac{1}{\# W} \cdot \sum_{\chi\in\widehat{W}} \chi(1) \cdot \chi(g^{-1})\cdot\exp\left(\dfrac{\chi(\RRR)}{\chi(1)}\cdot z\right),
\end{equation}
where
$\widehat{W}$ denotes the set of irreducible complex characters of $W$ and $\chi(\RRR):=\sum_{t\in\RRR}\chi(t)$. For any reflection group $W$, the normalized character values $\chi(\RRR)/\chi(1)$ are integers (see \cite[Def.~3.14 and Prop.~3.15]{chapuy_theo}) and thus $\FFF_W(g;z)$ can be written as a Laurent polynomial in the variable $X:=e^z$. 

\subsection{Full factorizations after an inclusion-exclusion argument}

Any factorization $(t_1, \ldots, t_N)$ is full for the subgroup $W':=\langle t_1,\dots,t_N\rangle$ generated by its
elements.  Therefore, we may view the series $\FFF_W(g;z)$ in \eqref{Eq: defn F_W(g;t)} as the sum, over all reflection subgroups $W'$ of $W$ that contain $g$, of the generating functions $\FFFtr_{W'}(g;z)$ of full factorizations of $g$ with respect to $W'$. By applying the principle of inclusion-exclusion, we conclude that
\begin{equation} 
\label{eq: mobius transitive}
\FFFtr_W(g;z)=\sum_{g\in W'\leq W}\mu(W,W')\cdot\FFF_{W'}(g;z),
\end{equation} 
where the M\"obius function is computed in the poset of all reflection subgroups $W'$ of $W$, ordered by reverse inclusion. This is analogous to the usual construction \cite[\S5.2]{EC2} of taking the logarithm of an exponential generating function to count ``connected components", which was also what led Goulden and Jackson to rediscover the Hurwitz formula of Theorem~\ref{thm:S_n genus 0}, see \cite[\S4]{GJ97}. Indeed, in the case of the symmetric group $\Symm_n$, the poset of reflection subgroups $W'\leq W$ is isomorphic to the set partition lattice of $\{1, \ldots, n\}$, and M\"obius inversion corresponds precisely to taking the logarithm of generating functions.

The irreducible characters for complex reflection groups $W$ can be constructed via the computer software SageMath and CHEVIE \cite{sagemath, chevie}, which realize reflection groups
via their permutation action on roots. This allows for explicit calculation of the functions $\FFF_W(g;z)$ and $\FFFtr_W(g;z)$ for the exceptional groups $G_{4}$ to $G_{36}$.  For the interested reader, the series for the case $g = \id$ and all exceptional groups may be found in Appendix~\ref{appendix}; they are also attached to this arXiv submission as an auxiliary file.  (For the reflection group $G_{37}=E_8$, the lattice of reflection subgroups and its M\"obius function are very complicated, so computation of $\FFFtr_{E_8}(g; z)$ for arbitrary $g$ requires significant computational resources.) 

\begin{example}
\label{ex:cyclic}
For example, consider the case of an arbitrary element $g$ in the cyclic group $W = G(m, 1, 1)$ with $m > 1$, in which all non-identity elements are reflections.  It is easy to show (either by representation theory or by an elementary inductive argument) that
\[
\FFF_{W}(g; \log X) = \begin{cases}
    \dfrac{X^m - 1}{mX}, & g \neq \id \\[12pt]
    \dfrac{X^m + (m - 1)}{mX}, & g = \id.
    \end{cases}
\]

The subgroups of $W$ are the cyclic groups $G(m, r, 1) \cong \ZZ/(m/r)\ZZ$ for $r \mid m$, and their poset of inclusions is isomorphic to the poset of positive integer divisors of $m$.  Furthermore, an element $g$ of $G(m, 1, 1)$ belongs to $G(m, r, 1)$ if and only if its order $\op{ord}(g)$ divides $r$.  Therefore, doing the M\"obius inversion over all subgroups, we have (after some simplification) that

\[
\FFFtr_W(g; \log X) = \frac{1}{X} \sum_{\substack{r \colon r \mid m \\ \text{and} \op{ord}(g) \mid r}} \mu(m/r) \frac{X^r - 1}{r} = \frac{X - 1}{X} \sum_{\substack{r \colon r \mid m \\ \text{and} \op{ord}(g) \mid r}} \mu(m/r) \frac{[r]_X}{r}
\]
for all $g$ in $W$, where $[r]_X := 1 + X + \ldots + X^{r - 1}$.
\end{example}

Another example, in the case of the real reflection group $H_3 = G_{23}$, is given below as Example~\ref{Example: FFFtr(H3)}.

\begin{remark}
\label{Rem: length via Frobenius}
The same representation-theoretic approach can easily be used to obtain the reflection length and full reflection length of any element $g\in W$. Indeed, as mentioned below Equations \eqref{Eq: defn F_W(g;t)} and \eqref{Eq: defn Ftr_W(g;t)}, one need only read off the lowest-order terms of the generating functions $\FFF_W(g;z)$ and $\FFFtr_W(g;z)$, respectively. \end{remark}

\begin{remark} \label{rem: complexity}
The two formulas \eqref{eq: frobenius no mobius} and \eqref{eq: mobius transitive} imply that the generating functions $\FFFtr_W(g;z)$ are finite sums of exponentials $e^{kz}$. In fact, there are much stronger constraints on their structure. As we will see in Proposition~\ref{Prop: structural result FFFtr} in the next section, the numbers $k$ appearing in the exponents are integers and belong to an interval of length at most $h\cdot n$, where $h$ is the Coxeter number and $n$ the rank of the irreducible group $W$. 

We warn the reader that several relevant results in factorization enumeration (e.g., \cite{GJ97,GJ99,GJV-dH}) concern generating functions that vary the index $n$ of the symmetric group $\Symm_n$ but fix the genus, whereas we fix the group but vary the genus (equivalently, the number of factors). These generating functions, though similar to ours at first glance, have much more complicated structure and asymptotics. 
\end{remark}

\subsection{Additional structure for the generating functions}

We give now some further properties of the functions $\FFFtr_W(g;z)$ that are not necessary in what follows, but that might shed some light on the structural characteristics of our main theorems. 

\begin{proposition}\label{Prop: structural result FFFtr}
Let $W$ be any complex reflection group and $g$ any element of $W$.
The exponential generating function $\FFFtr_W(g;z)$ of full reflection factorizations of $g$ can always be expressed in terms of the variable $X:=e^z$ as a Laurent polynomial of the form
\[
\FFFtr_W(g; \log X)=\frac{1}{\#W} \cdot \Phi_{W}(g;X) \cdot (X-1)^{\ltr(g)} \cdot \frac{1}{X^{\#\mathcal{A}_W}},
\]
with $\Phi_{W}(g;X)$ a monic polynomial in $X$ of degree $h\cdot n-\ltr(g)$ and $\mathcal{A}_W$ the reflection arrangement of $W$.
\end{proposition}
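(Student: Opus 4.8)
The plan is to feed the Frobenius formula \eqref{eq: frobenius no mobius} into the inclusion--exclusion formula \eqref{eq: mobius transitive} and then extract three invariants of the resulting Laurent polynomial: the range of its exponents, its leading coefficient, and its order of vanishing at $X = 1$. Every reflection subgroup $W' \le W$ is again a complex reflection group, so \eqref{eq: frobenius no mobius} writes each $\FFF_{W'}(g;\log X)$ as a Laurent polynomial in $X$ whose exponents are the integers $\chi(\RRR_{W'})/\chi(1)$ (integrality was recalled just after \eqref{eq: frobenius no mobius}); by \eqref{eq: mobius transitive} the same then holds for $\FFFtr_W(g;\log X)$. It thus remains to locate the extreme exponents and read off the two boundary factors $(X-1)^{\ltr(g)}$ and $X^{-\#\mathcal{A}_W}$. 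I will take $W$ irreducible throughout, as the symbols $h$ and $n$ require; this is the case addressed in Remark~\ref{rem: complexity}.

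First I would bound the normalized character values $\chi(\RRR)/\chi(1)$ uniformly from both sides. The upper bound is immediate: $|\chi(t)| \le \chi(1)$ for every reflection $t$ gives $\chi(\RRR)/\chi(1) \le \#\RRR$, and equality forces $\chi(t) = \chi(1)$ for all $t$, hence $\chi$ trivial since $\RRR$ generates $W$. For the lower bound I would group reflections by the hyperplane they fix: the reflections fixing a given hyperplane $H$, together with the identity, form a cyclic group $C_H$, and the class-function identity $\sum_{t \in C_H}\chi(t) = \#C_H \cdot \langle \chi|_{C_H}, \mathbf{1}\rangle \ge 0$ gives $\sum_{t \in C_H,\, t \ne \id}\chi(t) \ge -\chi(1)$; summing over the $\#\mathcal{A}_W$ hyperplanes yields $\chi(\RRR)/\chi(1) \ge -\#\mathcal{A}_W$. (Equality holds for the determinant character, since $\sum_{j=1}^{\#C_H - 1}\z_{\#C_H}^{\,j} = -1$, but only the inequality is needed here.) Because $\mathcal{A}_{W'} \subseteq \mathcal{A}_W$ and $\RRR_{W'}\subseteq\RRR$ for every subgroup occurring in \eqref{eq: mobius transitive}, all exponents of $\FFFtr_W(g;\log X)$ lie in $[-\#\mathcal{A}_W,\,\#\RRR]$, so $P(X) := X^{\#\mathcal{A}_W}\cdot\FFFtr_W(g;\log X)$ is a genuine polynomial.

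Next I would compute the leading term of $P$. A proper reflection subgroup $W'$ has strictly fewer reflections than $W$ (its reflections generate $W'\ne W$), so among the summands of \eqref{eq: mobius transitive} only $W' = W$ contributes in top degree $\#\RRR$, where the trivial-character term of \eqref{eq: frobenius no mobius} gives coefficient $\mu(W,W)\cdot\frac{1}{\#W} = \frac{1}{\#W}$. Hence $P$ has degree exactly $\#\RRR + \#\mathcal{A}_W$ with leading coefficient $\frac{1}{\#W}$, and $\#\RRR + \#\mathcal{A}_W = h\cdot n$ by the standard identity characterizing the Coxeter number of an irreducible complex reflection group. Finally, since $X = e^z$ and $e^z - 1$ has a simple zero at $z = 0$, the order of vanishing of the Laurent polynomial $\FFFtr_W(g;\log X)$ at $X = 1$ equals that of $\FFFtr_W(g;z)$ at $z = 0$, which is exactly $\ltr(g)$ because the lowest term of the series is $\Ftr_W(g)\,z^{\ltr(g)}/\ltr(g)!$ with $\Ftr_W(g)\neq 0$. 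Therefore $(X-1)^{\ltr(g)}$ divides $P$ with quotient non-vanishing at $X=1$, and $\Phi_W(g;X) := \#W\cdot P(X)/(X-1)^{\ltr(g)}$ is a polynomial of degree $h\cdot n - \ltr(g)$, monic because its leading coefficient is $\#W\cdot\frac{1}{\#W} = 1$, with $\Phi_W(g;1)\neq 0$; this is exactly the claimed factorization.

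The \emph{main obstacle} is the lower exponent bound $-\#\mathcal{A}_W$: the grouping of reflections by hyperplane and the positivity $\sum_{t\in C_H}\chi(t)\ge 0$ are the only genuinely representation-theoretic input, the remainder being bookkeeping with \eqref{eq: frobenius no mobius} and \eqref{eq: mobius transitive}. A secondary point that needs care is invoking the identity $\#\RRR + \#\mathcal{A}_W = h\cdot n$ in exactly the generality in which $h$ is defined, and, if one wants the statement verbatim for reducible $W$, interpreting $h$ and $n$ and handling the product decomposition.
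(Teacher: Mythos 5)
Your proposal is correct and follows the same skeleton as the paper's proof: express $\FFFtr_W(g;\log X)$ as a Laurent polynomial by combining \eqref{eq: frobenius no mobius} with \eqref{eq: mobius transitive}, locate its extreme exponents via two-sided bounds on the normalized traces $\chi(\RRR)/\chi(1)$, identify the leading coefficient with the trivial-character term, and read off the exponent of $(X-1)$ from the order of vanishing of the series in $z$. The one genuine difference is that where the paper outsources the two key representation-theoretic inputs to citations --- \cite[Prop.~3.2]{D2} for the bounds $-\#\mathcal{A}_W \le \chi(\RRR)/\chi(1) \le \#\RRR$, and the proof of \cite[Thm.~3.6]{D2} for the fact that only the trivial character attains the upper bound --- you prove both from scratch: the upper bound and its equality case from $|\chi(t)|\le\chi(1)$ together with the fact that $\RRR$ generates $W$, and the lower bound by partitioning $\RRR$ into the nonidentity elements of the cyclic pointwise stabilizers $C_H$ of the hyperplanes and using nonnegativity of the multiplicity $\langle \chi|_{C_H}, \mathbf{1}\rangle$. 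This makes your argument self-contained at the cost of some length, and the hyperplane-grouping trick is exactly the right substitute for the citation. Two further points you make explicit that the paper leaves implicit are also worth keeping: the observation that proper reflection subgroups cannot contribute in top degree of the M\"obius sum (since $\RRR_{W'}\subsetneq\RRR$ when $W'\neq W$), which is what makes the leading coefficient exactly $1/\#W$; and the caveat that the identity $\#\RRR+\#\mathcal{A}_W=h\cdot n$ requires $W$ irreducible for $h$ and $n$ to be meaningful.
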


\begin{proof}
As mentioned in the first paragraph of Section~\ref{Sec: Frob lemma}, the generating function $\FFFtr_W(g;z)$ is always a Laurent polynomial in terms of the variable $X:=e^z$. Since $e^z = 1 + z + \ldots$, the multiplicity of the root $X=1$ in this polynomial agrees with the degree $\ltr(g)$ of the lowest-order term in the $z$-expansion of the series. The highest and lowest degree of this Laurent polynomial are determined by the inequalities of \cite[Prop.~3.2]{D2} regarding the normalized traces $\chi(\mathcal{R})/\chi(1)$; it is monic because only the trivial representation $\chi_{\op{triv}}$ has normalized trace equal to $|\mathcal{R}|$ (as in the proof of \cite[Thm.~3.6]{D2}). 
\end{proof}

\begin{example}[Counting full reflection factorizations of the identity element in $H_3$]\label{Example: FFFtr(H3)}

We give here in some detail a snapshot of the calculation of the generating function $\FFFtr_{H_3}(\id;z)$ counting full factorizations of the identity element in the real reflection group $H_3 = G_{23}$. M\"obius inversion on the poset of reflection subgroups of a reflection group $W$ can be computed recursively, as follows: assume that we have calculated in all reflection subgroups $W'\lneq W$ the corresponding generating functions $\FFFtr_{W'}(g;z)$ of reflection factorizations of $g$ that are full in $W'$; then $\FFFtr_W(g; z)$ is the result of subtracting all of these from the generating function $\FFF_W(g;z)$ of \emph{not necessarily full} reflection factorizations in $W$.

In Table~\ref{Table: example H_3} below we see this information for the reflection group $H_3$. Up to conjugacy, it has six classes of proper reflection subgroups (see also \cite[\S6]{DPR}), including the trivial subgroup $\{\id\}$. The first column records the Coxeter type of the classes, the second column records the number of different subgroups in each class, and the last column displays the already-calculated generating functions. Notice that we have given them in terms of the variable $X:=e^z$ so that one can immediately see the structural properties of Proposition~\ref{Prop: structural result FFFtr}. In particular, the exponent of the factor $X-1$ is the full reflection length of the identity in each subgroup $W'$.

\begin{table}[H]
\renewcommand{\arraystretch}{1.5}
\[
\begin{array}{|l|c|c|}
\hline
\text{Type of }W' & \# \big[W'\big] & \FFFtr_{W'}(\id; \log X) \rule{0em}{15pt}\\[3pt]
\hline\hline
\{\id\} & 1 & 1 \rule{0em}{15pt}\\[3pt]
\hline 
A_1 & 15 & \dfrac{1}{2}\cdot (X - 1)^2\cdot\dfrac{1}{X}  \rule{0em}{18pt}\\[6pt]
\hline
A_1^2 & 15 & \dfrac{1}{4}\cdot (X - 1)^4\cdot\dfrac{1}{X^2} \rule{0em}{18pt}\\[6pt]
\hline
A_2 & 10 & \dfrac{1}{6}\cdot (X^2 + 4X + 1)\cdot (X - 1)^4\cdot\dfrac{1}{X^3} \rule{0em}{18pt}\\[6pt]
\hline
I_2(5) & 6 & \dfrac{1}{10}\cdot (X^6 + 4X^5 + 10X^4 + 20X^3 + 10X^2 + 4X + 1)\cdot (X - 1)^4\cdot\dfrac{1}{X^5} \rule{0em}{18pt}\\[6pt]
\hline 
A_1^3 & 5 & \dfrac{1}{8}\cdot (X-1)^6\cdot \dfrac{1}{X^3} \rule{0em}{18pt}\\[6pt]
\hline 
\end{array}
\]
\caption{The generating functions $\FFFtr_{W'}(\id; z)$ for all proper reflection subgroups $W'$ of $H_3$, expressed as Laurent polynomials in $X = e^z$.}
\label{Table: example H_3}
\renewcommand{\arraystretch}{1}
\end{table} 

Now, a direct computer calculation as in \eqref{eq: frobenius no mobius} gives that the generating function for not-necessarily-full reflection factorizations of the identity is
\[
\FFF_{H_3}(\id; \log X)=\dfrac{1}{120}\cdot (X^{30} + 18X^{20} + 25X^{18} + 32X^{15} + 25X^{12} + 18X^{10} + 1)\cdot \dfrac{1}{X^{15}}.
\]
We can finally write
\begin{alignat*}{1}
\FFFtr_{H_3}(\id; \log X) & =\FFF_{H_3}(\id; \log X)-\sum_{W'\lneq W}\FFFtr_{W'}(\id; \log X) \\
    &=\dfrac{1}{120}\cdot \Big(X^{24} + 6X^{23} + 21X^{22} + 56X^{21} + 126X^{20} + 252X^{19}  + 462X^{18} \\
    & \quad + 792X^{17} + 1287X^{16}  + 2002X^{15} + 2949X^{14} + 4044X^{13} \\
    & \quad + 4804X^{12} + 4044X^{11} + 2949X^{10} + 2002X^9 + 1287X^8 + 792X^7 \\ 
    & \quad + 462X^6 + 252X^5 + 126X^4 + 56X^3 + 21X^2 + 6X + 1\Big)\cdot(X - 1)^6\cdot \dfrac{1}{X^{15}}.
\end{alignat*}
\end{example}

\begin{remark} \label{rem: positive unimodal Phi}
The polynomials $\Phi_{W}(g;X)$ that appear in Proposition~\ref{Prop: structural result FFFtr} are very interesting: in real reflection groups, it is easy to show that they are palindromic, but furthermore it seems  (based on empirical data) that they have positive integer coefficients; in most cases (but not always, see $G_2=I_2(6)=G(6,6,2)$) their coefficients are unimodal, and their roots have an interesting structure (see Figure~\ref{fig:roots} and Appendix~\ref{appendix}). Neither palindromicity nor positivity hold for all complex types; for instance, in the cyclic group $G(6,1,1)\cong \ZZ/6\ZZ$ we have
\[
\Phi_{G(6,1,1)}(\id;X)=X^4 + 2X^3 + 3X^2 + 2X - 2.
\]

\end{remark}

\begin{figure}
    \centering
    \includegraphics[scale=0.5]{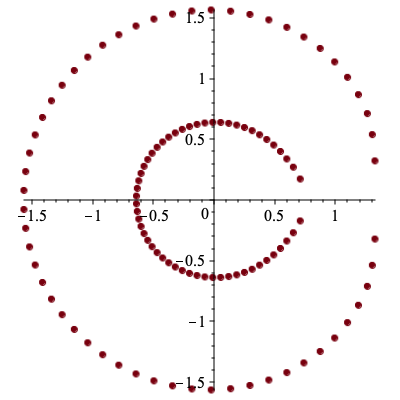}
    \quad
    \includegraphics[scale=0.5]{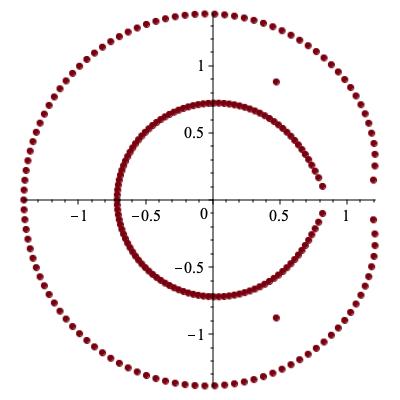}
    \caption{Roots of the polynomials $\Phi_{E_7}(\id; X)$ and $\Phi_{E_8}(\id; X)$. The roots come in pairs $(a,1/a)$ because the polynomials are palindromic, and this explains the two ``components" in the figures, but we cannot a priori justify the rest of the structure (i.e., the orderly arrangement of the roots).} 
    \label{fig:roots}
\end{figure}

\begin{remark} \label{rem: case identity}
While the machinery of the Frobenius lemma gives a powerful tool to study factorizations in any particular group, the answers it produces for an infinite family of groups may not be usable.  For example, in the symmetric group, one can produce explicit formulas for the number of genus-$0$ and genus-$1$ factorizations of an arbitrary element, but such formulas are unknown for the whole generating series $\FFFtr_{\Symm_n}(g; z)$.  However, there is one case where some structure is indeed known: in the case of the identity element, Dubrovin--Yang--Zagier \cite{DYZ} proved the following recursion (which, as usual, we phrase in terms of the parameter $X=e^z$):
\begin{multline} \label{eq:Dubrovin--Yang--Zagier}
n^2(n-1)\cdot \FFFtr_{\Symm_n}(\id;\log X) = \\ \sum_{k=1}^{n-1}k(n-k)^2\cdot\binom{n}{k}\cdot(X^k-2+X^{-k})\cdot\FFFtr_{\Symm_k}(\id; \log X)\cdot\FFFtr_{\Symm_{n-k}}(\id; \log X).
\end{multline}
This allows one to compute the function $\FFFtr_{\Symm_n}(\id; z)$ efficiently.
\end{remark}

\begin{remark}
From the recurrence \eqref{eq:Dubrovin--Yang--Zagier}, the polynomials $\Phi_{\Symm_n}(\id, X)$
 have integer coefficients and are palindromic. This forces their roots to come in pairs $(a, 1/a)$ and $(a,\overline{a})$ and thus the plots of the roots have two components corresponding to the $(a,1/a)$ pairs. Figure~\ref{fig:limit} shows plots of the roots  of $\Phi_{\Symm_{2n}}(\id, X)$ for $n=5,\ldots,25$. Are these points converging to a limit shape as $n\to \infty$ (for example, to the unit circle)? 
\end{remark}

\begin{figure}
    \centering
    \includegraphics[scale=0.4]{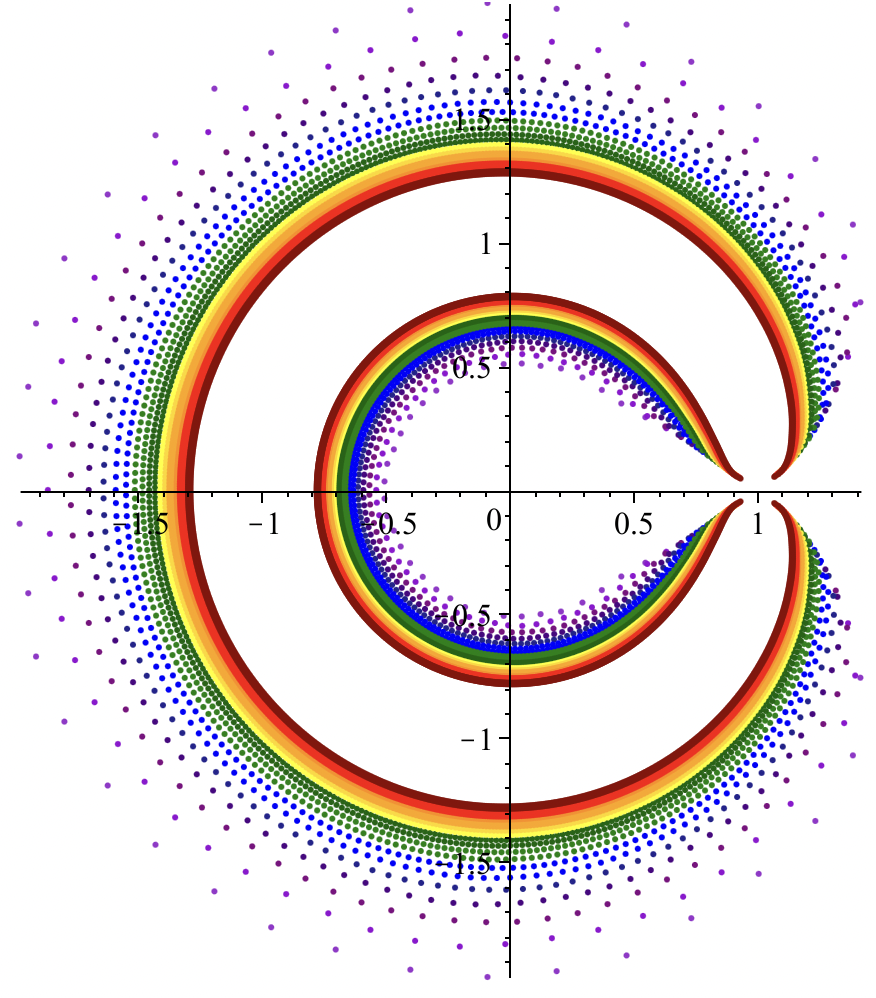}
    \caption{Plot of the roots of $\Phi_{\Symm_{2n}}(\id, X)$ for $n=\textcolor{violet}{5}\textcolor{purple}{,} \, \textcolor{blue}{.}\,\textcolor{green}{.}\,\textcolor{yellow}{.}\,\textcolor{orange}{,} \, \textcolor{red}{25}$. (We skip the roots for odd $n$ to make the plot clearer.)}
    \label{fig:limit}
\end{figure}

\section{Counting full factorizations in the combinatorial family} 
\label{Sec: counting full fancs}
\label{Sec: gen fncs combinatorially}

In this section, we give a formula for the generating function that counts full reflection factorizations of arbitrary length (not just minimum length) of an element $g$ in a group $G(m, p, n)$ from the infinite family.  Our formula is valid for arbitrary $g$ and expresses the result in terms of similar generating functions in the symmetric group $\Symm_n$ and in the cyclic group $G(m, p, 1) \cong \ZZ/(m/p)\ZZ$. To avoid the cumbersome notation $\FFFtr_{G(m,p,n)}(g;z)$ we write the generating functions for the infinite family simply as $\FFFtr_{m,p,n}(g;z)$.

\mainthm
(Since $p \mid m$ and any two integer representatives of $a_i \in \ZZ/m\ZZ$ differ by a multiple of $m$, the number $d$ is well defined as an integer, and is always a divisor of $p$.)

\begin{remark}
\label{rem: main theorem special cases}
In the case that $p = 1$, then always $d = 1$ and so the summation simplifies to a single term.
In the case that $p = m$, the group $G(m, m, 1)$ is the trivial group, which is still a reflection group but with an empty set of reflections $\RRR=\emptyset$; then by construction (see \eqref{eq: frobenius no mobius} and \eqref{eq: mobius transitive}) we will have that $\FFFtr_{m, m, 1}(\id;z)=1$.
\end{remark}

Our approach is broadly similar to that of \cite{LM} and Polak--Ross \cite{PR}; we use the maps $\pi$ to project factorizations from $G(m, p, n)$ to a simpler subgroup (either $G(p, p, n)$ or $\Symm_n$) and then compute the size of each fiber of the projection.  Our main tool for counting preimages (Lemma~\ref{tree lemma})  is adapted directly from \cite{PR}; however, because we consider full (rather than connected) factorizations, we require an additional inclusion-exclusion argument over a certain subgroup lattice.

The structure of the proof is as follows: first, we characterize generating sets of reflections in $G(m, p, n)$ in terms of their projections into $G(m, p, 1)$ and into $G(p, p, n)$.  Second, we use this characterization to express $\FFFtr_{m, p, n}(g; z)$ in terms of related series in the cyclic group $G(m, p, 1)$ and the color-$0$ subgroup $G(p, p, n)$.  Finally, we show how to express $\FFFtr_{p, p, n}(g; z)$ in terms of related series in the symmetric group $\Symm_n$.

\subsection{Characterization of generating sets}

We begin by giving a characterization for generating sets of reflections in $G(m, p, n)$ in terms of their projections.

\begin{lemma} 
\label{generating sets lemma}
A set $S$ of reflections in $G(m, p, n)$ generates the whole group if and only if $\wt(S)$ generates $p\ZZ / m \ZZ$ and $\pi_{m/p}(S)$ generates $G(p, p, n)$. 
\end{lemma}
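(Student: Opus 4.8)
The forward implication is immediate: since $\wt\colon G(m,p,n)\to p\ZZ/m\ZZ$ and $\pi_{m/p}\colon G(m,p,n)\to G(p,p,n)$ are surjective homomorphisms, they carry any generating set of $G(m,p,n)$ to generating sets of their targets. For the converse I would set $H:=\langle S\rangle$; the goal is to prove $H=G(m,p,n)$, and the plan is to first identify the isomorphism type of $H$ and then force its two parameters to equal $m$ and $p$.

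First I would observe that $S$ is connected: because $\pi_{m/p}(H)=\langle\pi_{m/p}(S)\rangle=G(p,p,n)$ and the underlying-permutation map $G(p,p,n)\to\Symm_n$ is onto, the underlying permutations of the transposition-like reflections in $S$ generate a transitive subgroup, which is equivalent to connectivity of the graph of $S$. The classification of connected reflection subgroups quoted at the end of Section~\ref{sec:combinatorial family} then supplies a diagonal element $\delta\in G(m,1,n)$ with $\delta^{-1}H\delta=G(m',p',n)$ for some $m'\mid m$, $p'\mid m'$, $\tfrac{m'}{p'}\mid\tfrac mp$. Crucially, both hypotheses survive this conjugation: $\wt$ is invariant under conjugation (its target is abelian), and $\pi_{m/p}(\delta^{-1}H\delta)=\pi_{m/p}(\delta)^{-1}\,G(p,p,n)\,\pi_{m/p}(\delta)=G(p,p,n)$ since $G(p,p,n)\trianglelefteq G(p,1,n)$. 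Hence I may assume outright that $H=G(m',p',n)$.

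It then remains to extract two numerical constraints. Writing $a:=m/m'$, a direct computation of the color homomorphism gives $\wt\big(G(m',p',n)\big)=ap'\,\ZZ/m\ZZ$, so the hypothesis that $\wt(S)$ generates $p\ZZ/m\ZZ$, together with the fact that $ap'\mid m$, forces $ap'=p$. Since $\pi_{m/p}$ acts on colors as reduction modulo $p$, the individual entry-colors occurring in $\pi_{m/p}(G(m',p',n))$ are the multiples of $a$ taken mod $p$, i.e.\ $\gcd(a,p)\,\ZZ/p\ZZ$; as $G(p,p,n)$ already exhibits every color in $\ZZ/p\ZZ$ among its transposition-like reflections, the hypothesis that $\pi_{m/p}(S)$ generates $G(p,p,n)$ forces $\gcd(a,p)=1$. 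A divisor $a$ of $p$ that is coprime to $p$ must be $1$, whence $m'=m$, $p'=p$, and $H=G(m,p,n)$, as desired.

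The only substantial external input is the classification of connected reflection subgroups, which I am free to cite; granting that, the argument is bookkeeping. The step I expect to require the most care is the computation of the images $\wt(G(m',p',n))$ and $\pi_{m/p}(G(m',p',n))$ and the translation between colors modulo $m$ and modulo $p$ --- and it is precisely here that fullness (rather than mere connectivity) enters, since connectivity alone only yields $H\cong G(m',p',n)$, while it is the two projection hypotheses that pin down $a=1$ and $p'=p$.
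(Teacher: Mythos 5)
Your argument is correct in the main case $n \geq 2$, and it takes a genuinely different route from the paper's. The paper never invokes the classification of connected reflection subgroups in this proof: it argues directly that every $g \in G(m,p,n)$ lies in $\langle S \rangle$. Concretely, the hypothesis on $\pi_{m/p}(S)$ produces a word $s_1 \cdots s_k$ in $S$ with $g (s_1\cdots s_k)^{-1} \in \ker(\pi_{m/p})$, and this kernel element is then absorbed as follows: connectivity of the transposition-like part $S_{\mathrm{tr}}$ makes all underlying permutations available, conjugation moves the diagonal reflections of $S$ into arbitrary positions, and---since transposition-like reflections have color $0$, so the colors of $S_{\mathrm{diag}}$ alone must generate $p\ZZ/m\ZZ$---these conjugates generate a diagonal subgroup containing $\ker(\pi_{m/p})$. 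You replace all of this with the classification quoted at the end of Section~\ref{sec:combinatorial family}, reducing the lemma to arithmetic on the parameters $(m',p')$. Your conjugation-invariance checks are exactly right (both hypotheses and the target conclusion are stable because $\wt$ has abelian target, while $G(p,p,n) \trianglelefteq G(p,1,n)$ and $G(m,p,n)\trianglelefteq G(m,1,n)$), and the two computations $\wt\big(G(m',p',n)\big) = ap'\,\ZZ/m\ZZ$ and $\gcd(a,p)\,\ZZ/p\ZZ$ for the entry-colors are correct. What you gain is brevity and a clean conceptual picture (identify the subgroup, then pin down its parameters); what the paper's route buys is self-containedness---it needs only elementary manipulations, not the classification---and uniformity in $n$.

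The one place your argument is not airtight is the degenerate case $n = 1$: the step forcing $\gcd(a,p)=1$ appeals to transposition-like reflections of $G(p,p,n)$ realizing every color in $\ZZ/p\ZZ$, and these exist only for $n \geq 2$. The failure there is real, not cosmetic: inside $G(4,2,1) = \{\pm 1\}$, the subgroup $H = \{\pm 1\}$ can be presented via the classification as $G(2,1,1)$, giving $a = 2$, $p' = 1$; both hypotheses hold, yet $\gcd(a,p) = 2$. The lemma survives only because $G(2,1,1)$ and $G(4,2,1)$ coincide as literal subgroups, i.e., the classification's parameters are not unique in this degenerate situation. Since for $n=1$ the map $\wt$ is injective on $G(m,p,1)$, so that $\wt(\langle S\rangle) = p\ZZ/m\ZZ$ already yields $\langle S \rangle = G(m,p,1)$, this is a one-sentence patch---but it should be stated.
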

\begin{proof}
If $S$ generates $G(m, p, n)$ then $\pi_{m/p}(S)$ generates $\pi_{m/p}(G(m, p, n)) = G(p, p, n)$ and $\wt(S)$ generates $\wt(G(m, p, n)) = p\ZZ/m\ZZ$ because both projections are surjective.

Conversely, suppose $S$ is a set of reflections in $G(m, p, n)$ such that $\wt(S)$ generates $p\ZZ / m \ZZ$ and $\pi_{m/p}(S)$ generates $G(p, p, n)$.  Write $S$ as a disjoint union $S = S_{\mathrm{tr}} \cup S_{\mathrm{diag}}$, where $S_{\mathrm{tr}}$ consists of transposition-like reflections and $S_{\mathrm{diag}}$ consists of diagonal reflections.
Consider any element $w = [u; (a_1, \ldots, a_n)]$ in $G(m, p, n)$ (so $p \mid a_1 + \ldots + a_n$).  By definition, we have $\pi_{m/p}(g) = [u; (m/p \cdot a_1, \ldots, m/p \cdot a_n)] \in G(p, p, n)$.  By hypothesis, we can write
\[
\pi_{m/p}(g) = \pi_{m/p}(s_1) \cdots \pi_{m/p}(s_k)
\]
for some reflections $s_1, \ldots, s_k$ in $S$.  Then 
\[
g \cdot (s_1 \cdots s_k)^{-1} = [ \id; (b_1, \ldots, b_k)]
\]
belongs to the kernel of $\pi_{m/p}$, that is, $p$ divides $b_i$ for $i = 1, \ldots, n$.  To finish, it suffices to show that this element $ [ \id; (b_1, \ldots, b_k)]$ belongs to the subgroup $\langle S \rangle$ generated by $S$.
Since $\pi_{m/p}(S)$ generates $G(p, p, n) \supseteq \Symm_n$, it must be that $S_{\mathrm{tr}}$ is connected, and so products of elements of $S_{\mathrm{tr}}$ yield arbitrary underlying permutations.  Conjugating a diagonal reflection by such elements produces the diagonal reflections with the same color and arbitrary position for the nontrivial entry, and hence the subgroup
\[
\Big\langle \; 
\langle S\rangle\text{-conjugates of } S_{\mathrm{diag}}
\; \Big\rangle
\]
generated by $S_{\mathrm{diag}}$ and all of its conjugates by products of elements of $S$ is equal to the diagonal subgroup
\[
\Big\{ \; 
[\id; (a_1, \ldots, a_n)] \colon p \textrm{ divides } a_1 + \ldots + a_n 
\;\Big\}.
\]
This subgroup manifestly contains $\ker(\pi_{m/p})$.  Thus $g \in \langle S \rangle$.  Since $g$ was arbitrary, $\langle S \rangle = G(m, p, n)$, as claimed.
\end{proof}

\begin{lemma}
\label{tree lemma}
Let $(t_1, \ldots, t_\ell)$ be a sequence of permutations in $\Symm_n$ with the property that its subsequence of transpositions is connected, and let $g = t_1 \cdots t_\ell$.  There exists a subsequence $(t_{i_1}, \ldots, t_{i_{n - 1}})$ of $n - 1$ transpositions with the following properties: for every choice of $\{\widetilde{t}_j \colon j \neq i_1, \ldots, i_{n - 1}\}$ and $\widetilde{g}\in G(m, 1, n)$ such that 
\begin{itemize}
\item $\pi_{m/1}(\widetilde{t}_j) = t_j$ for $j \neq i_1, \ldots, i_{n - 1}$,
\item $\pi_{m/1}(\widetilde{g}) = g$, and
\item $\displaystyle \wt(\widetilde{g}) = \sum_{j \neq i_1, \ldots, i_{n - 1}} \wt(\widetilde{t}_j)$,
\end{itemize}
there exists a unique choice of reflections $\widetilde{t}_{i_1}, \ldots, \widetilde{t}_{i_{n - 1}}$ in $G(m, 1, n)$ such that $\pi_{m/1}(\widetilde{t_{i_j}}) = t_{i_j}$ and $\widetilde{g} = \widetilde{t}_{1} \cdots \widetilde{t}_{\ell}$.
\end{lemma}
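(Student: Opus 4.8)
The plan is to strip away the underlying permutations and turn the statement into a linear system over $\ZZ/m\ZZ$ governed by the combinatorics of spanning trees. Since $\pi_{m/1}$ is a homomorphism and the conditions $\pi_{m/1}(\widetilde t_j)=t_j$ are imposed, the underlying permutation of any candidate product $\widetilde t_1\cdots\widetilde t_\ell$ is automatically $t_1\cdots t_\ell=g$, matching that of $\widetilde g$; hence $\widetilde g=\widetilde t_1\cdots\widetilde t_\ell$ is really a condition on colors only. Writing $\widetilde t_j=[t_j;a^{(j)}]$ and iterating \eqref{eq:wreath product}, the color of the product equals $\sum_{j=1}^{\ell}R_j\big(a^{(j)}\big)$, where $R_j:=t_{j+1}\cdots t_\ell$ is the suffix product of underlying permutations (acting on color vectors by permuting coordinates). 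A transposition-like lift of $t_{i_r}=(p_r\,q_r)$ is determined by a single parameter $k_r\in\ZZ/m\ZZ$, has color vector $k_r(e_{p_r}-e_{q_r})$, and so contributes $k_r v_r$ to the product color, where $v_r:=R_{i_r}(e_{p_r}-e_{q_r})=e_{x_r}-e_{y_r}$ is again a difference of standard basis vectors, supported on the twisted edge $\{x_r,y_r\}:=R_{i_r}^{-1}\{p_r,q_r\}$. Absorbing the fixed contributions of the remaining factors into a vector $B$, the lemma becomes the assertion that the system
\[
\sum_{r=1}^{n-1}k_r\,v_r=B
\]
has a unique solution $(k_1,\dots,k_{n-1})\in(\ZZ/m\ZZ)^{n-1}$.

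First I would check that $B$ lies in the rank-$(n-1)$ sum-zero submodule $L:=\{v:\sum_i v_i=0\}\subseteq(\ZZ/m\ZZ)^n$: each $R_j$ preserves coordinate sums and the transposition-like lifts have color $0$, so the coordinate sum of $B$ equals $\wt(\widetilde g)-\sum_{j\neq i_1,\dots,i_{n-1}}\wt(\widetilde t_j)$, which vanishes by the third hypothesis. Since the $v_r$ are edge vectors, the system has a unique solution for every $B\in L$ precisely when the twisted edges $\{x_r,y_r\}$ form a spanning tree on $\{1,\dots,n\}$: the edge vectors of a spanning tree are a $\ZZ$-basis of $L$ (the reduced incidence matrix of a tree is unimodular), hence a $\ZZ/m\ZZ$-basis, so $(k_r)\mapsto\sum_r k_r v_r$ is an isomorphism onto $L$. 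This reduces the entire lemma to a purely combinatorial claim: one can choose $n-1$ of the transposition factors whose \emph{twisted} edges form a spanning tree.

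Establishing this claim is the crux, and it is subtle because a spanning tree among the \emph{original} transpositions $\{p_r,q_r\}$ need not have spanning twisted edges (already for $n=3$ the twisted edges can collapse onto two vertices), so the chosen subsequence must be selected with care. Here I would first reduce to the case where every $t_j$ is a transposition, by deleting the factors equal to the identity (in the application the non-transposition factors, being $\pi_{m/1}$-images of diagonal reflections, are all the identity); the deleted factors only add fixed terms to $B$ and leave the suffix products at transposition positions unchanged. Then the twisted edge $\{x_r,y_r\}$ is the support of the conjugate $R_{i_r}^{-1}t_{i_r}R_{i_r}$, and with $P_{j-1}:=t_1\cdots t_{j-1}$ one computes $R_{i_r}^{-1}t_{i_r}R_{i_r}=g^{-1}\big(P_{i_r-1}t_{i_r}P_{i_r-1}^{-1}\big)g$. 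A telescoping induction (using $P_{k-1}\in\langle t_1,\dots,t_{k-1}\rangle$) shows $\langle P_{j-1}t_jP_{j-1}^{-1}:1\le j\le\ell\rangle=\langle t_1,\dots,t_\ell\rangle=\Symm_n$, the last equality because the transpositions are connected and span $\{1,\dots,n\}$. Hence the conjugated transpositions generate a transitive subgroup, which is equivalent to the full graph of all twisted edges being connected and spanning. A connected spanning graph contains a spanning tree, and taking the transposition positions that realize such a tree gives the desired subsequence $(t_{i_1},\dots,t_{i_{n-1}})$. Thus the main obstacle is exactly this existence of a good spanning tree; once the transitivity statement is in hand, the remaining linear algebra over $\ZZ/m\ZZ$ is routine.
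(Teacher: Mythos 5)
Your proof is correct, and it takes a genuinely different route from the paper, whose entire ``proof'' of Lemma~\ref{tree lemma} is a deferral to Lemma 3.2 of \cite{PR} and the argument following it. Your argument is self-contained: (i) you reduce the product condition to the linear system $\sum_r k_r v_r = B$ over $\ZZ/m\ZZ$, where the $v_r$ are the twisted edge vectors obtained by conjugating the selected transpositions by their suffix products; (ii) you check $B$ lies in the sum-zero submodule $L$ using the hypothesis $\wt(\widetilde{g}) = \sum_j \wt(\widetilde{t}_j)$; (iii) you reduce existence-and-uniqueness to the fact that the $n-1$ edge vectors of a spanning tree form a $\ZZ/m\ZZ$-basis of $L$ (unimodularity of the reduced incidence matrix of a tree); and (iv) you produce a spanning tree among the twisted edges via the identity $R_j^{-1}t_jR_j = g^{-1}\bigl(P_{j-1}t_jP_{j-1}^{-1}\bigr)g$, the telescoping fact $\langle P_{j-1}t_jP_{j-1}^{-1} : 1 \le j \le \ell\rangle = \langle t_1,\ldots,t_\ell\rangle = \Symm_n$, and the equivalence between transitivity of a group generated by transpositions and connectivity of its graph. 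All four steps are sound under the paper's conventions \eqref{eq:wreath product}. What this buys is conceptual clarity: the number $n-1$ of special positions is forced as the rank of $L$, and uniqueness is exactly unimodularity of tree incidence matrices, which survives reduction mod any $m$.

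One important caveat, which reflects on the paper's statement rather than on your argument. Your reduction to the all-transposition case works by deleting identity factors, and you justify this by noting that in the paper's applications the non-transposition factors are identities. That restriction is not cosmetic: as literally stated, the lemma allows the non-transposition $t_j$ to be arbitrary permutations, and in that generality it is \emph{false}. Your own twisted-edge formalism produces a counterexample: for $n = 3$ and any $m \geq 2$, take $(t_1, t_2, t_3) = ((1\,2),\, (1\,2\,3),\, (1\,3))$, whose subsequence of transpositions is connected. Both twisted edges equal $\{1,3\}$, so no choice of two transposition positions can span; concretely, writing $b = (b_1,b_2,b_3)$ for the color vector of $\widetilde{t}_2$ and $k_1, k_3$ for the parameters of transposition-like lifts of $t_1, t_3$, the product $\widetilde{t}_1\widetilde{t}_2\widetilde{t}_3$ has color vector $(b_3 + k_1 + k_3,\; b_2,\; b_1 - k_1 - k_3)$. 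Taking $\widetilde{t}_2$ to be the bare permutation matrix ($b = 0$) and $\widetilde{g}$ the lift of $g$ with color vector $(0,1,-1)$ (so $\wt(\widetilde{g}) = 0 = \wt(\widetilde{t}_2)$), no reflections $\widetilde{t}_1, \widetilde{t}_3$ give $\widetilde{g} = \widetilde{t}_1\widetilde{t}_2\widetilde{t}_3$; and taking instead $\widetilde{g}$ with color vector $(0,0,0)$, there are $m$ such pairs rather than one. So the hypothesis you implicitly add --- that the non-transposition factors are the identity --- is exactly what makes the statement true, and it is satisfied in both places the paper invokes the lemma.
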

\begin{proof}
This is essentially Lemma 3.2 and the argument immediately following it in \cite{PR}. 
\end{proof}

\subsection{From $G(m, p, n)$ to $G(p, p, n)$}

Next, we show how to express the generating function for full factorizations of an element $g \in G(m, p, n)$ in terms of the series of $G(p, p, n)$-full factorizations of its projection $\pi_{m/p}(g)$. 

\begin{proposition}
\label{thm:gf for arbitrary elements in G(m, p, n)}
Suppose $p < m$.  Then for any element $g \in G(m, p, n)$, we have
\[
\FFFtr_{m, p, n}(g; z) = \dfrac{1}{(m/p)^{n-1}}  \cdot \FFFtr_{p, p, n}( \pi_{m / p}(g); (m/p)\cdot z) \cdot \FFFtr_{m, p, 1}(\zeta_m^{\wt(g)}; n \cdot z).
\]
\end{proposition}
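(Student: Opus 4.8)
The plan is to exploit the two homomorphisms $\pi_{m/p} \colon G(m,p,n) \to G(p,p,n)$ and $\wt \colon G(m,p,n) \to G(m,p,1)$, which act in complementary ways on the two families of reflections (here the hypothesis $p<m$ guarantees both families are present): a transposition-like reflection has color $0$, so $\wt$ kills it, while $\pi_{m/p}$ sends it to a transposition-like reflection of $G(p,p,n)$; conversely a diagonal reflection lies in $\ker \pi_{m/p}$, while $\wt$ sends it to a nonidentity (hence reflection) element of the cyclic group $G(m,p,1)$. First I would set up a map sending a full factorization $(t_1,\ldots,t_L)$ of $g$ to the triple consisting of the set $T \subseteq \{1,\ldots,L\}$ of positions carrying transposition-like reflections, the subsequence $(\pi_{m/p}(t_i))_{i \in T}$, and the subsequence $(\wt(t_i))_{i \in D}$ over the complementary diagonal positions $D$. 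Since $\pi_{m/p}$ and $\wt$ are homomorphisms vanishing on the opposite family, the products of the latter two subsequences are $\pi_{m/p}(g)$ and $\zeta_m^{\wt(g)}$, and Lemma~\ref{generating sets lemma} shows that fullness of $(t_1,\ldots,t_L)$ is equivalent to the first subsequence being a full factorization in $G(p,p,n)$ and the second a full factorization in $G(m,p,1)$. Thus the triples that arise are exactly those enumerated by the coefficients of $\FFFtr_{p,p,n}(\pi_{m/p}(g);z)$ and $\FFFtr_{m,p,1}(\zeta_m^{\wt(g)};z)$, together with $\binom{L}{N}$ choices of $T$ when $|T|=N$.

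The heart of the argument is the fiber count: for a fixed triple with $|T| = N$ and $|D| = M = L - N$, I would count the full factorizations mapping to it. The underlying permutations are determined (a transposition at each $i \in T$, the identity at each $i \in D$) and their transposition-subsequence is connected because the prescribed factors generate $G(p,p,n) \supseteq \Symm_n$; so Lemma~\ref{tree lemma} supplies a spanning subtree of $n-1$ transpositions among the $T$-positions. The data still to be chosen are the full colors of the non-tree transposition-like factors — each constrained modulo $p$ by its prescribed $\pi_{m/p}$-image, hence having $m/p$ lifts — and the axis positions of the $M$ diagonal factors — each free among $n$ choices, giving $(m/p)^{N-(n-1)} \cdot n^M$ choices in all. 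Because transposition-like reflections have color $0$ while the diagonal colors already sum to $\wt(g)$, the color-sum hypothesis of Lemma~\ref{tree lemma} is satisfied automatically for every such choice, so the lemma determines the $n-1$ tree factors uniquely and forces the product to equal $g$.

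The step I expect to be the main obstacle is verifying that this fiber is exactly the prescribed one: that the tree factors produced by Lemma~\ref{tree lemma} (which a priori only lift the underlying transpositions) really have $\pi_{m/p}$-images equal to the prescribed factors, rather than some other transposition-like reflections with the same underlying permutation. To settle this I would apply $\pi_{m/p}$ to the identity $\widetilde{t}_1 \cdots \widetilde{t}_L = g$ and compare the resulting factorization of $\pi_{m/p}(g)$ in $G(p,1,n)$ with the prescribed one: the two agree on every non-tree position and have the same product, so a second invocation of the uniqueness in Lemma~\ref{tree lemma} (now with $m$ replaced by $p$, the same spanning tree, and the again-automatic color-sum condition) forces agreement on the tree positions as well. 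Along the way one checks that the factors really are reflections of $G(m,p,n)$ — transposition-like factors have color $0$, diagonal factors have color a multiple of $p$ — and that the whole factorization is full by Lemma~\ref{generating sets lemma}.

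Finally, assembling the counts gives
\[
\#\{\text{length-}L\text{ full factorizations of }g\} \;=\; \sum_{N+M=L} \binom{L}{N}\, A_N\, B_M\, (m/p)^{N-(n-1)}\, n^{M},
\]
where $A_N$ and $B_M$ denote the numbers of length-$N$ full factorizations of $\pi_{m/p}(g)$ in $G(p,p,n)$ and length-$M$ full factorizations of $\zeta_m^{\wt(g)}$ in $G(m,p,1)$. Dividing by $L!$, using $\binom{L}{N}/L! = 1/(N!\,M!)$, and recognizing the two resulting exponential generating functions with $z$ rescaled by $m/p$ and by $n$ respectively yields exactly the claimed product, with the global prefactor $1/(m/p)^{n-1}$ emerging from $(m/p)^{N-(n-1)} = (m/p)^{-(n-1)}(m/p)^{N}$. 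Edge cases such as $n=1$ (empty spanning tree, $\FFFtr_{p,p,1}(\id;z)=1$) are consistent and need no separate treatment.
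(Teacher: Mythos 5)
Your proposal is correct and follows essentially the same route as the paper's proof: split each factorization via $\pi_{m/p}$ and $\wt$ into its transposition-like and diagonal parts, use Lemma~\ref{generating sets lemma} to transfer fullness, count the fiber over a fixed pair as $\binom{L}{N}(m/p)^{N-(n-1)}n^{M}$ via the spanning-tree Lemma~\ref{tree lemma}, and resolve the compatibility of the tree factors with the prescribed $\pi_{m/p}$-images by a second application of the uniqueness in that lemma, exactly as the paper does. The final generating-function assembly also matches the paper's computation.
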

\begin{proof}
Let $W = G(m, p, n)$ with $p < m$ and let $g$ be an arbitrary element of $W$.  For each reflection factorization $f$ of $g$, let $f_1$ be the result of deleting all copies of the identity from $\pi_{m/p}(f)$ (equivalently, the result of applying $\pi_{m/p}$ only to the transposition-like factors in $f$) and let $f_2$ be the result of deleting all copies of the identity from $\zeta_m^{\wt(f)}$  (equivalently, of applying $\zeta_m^{\wt}$ only to the diagonal factors in $f$).  Then $f_1$ is a $G(p, p, n)$-reflection factorization of $\pi_{m/p}(g)$ and $f_2$ is a $G(m, p, 1)$-reflection factorization of $\zeta_m^{\wt(g)}$, and their lengths sum to the length of $f$.  By Lemma~\ref{generating sets lemma}, $f$ generates $W$ if and only if $f_1$ generates $G(p, p, n)$ and $f_2$ generates $G(m, p, 1)$.  We now fix such a pair $(f_1, f_2)$ and consider how many preimages $f$ it has among the reflection factorizations of $g$.

Let $k = \# f_1$ and $\ell - k = \# f_2$.  Then the number of preimages of $(f_1, f_2)$ among factorizations of \emph{all} elements in $W$ is $\binom{\ell}{k} \cdot (m / p)^{k} \cdot n^{\ell - k}$: the binomial coefficient counts the ways to assign $k$ positions to transposition-like factors from among $\ell$ positions, the factor $(m / p)^k$ counts the ways to choose for each of $k$ elements of $f_1$ a reflection preimage under $\pi_{m / p}$, and the factor $n^{\ell - k}$ counts the ways to choose for each of $\ell - k$ elements of $f_2$ a reflection preimage under $\zeta_m^{\wt}$.

We claim that among these preimages, precisely $\binom{\ell}{k} \cdot (m / p)^{k - (n - 1)} \cdot n^{\ell - k}$ are factorizations of $g$.  Indeed, following the first step of the construction in the previous paragraph, choose a length-$\ell$ factorization $f^*$ of $\pi_{m/p}(g)$ by shuffling $f_1$ with $\ell - k$ copies of the identity.  
Now project $f^*$ to the symmetric group, giving us an $\Symm_n$-factorization $\pi_{m/1}(f^*)$ of $\pi_{m/1}(g)$.
By construction, $\pi_{m/1}(f_1)$ is equal to the subsequence of transpositions in $\pi_{m/1}(f^*)$.  Since $f_1$ generates $G(p, p, n)$, it is connected, and so $\pi_{m/1}(f_1)$ is connected as well.  Therefore, Lemma~\ref{tree lemma} provides a special subsequence $t_{i_1}, \ldots, t_{i_{n - 1}}$ of $\pi_{m/1}(f^*)$.  As in the previous paragraph, there are $n^{\ell - k} \cdot (m/p)^{k - (n - 1)}$ ways to choose reflection preimages $\widetilde{t}_j$ in $G(m, p, n)$ of the factors $t_j$ in $f^*$ for $j \neq i_1, \ldots, i_{n - 1}$, consistent with the restrictions that the projections under $\pi_{m/p}$ and $\zeta_m^{\wt}$ should give $f_1$ and $f_2$.  By Lemma~\ref{tree lemma}, there is a unique way to lift $\pi_{m/1}(t_{i_1})$, \ldots, $\pi_{m/1}(t_{i_{n - 1}})$ to reflections $\widetilde{t}_{i_1}$, \ldots, $\widetilde{t}_{i_{n - 1}}$ in $G(m, p, n)$ so that $g = \widetilde{t}_1 \cdots \widetilde{t}_{\ell}$.  To complete the counting argument, it remains to show that this unique choice is compatible with $f_1$, i.e., that $\pi_{m/p}(\widetilde{t}_{i_j}) = t_{i_j}$ for $j = 1, \ldots, n - 1$.  

Let $\widetilde{f} = (\widetilde{t}_1, \ldots,  \widetilde{t}_{\ell})$.  By construction, $\pi_{m/p}(\widetilde{f})$ and $f^*$ are both $G(p, p, n)$-factorizations of $\pi_{m/p}(g)$, with corresponding factors sharing the same underlying permutation, that agree 
at all positions except possibly $i_1, \ldots, i_{n - 1}$.  But by Lemma~\ref{tree lemma}, there is a unique $G(p, p, n)$-factorization of $\pi_{m/p}(g)$ that agrees with $f^*$ except at the positions $i_1, \ldots, i_{n - 1}$.  Thus $\pi_{m/p}(\widetilde{f}) = f^*$, so that the constructed factorization $\widetilde{f}$ really does map to the pair $(f_1, f_2)$.  

In summary, so far we have shown that there is a map from full length-$\ell$ $G(m, p, n)$-reflection factorizations of $g$ to pairs $(f_1, f_2)$ such that, for some integer $k$, $f_1$ is a full length-$k$ $G(p, p, n)$-reflection factorization of $\pi_{m/p}(g)$ and $f_2$ is a full length-$(\ell - k)$ $G(m, p, 1)$-reflection factorization of $\zeta_m^{\wt(g)}$, and that each such pair $(f_1, f_2)$ has $\binom{\ell}{k} (m/p)^{k - (n - 1)} n^{\ell - k}$ preimages under this map.  Converting the preceding statement to a generating-function calculation, we have 
\begin{align*}
[z^\ell]& \FFFtr_{m, p, n}(g; z) =\frac{1}{\ell!} \cdot \# \left\{
    \begin{array}{cc}
    \text{length-$\ell$ $G(m, p, n)$-refn.\ facts.} \\ 
    \text{of $g$ that generate $G(m, p, n)$}
    \end{array}\right\} \\
& = \sum_{k} \frac{(m / p)^{k - (n - 1)} n^{\ell - k} }{k! (\ell - k)! }
\cdot \#\left\{\!
    \begin{array}{cc}
    \text{length-$k$ } \\ 
    \text{$G(p, p, n)$-refn.} \\ 
    \text{facts.\ of $\pi_{m/p}(g)$ that} \\
    \text{generate $G(p, p, n)$}
    \end{array}\!\right\} 
\cdot \#\left\{\!
    \begin{array}{cc}
    \text{length-$(\ell - k)$ } \\ 
    \text{$G(m, p, 1)$-refn.} \\
    \text{facts.\ of $\zeta_m^{\wt(g)}$ that} \\ 
    \text{generate $G(m, p, 1)$}
    \end{array}\!\right\} \\
& = (p / m)^{n - 1} \cdot 
\sum_{k} (m / p)^{k}  n^{\ell - k} \cdot [z^k]
\FFFtr_{p, p, n}( \pi_{m / p}(g); z) \cdot [z^{\ell - k}]\FFFtr_{m, p, 1}(\zeta_m^{\wt(g)}; z) \\
& = (p / m)^{n - 1} \cdot  [z^\ell] \left(\FFFtr_{p, p, n}( \pi_{m / p}(g); (m/p)\cdot z) \cdot \FFFtr_{m, p, 1}(\zeta_m^{\wt(g)}; n \cdot z)\right),
\end{align*}
and the proposition follows immediately.
\end{proof}

\subsection{From $G(p,p,n)$ to $\Symm_n$}

Next, we show how to express the generating function for full factorizations of an element $g \in G(p, p, n)$ in terms of the series of $\Symm_n$-full factorizations of its underlying permutation $\pi_{p/1}(g)$.

\begin{proposition}
\label{thm:gf for arbitrary elements in G(m, m, n)}
Fix an element $g \in G(p, p, n)$ with $k$ cycles, of colors $a_1, \ldots, a_k$, and let $d = \gcd(a_1, \ldots, a_k, p)$.  Then

\[
\FFFtr_{p,p,n}(g;z) = \frac{1}{p^{n - 1}}\sum_{r \mid d} \left( \mu(r)\cdot r^{n + k - 2}\cdot \FFFtr_{\Symm_n}\big(\pi_{p/1}(g); (p/r)\cdot z\big)\right),
\]
where $\mu$ is the number-theoretic M\"obius function.
\end{proposition}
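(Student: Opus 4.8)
The plan is to project factorizations from $G(p,p,n)$ down to $\Symm_n$ via the underlying-permutation map $\pi_{p/1}$, in the same spirit as Proposition~\ref{thm:gf for arbitrary elements in G(m, p, n)}, but with two new features. First, since here $m=p$, the group $G(p,p,n)$ has \emph{no} diagonal reflections: every reflection is transposition-like, has color $0$, and projects under $\pi_{p/1}$ to an honest transposition. Second, because we must track fullness rather than mere connectivity, I would split the argument into a lifting count for \emph{connected} factorizations and a Möbius sieve passing from connected to full. Write $\FFF^{\mathrm{conn}}_{p,p,n}(g;z)$ for the exponential generating function of connected reflection factorizations of $g$ (those whose underlying transpositions generate $\Symm_n$), and set $u:=\pi_{p/1}(g)$.

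\textbf{Step 1 (connected factorizations).} I would first establish
\[
\FFF^{\mathrm{conn}}_{p,p,n}(g;z) = \frac{1}{p^{\,n-1}}\,\FFFtr_{\Symm_n}(u;\, p\cdot z).
\]
A connected $G(p,p,n)$-factorization projects to a full (equivalently, connected) $\Symm_n$-factorization of $u$, and conversely I count the fibers of this projection. Given a full $\Symm_n$-factorization $(t_1,\dots,t_\ell)$ of $u$, a lift chooses for each factor one of the $p$ transposition-like reflections over it. Since every such reflection has color $0$ and $\wt(g)=0$ in $\ZZ/p\ZZ$ (as $g\in G(p,p,n)$), the color hypothesis of Lemma~\ref{tree lemma} (applied with $m=p$, all factors transpositions) holds automatically; the lemma then gives, after the $\ell-(n-1)$ non-tree colors are chosen freely, a unique spanning-tree completion with product $g$. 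Hence each full $\Symm_n$-factorization has exactly $p^{\,\ell-(n-1)}$ connected lifts of $g$, and converting to generating functions yields the displayed identity.

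\textbf{Step 2 (connected $=$ sum of fulls over effective moduli).} By the last paragraph of Section~\ref{sec:combinatorial family}, every connected set of reflections in $G(p,p,n)$ generates a subgroup conjugate, by a diagonal element of $G(p,1,n)$, to $G(p',p',n)$ for a unique $p'\mid p$. A gauge (telescoping) computation shows that a conjugate $\delta\, G(p',p',n)\,\delta^{-1}$ contains $g$ exactly when $q:=p/p'$ divides every cycle color of $g$, i.e.\ when $q\mid d$; in that case $g$ corresponds under the conjugating isomorphism to an element $\bar g_q\in G(p',p',n)$ with the same cycle lengths as $g$ and cycle colors $a_C/q$, so that the gcd of its cycle colors with $p'$ equals $d/q$. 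Counting the conjugates containing $g$ (valid gauges $\phi$ modulo the diagonal normalizer of $G(p',p',n)$), I expect to find exactly $q^{\,k-1}$ of them. As $\FFFtr$ is conjugation-invariant and depends only on the conjugacy class of $\bar g_q$ (fixed by its cycle lengths and cycle colors), partitioning connected factorizations of $g$ by the subgroup they generate gives
\[
\FFF^{\mathrm{conn}}_{p,p,n}(g;z) = \sum_{q\mid d} q^{\,k-1}\cdot\FFFtr_{G(p/q,p/q,n)}(\bar g_q;\, z).
\]

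\textbf{Step 3 (Möbius inversion).} Steps 1 and 2 are uniform: applying them inside each $G(p/q,p/q,n)$ to $\bar g_q$ (which again has $k$ cycles, underlying permutation $u$, and gcd parameter $d/q$) gives, for every $q\mid d$,
\[
\frac{1}{(p/q)^{\,n-1}}\,\FFFtr_{\Symm_n}(u;(p/q)z) \;=\; \sum_{q\mid q'\mid d}(q'/q)^{\,k-1}\,\FFFtr_{G(p/q',p/q',n)}(\bar g_{q'};\,z).
\]
Writing $\tilde f(q'):=(q')^{\,k-1}\FFFtr_{G(p/q',p/q',n)}(\bar g_{q'};z)$ and $\tilde F(q):=q^{\,k-1}(p/q)^{-(n-1)}\FFFtr_{\Symm_n}(u;(p/q)z)$, this reads $\tilde F(q)=\sum_{q\mid q'\mid d}\tilde f(q')$, a sum-over-multiples relation on the divisor lattice of $d$. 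Möbius inversion gives $\tilde f(1)=\sum_{q'\mid d}\mu(q')\,\tilde F(q')$, and substituting back (with $\tilde f(1)=\FFFtr_{p,p,n}(g;z)$, $\bar g_1=g$, and $r=q'$) collapses to
\[
\FFFtr_{p,p,n}(g;z)=\frac{1}{p^{\,n-1}}\sum_{r\mid d}\mu(r)\,r^{\,n+k-2}\,\FFFtr_{\Symm_n}(u;(p/r)z),
\]
as claimed. The main obstacle is Step 2 — precisely identifying which conjugates of $G(p',p',n)$ contain $g$ and proving the multiplicity $q^{\,k-1}$ through the gauge/normalizer bookkeeping; once that count is pinned down, Steps 1 and 3 are routine. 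As a consistency check, it is exactly the exponent $q^{\,k-1}$ that makes the inversion produce the stated factor $r^{\,n+k-2}$.
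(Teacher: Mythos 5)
Your proposal follows the same blueprint as the paper's proof: project to $\Symm_n$, use Lemma~\ref{tree lemma} to count lifts, classify the subgroups generated by connected lifts as diagonal conjugates of $G(p/q,p/q,n)$ with $q \mid d$, and perform a M\"obius inversion over the divisor lattice of $d$. The only organizational difference is that you run the inversion at the level of generating functions (relating $\FFF^{\mathrm{conn}}_{p,p,n}$ to the series $\FFFtr_{G(p/q,p/q,n)}$ uniformly in $q$), whereas the paper fixes a projected factorization $f$ of length $\ell$ and inverts the count-level relation $r^{k-1}\cdot b_{p/r} = \sum_{r \mid r' \mid d}(r')^{k-1}\cdot a_{p/r'}$; these are equivalent, and your Steps 1 and 3 check out exactly (including the color hypothesis of Lemma~\ref{tree lemma}, which holds automatically since all reflections of $G(p,p,n)$ are transposition-like of color $0$ and $\wt(g)=0$ in $\ZZ/p\ZZ$, and including the algebra that turns $q^{k-1}$ into the stated exponent $n+k-2$).

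The genuine gap is the one you flag yourself: the count of exactly $q^{k-1}$ diagonal conjugates of $H=G(p/q,p/q,n)$ containing $g$ is asserted (``I expect to find\dots'') but not proven, and the whole formula hinges on it. The paper establishes it by an orbit--stabilizer computation: writing $\delta = [\id;(d_1,\ldots,d_n)]$, the condition $g\in\delta^{-1}H\delta$ amounts to $q \mid d_i - d_{u(i)}$ for all $i$, which gives $p^k(p/q)^{n-k}$ valid $\delta$ (one free color per cycle of $u$, the remaining entries constrained modulo $q$), of which $p\cdot(p/q)^{n-1}$ normalize $H$; the quotient is $q^{k-1}$. A second point your write-up glosses over, but which is needed to make Step 2 an honest identity, is that each of these $q^{k-1}$ conjugates contains the \emph{same} number of length-$\ell$ factorizations of $g$ generating it. Your appeal to conjugation-invariance of $\FFFtr$ does work here --- since $G(p/q,p/q,n)$ is normal in $G(p/q,1,n)$ and conjugation by $G(p/q,1,n)$ permutes its reflections, the series $\FFFtr_{G(p/q,p/q,n)}(\cdot\,;z)$ depends only on cycle lengths and cycle colors --- but this deserves to be said explicitly; the paper instead constructs, for each conjugate $\delta^{-1}H\delta$ containing $g$, an element $\delta^{-1}\delta'$ that commutes with $g$ and carries $H$ to $\delta^{-1}H\delta$, which yields the required bijection directly. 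With the orbit--stabilizer count supplied, your argument is complete and agrees with the paper's.
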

(Since any two integer representatives of $a_i$ differ by a multiple of $p$, the number $d$ is well defined as an integer; it could alternatively be defined as the unique positive integer such that $\{a_1, \ldots, a_k\}$ generates the cyclic subgroup $d\ZZ/p\ZZ$ of $\ZZ/p\ZZ$.)
\begin{proof}
By hypothesis, the colors of all cycles of $g$ are multiples of $d$.  Therefore, recalling the characterization of conjugacy classes in $G(p, 1, n)$ from Section~\ref{sec:Background}, we have that $g$ is conjugate by an element of $G(p, 1, n)$ to an element of $G(p/d, p/d, n)$.  (Here, and in the rest of this proof, we write $G(p/r, p/r, n)$ for the particular subgroup of $G(p, p, n)$ defined in \eqref{def:G(m, p, n)}, rather than the isomorphism class of such groups.) Since $W = G(p, p, n)$ is a normal subgroup of $G(p, 1, n)$, this conjugation extends to a bijection between $W$-full reflection factorizations, and so for convenience, we replace $g$ with its conjugate in $G(p/d, p/d, n)$.

Applying $\pi_{p/1}$ to any reflection factorization of $g$ that is full with respect to $W$ produces a connected $\Symm_n$-factorization of $\pi_{p/1}(g)$.  The main idea of the proof is to count preimages of each such $\Symm_n$-factorization.   The remainder of the argument has three main components: first we enumerate the reflection subgroups of each type that may be generated by such a preimage; then we show that the number of preimages which generate a subgroup of a given type only depends on its isomorphism type; finally we count the preimages according to the type of subgroup they generate. The final answer then follows from an inclusion-exclusion argument.

Fix a connected transposition factorization $f = (t_1, \ldots, t_\ell)$ of $\pi_{p/1}(g)$.  Since $f$ is connected, every subgroup of $W = G(p, p, n)$ that is generated by a lift of $f$ is conjugate in $G(p, 1, n)$ to $G(p/r, p/r, n)$ for some $r \mid p$.  Furthermore, since the cycle colors of $g$ generate $d\ZZ/p\ZZ$, when we restrict to lifts of $f$ that are factorizations of $g$, we have by Lemma~\ref{generating sets lemma} that each one generates a subgroup conjugate to $G(p/r, p/r, n)$ for some $r \mid d$ (not just $r \mid p$).  Moreover, as observed in Section~\ref{sec:Background}, it is enough to allow conjugation only by \emph{diagonal} elements of $G(p, 1, n)$.

We next consider how many distinct $G(p, 1, n)$-conjugates of $H = G(p/r, p/r, n)$ in $W = G(p, p, n)$ contain the element $g = [u; d \cdot a]$.  Let $\delta = [\id; (d_1, \ldots, d_n)] \in G(p, 1, 1)^n \subset G(p, 1, n)$.  By \eqref{eq:wreath product}, we have $\delta^{-1} [v; (b_1, \ldots, b_n)] \delta = [v; (b_1 + d_1 - d_{v(1)}, \ldots, b_n + d_n - d_{v(n)})]$ for any $[v; b] \in W$.  Thus if $g = \delta^{-1} g' \delta$ then $g' = [u; b]$ for some $n$-tuple $b = (b_1, \ldots, b_n) \in (\ZZ/p\ZZ)^n$ of colors.  Consequently, $g \in \delta^{-1} H \delta$ if and only if 
\[
[u; d \cdot a] = [u; (r b_1 + d_1 - d_{u(1)}, \ldots, r b_n + d_n - d_{u(n)})]
\]
for some $b$.  Since $r \mid d$, such a tuple $b$ exists if and only if $r \mid d_i - d_{u(i)}$ for $i = 1, \ldots, n$.  There are $p^k \cdot (p/r)^{n - k}$ choices $\delta$ such that these equations hold: the color of one element from each of the $k$ cycles of $u$ may be chosen arbitrarily, and the remaining colors in the cycle may be chosen to be any of the $p/r$ colors that differ from the first choice by a multiple of $r$.  Among these choices for $\delta$, there are $p \cdot (p/r)^{n - 1}$ that normalize $H$ (every two entries of $\delta$ must differ by a multiple of $r$), so by the orbit-stabilizer theorem there are $r^{k - 1}$ distinct copies of $H$ that contain $g$.

Furthermore, suppose that $\delta^{-1} H \delta$ is an isomorphic copy of $H$ containing $g$, so that 
\[
g = \delta^{-1} [u; r \cdot b] \delta = [u; (r b_1 + d_1 - d_{u(1)}, \ldots, r b_n + d_n - d_{u(n)})].
\]
Choose a new diagonal element $\delta' = [\id; (d'_1, \ldots, d'_n)]$ as follows: select one entry $j$ in each cycle of $u$ and set $d'_j = 0$, and determine the other values of $d'_i$ by the relation $d'_i - d'_{u(i)} = d_i - d_{u(i)}$ for $i = 1, \ldots, n$.  By construction, we have (first) that $(\delta')^{-1} [u; r \cdot b] \delta' = g$, and (second) that $d_i \in r\ZZ/p\ZZ$ for each $i$ and so $\delta' \in H$.  Consequently $\delta^{-1} \delta'$ commutes with $g$ and $(\delta^{-1} \delta') H (\delta^{-1} \delta')^{-1} = \delta^{-1} H \delta$.  Thus,  conjugation by $\delta^{-1} \delta'$ extends to a bijection between lifts of $f$ that are $H$-reflection factorizations of $g$ and lifts of $f$ that are $\delta^{-1} H \delta$-reflection factorizations of $g$ -- or in other words, all the $r^{k - 1}$ distinct copies of $H$ that contain $g$ also contain the same number of factorizations of $g$ that are lifts of $f$.

Now let us count lifts of $f$ according to what subgroup they generate.  Let $a_{p/r}$ be the number of lifts of $f$ that factor $g$ and generate the group $G(p/r, p/r, n)$, and let $b_{p/r}$ be the (generally larger) number of lifts of $f$ that factor $g$ and generate any subgroup of $G(p/r, p/r, n)$.  We next establish a relationship between the $a$s and the $b$s.  It follows from the arguments of the two last paragraphs that every lift of $f$ that factors $g$ and generates a subgroup of $G(p/r, p/r, n)$ in particular generates, for some integer $r'$ such that $r \mid r' \mid d$, one of the $(r'/r)^{k - 1}$ distinct subgroups of $G(p/r, p/r, n)$ that are isomorphic to $G(p/r', p/r', n)$ and contain $g$; and that the number of factorizations that generate each of these subgroups is $a_{p/r'}$.  Therefore for any $r \mid d$ we have
\[
b_{p/r} = \sum_{r' \colon r \mid r' \mid d} (r'/r)^{k - 1} \cdot a_{p/r'},
\]
or equivalently
\[
r^{k - 1} \cdot b_{p/r} = \sum_{r' \colon r \mid r' \mid d} (r')^{k - 1} \cdot a_{p/r'}.
\]
By M\"obius inversion, it follows that the number $a_p = a_{p/1}$ of lifts of $f$ that factor $g$ and generate the full group $W = G(p, p, n)$ is 
\begin{equation}
\label{eq:post-mobius}
a_{p} = \sum_{  r \mid d } \mu(r) \cdot r^{k - 1} \cdot b_{p/r}.
\end{equation}
Next, we compute the number $b_{p/r}$.

Since $f$ is connected, Lemma~\ref{tree lemma} promises a special subsequence $t_{i_1}, \ldots, t_{i_{n - 1}}$ with the following property: each of the $p^{\ell - (n - 1)}$ ways of lifting the $t_j$ for $j \neq i_1, \ldots, i_{n - 1}$ into $W$ determines a unique lift of $f$ to a $W$-factorization of $g$.  
Moreover, if each of the $\ell - (n - 1)$ non-special factors is lifted to a reflection in $G(p/r, p/r, n)$, then they and the product $g$ all live inside $G(p/r, 1, n)$; in this case, Lemma~\ref{tree lemma} promises that the remaining special factors will also lift to transposition-like reflections inside $G(p/r, 1, n)$.  Thus, $b_{p/r} = (p/r)^{\ell - (n - 1)}$ of the lifts generate a subgroup of $G(p/r, p/r, n)$.  Substituting this into \eqref{eq:post-mobius}, we conclude that the number $a_p = a_{p/1}$ of lifts of $f$ that factor $g$ and generate the full group $W = G(p, p, n)$ is 
\[
a_{p} = p^{k - 1} \cdot \sum_{  r \mid d } \mu(r) \cdot (p/r)^{\ell - n - k + 2}.
\]
Now taking into account all choices of $f$, we have
\[
[z^\ell] \FFFtr_{p, p, n}(g; z) = p^{k - 1} \cdot \sum_{  r \mid d } \mu(r) \cdot (p/r)^{\ell - n - k + 2} \cdot [z^\ell] \FFFtr_{\Symm_n}(g; z).
\]
The desired result follows immediately.
\end{proof}

\subsection{Completing the proof of Theorem~\ref{thm:main}}

Let $g$ be an arbitrary element of $W = G(m, p, n)$, and suppose that $g$ has $k$ cycles, of colors $a_1, \ldots, a_k \in \ZZ/m\ZZ$.  By Proposition~\ref{thm:gf for arbitrary elements in G(m, p, n)}, we have
\begin{equation}
    \label{eq:step 1}
    \FFFtr_{m, p, n}(g; z) = \dfrac{1}{(m/p)^{n-1}} \cdot \FFFtr_{p, p, n}( \pi_{m / p}(g); (m/p)\cdot z) \cdot \FFFtr_{m, p, 1}(\zeta_m^{\wt(g)}; n \cdot z).
\end{equation}
Viewed as an element of $G(m, p, n)$, $\pi_{m/p}(g)$ has cycles of colors $\frac{m}{p} a_i \in \ZZ/m\ZZ$; therefore, when viewed as an element of $G(p, p, n)$, its cycles have colors $a_i \in \ZZ/p\ZZ$.  Setting $d = \gcd(a_1, \ldots, a_k, p)$, we have by Proposition~\ref{thm:gf for arbitrary elements in G(m, m, n)} that
\begin{align*}
\FFFtr_{p, p, n}( \pi_{m / p}(g); (m/p)\cdot z) & = \frac{1}{p^{n - 1}}\sum_{r \mid d} \left( \mu(r)\cdot r^{n + k - 2}\cdot \FFFtr_{\Symm_n}\big(\pi_{p/1}(\pi_{m/p}(g)); (p/r)\cdot (m/p) \cdot z\big)\right) \\
    & = \frac{1}{p^{n - 1}}\sum_{r \mid d} \left( \mu(r)\cdot r^{n + k - 2}\cdot \FFFtr_{\Symm_n}\big(\pi_{m/1}(g); (m/r) \cdot z\big)\right).
\end{align*}
Plugging this into \eqref{eq:step 1} immediately gives the result. \hfill $\square$

\section{Recovering leading terms}
\label{sec:leading terms}

In this section we extract the leading term of the generating series $\FFFtr_{m,p,n}(g;z)$ in Theorem~\ref{thm:main} to obtain the number of \emph{minimum-length} full factorizations of an arbitrary element in $G(m, p, n)$. The answer will involve  the Euler totient function $\varphi(m)$,  the \defn{Jordan totient function}
\begin{equation} \label{eq:prod psi and phi}
J_2(m) := \sum_{d\mid m} \mu(m/d) \cdot d^2,
\end{equation}
which counts elements of order $m$ in the group $(\ZZ/m\ZZ)^2$ \cite[\href{https://oeis.org/A007434}{A007434}]{oeis}, and Hurwitz numbers of the symmetric group of genus $0$ (given by Theorem~\ref{thm:S_n genus 0}) and genus $1$. The latter also have an explicit formula.

\begin{theorem}[{Goulden--Jackson \cite{GJ99}; Vakil \cite{V01}}]
\label{GJVGJV}
  For $\lambda=(\lambda_1,\ldots,\lambda_k)$, the number of genus-$1$ transitive transposition factorizations in $\Symm_n$ of a permutation of cycle type $\lambda$ is
  \[
H_1(\lambda) = \frac{1}{24} (n+k)! \left(\prod_{i=1}^k
  \frac{\lambda_i^{\lambda_i}}{(\lambda_i-1)!}\right) \left( n^k -
  n^{k-1}  - \sum_{i=2}^k (i-2)! \cdot e_i(\lambda)\cdot  n^{k-i}\right),
\]
where $e_i$ denotes the $i$th elementary symmetric function.
\end{theorem}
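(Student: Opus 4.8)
The plan is to prove the genus-$1$ formula by bootstrapping from the genus-$0$ formula of Theorem~\ref{thm:S_n genus 0} through the \emph{cut-and-join} (join-cut) recursion, in the spirit of Goulden--Jackson. First I would package the transitive transposition factorizations into the connected generating series
\[
H(\beta;p_1,p_2,\dots)=\sum_{n\geq 1}\ \sum_{\lambda\vdash n}\ \sum_{c\geq 0} H_{(c)}(\lambda)\,\frac{\beta^{c}}{c!}\,p_\lambda,
\]
where $H_{(c)}(\lambda)$ counts the transitive factorizations of a fixed permutation of type $\lambda$ into $c$ transpositions and $p_\lambda=\prod_i p_{\lambda_i}$. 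Appending one transposition $(a\,b)$ to a factorization either \emph{joins} two cycles (when $a,b$ lie in different cycles of the product) or \emph{cuts} one cycle into two (when they lie in the same cycle); tracking this operation on the series yields the cut-and-join equation
\[
\frac{\partial H}{\partial \beta}=\frac12\sum_{i,j\geq 1}\left((i+j)\,p_i p_j\,\frac{\partial H}{\partial p_{i+j}}+ij\,p_{i+j}\left(\frac{\partial^2 H}{\partial p_i\,\partial p_j}+\frac{\partial H}{\partial p_i}\,\frac{\partial H}{\partial p_j}\right)\right),
\]
the product term accounting for the ``cut'' that separates a connected factorization into two connected pieces.

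Second, I would impose the genus grading. Since a transitive factorization of type $\lambda\vdash n$ into $c$ transpositions has genus $g=(c-n-\ell(\lambda)+2)/2$, the genus is read off from $(\beta,p_\lambda)$, and I would split $H=\sum_{g\geq 0}H^{(g)}$ accordingly. The genus-$0$ part $H^{(0)}$ is known in closed form by Theorem~\ref{thm:S_n genus 0}. Substituting the grading into the cut-and-join equation isolates, at the genus-$1$ level, an \emph{inhomogeneous linear} equation for $H^{(1)}$ whose forcing term is built entirely from $H^{(0)}$ (its first and second $p$-derivatives and the quadratic products $\partial_{p_i}H^{(0)}\,\partial_{p_j}H^{(0)}$). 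I would then solve this equation for the coefficient $H_1(\lambda)$, reading it off part by part in the $p_\lambda$ expansion.

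The main obstacle is the explicit solution of the genus-$1$ equation and, in particular, recognizing the resulting expression as the stated closed form. After dividing out the ``universal'' prefactor $\tfrac{1}{24}(n+k)!\prod_i \lambda_i^{\lambda_i}/(\lambda_i-1)!$ — which one checks is produced by the Hurwitz-formula input together with the combinatorial weights of the join and cut operations — what remains is a polynomial identity in the parts $\lambda_1,\dots,\lambda_k$ and $n=\sum\lambda_i$. The heart of the matter is to show that this polynomial equals $n^{k}-n^{k-1}-\sum_{i=2}^{k}(i-2)!\,e_i(\lambda)\,n^{k-i}$; this is where the elementary symmetric functions $e_i(\lambda)$ emerge, through the sums over pairs $(i,j)$ of cycle sizes in the join/cut terms, and verifying it requires a careful symmetric-function computation (conveniently organized by induction on $k$, the number of cycles, with the one-cycle base case $\lambda=(n)$ handled directly).

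I would also note two alternative routes. One can run the recursion directly at the level of fixed cycle types rather than on the full series, obtaining a relation that expresses $H_1(\lambda)$ in terms of $H_0$ of the refinements of $\lambda$ (from genuinely disconnecting cuts) together with lower-complexity genus-$1$ data (from joins), and then prove the formula by induction using the genus-$0$ input. Alternatively, Vakil's proof proceeds geometrically, via degeneration on the moduli space of stable maps/admissible covers, extracting $H_1$ from the genus-$1$ boundary contributions; that route avoids the symmetric-function bookkeeping but requires the full apparatus of the ELSV-type correspondence rather than the elementary generating-function machinery used elsewhere in this paper.
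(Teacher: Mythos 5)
The first thing to note is that the paper does not prove Theorem~\ref{GJVGJV} at all: it is quoted as a known result of Goulden--Jackson \cite{GJ99} and Vakil \cite{V01}, and is used purely as input when extracting leading coefficients in Theorem~\ref{thm:main lead coeff} (exactly as the genus-$0$ Hurwitz formula, Theorem~\ref{thm:S_n genus 0}, is). So there is no in-paper argument to compare your attempt against; the only meaningful benchmark is the cited literature, and what you have written is an outline of the Goulden--Jackson route.

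Measured as a proof, your outline has a genuine gap. The setup is correct and standard: the connected generating series, the cut-and-join equation (your version, including the quadratic term $\frac{\partial H}{\partial p_i}\frac{\partial H}{\partial p_j}$ accounting for cuts that disconnect a transitive factorization, is the right one for the connected series), and the genus grading that isolates an inhomogeneous linear equation for $H^{(1)}$ with forcing term built from $H^{(0)}$. But essentially all of the mathematical content of \cite{GJ99} lives in the step you defer: actually solving that equation and recognizing the solution as the stated closed form, i.e., producing the polynomial $n^k - n^{k-1} - \sum_{i=2}^k (i-2)!\, e_i(\lambda)\, n^{k-i}$ after stripping the prefactor $\frac{1}{24}(n+k)!\prod_i \lambda_i^{\lambda_i}/(\lambda_i-1)!$. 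Your proposal asserts that this ``requires a careful symmetric-function computation'' organized by induction on $k$, but it supplies neither an ansatz for $H^{(1)}$, nor the induction step, nor even a verification of the base case $\lambda=(n)$; in particular, the provenance of the coefficients $(i-2)!$ is never explained, and it is exactly this identification that resisted proof for some years (the formula was conjectured in earlier work of Goulden--Jackson--Vainshtein before \cite{GJ99} and \cite{V01} proved it). Without that step the attempt is a plan, not a proof. A peripheral inaccuracy: your description of Vakil's argument as a degeneration on moduli of stable maps is off the mark --- the proof in \cite{V01} is essentially combinatorial, via recursions and graph-theoretic interpretations, and does not invoke ELSV-type machinery.
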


In order to extract the leading coefficients of $\FFFtr_{m,p,n}(g;z)$, we need first to determine the full reflection length $\ltr(g)$ for $g$ in $W = G(m,p,n)$. We do this by calculating from Theorem~\ref{thm:main} the degree of the leading term in the generating function $\FFFtr_{m,p,n}(g;z)$.  Although the relation between generating functions in Theorem~\ref{thm:main} is stated uniformly for all $m, p, n$, it is most convenient to formulate our corollary separately for $G(m, m, n)$ and for $G(m, p, n)$ with $p < m$. This is because of the appearance of the cyclic group $p\ZZ/m\ZZ$ in Theorem~\ref{thm:main} -- when $p = m$, this group is trivial and does not contribute to the (full) reflection length (see Remark~\ref{rem: main theorem special cases}).

\begin{corollary}
\label{cor:full length}
Let $W = G(m, p, n)$.
For an element $g\in W$ with $k$ cycles, of colors $a_1, \ldots, a_k$, let $d = \gcd(a_1, \ldots, a_k, p)$  and $a=\gcd(\col(g),m)/p$. If $m=p$, we have 
    \[
\ltr(g) = \begin{cases}
    n + k - 2, & \text{ if } d = 1 \\
    n + k , & \text{ if } d \neq 1,
    \end{cases}
    \]
while if $m\neq p$, we have
    \[
    \ltr(g) = \begin{cases}
    n + k - 1, & \text{ if } a=1 \text{ and } d = 1 \\
    n + k, & \text{ if } a\neq 1 \text{ and }  d = 1\\
    n + k + 1, & \text{ if } a=1 \text{ and }  d \neq 1\\
    n + k + 2, & \text{ if } a\neq 1 \text{ and }  d \neq 1.
    \end{cases}
    \]

\end{corollary}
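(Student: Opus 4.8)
The plan is to recover $\ltr(g)$ as the \emph{valuation} of the exponential generating function $\FFFtr_{m,p,n}(g;z)$, i.e.\ the degree of its lowest-order term, which by the discussion following \eqref{Eq: defn Ftr_W(g;t)} is exactly $\ltr(g)$. Theorem~\ref{thm:main} writes this series as $1/m^{n-1}$ times a product of two factors: the cyclic-group series $\FFFtr_{m,p,1}(\zeta_m^{\wt(g)};n z)$ and the finite sum $\sum_{r \mid d}\mu(r)\,r^{n+k-2}\,\FFFtr_{\Symm_n}(\pi_{m/1}(g);(m/r)z)$. Because the formal power series ring is an integral domain, valuation is additive across products, and neither the scalar $1/m^{n-1}$ nor the dilations $z \mapsto cz$ affect it; so it suffices to compute the valuation of each factor separately and add.

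For the cyclic factor, recall that $G(m,p,1)$ is cyclic of order $M := m/p$ with every nonidentity element a reflection, and $\zeta_m^{\wt(g)}$ sits inside it as an element of order $M/a$, where $a = \gcd(\col(g),m)/p$; in particular it generates $G(m,p,1)$ precisely when $a=1$. Hence its full reflection length---equivalently the valuation of $\FFFtr_{m,p,1}(\zeta_m^{\wt(g)};n z)$, which one can also read off from the closed form of Example~\ref{ex:cyclic}---is $0$ when $M=1$ (the case $m=p$, where the group is trivial), is $1$ when $M>1$ and $a=1$ (a single generating reflection suffices), and is $2$ when $M>1$ and $a\neq 1$ (a generator together with one correction).

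For the symmetric-group sum, write $\lambda=(\lambda_1,\dots,\lambda_k)$ for the cycle type of $\pi_{m/1}(g)$. By definition $\FFFtr_{\Symm_n}(\pi_{m/1}(g);w)$ collects the transitive transposition factorizations, which by Theorem~\ref{thm:S_n genus 0} and Theorem~\ref{GJVGJV} begin
\[
\FFFtr_{\Symm_n}(\pi_{m/1}(g);w) = \frac{H_0(\lambda)}{(n+k-2)!}\,w^{n+k-2} + \frac{H_1(\lambda)}{(n+k)!}\,w^{n+k} + O(w^{n+k+2}),
\]
with no term in between, because transitive factorizations of a fixed permutation all have length of a single parity. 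Substituting $w=(m/r)z$ and summing over $r \mid d$, the coefficient of $z^{n+k-2}$ is a positive multiple of $\sum_{r\mid d}\mu(r) = [\,d=1\,]$, while the coefficient of $z^{n+k}$ is a positive multiple of $\sum_{r\mid d}\mu(r)/r^2 = J_2(d)/d^2$. Thus the genus-$0$ contribution survives exactly when $d=1$, yielding valuation $n+k-2$; when $d\neq 1$ it cancels and the genus-$1$ term governs, yielding valuation $n+k$.

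Adding the two valuations gives the four cases for $m\neq p$ (indexed by the pair $(a,d)$) and, since the cyclic factor is then identically $1$, the two cases for $m=p$ (indexed by $d$ alone), matching the statement. The one point requiring care---and the main obstacle---is the non-cancellation in the $d \neq 1$ regime: one must verify that $H_1(\lambda)>0$ and that $J_2(d)/d^2\neq 0$ (true for every $d$, since $J_2(d)>0$), so that the valuation jumps exactly to $n+k$ rather than higher. This is where the standing assumption $n\ge 2$ enters, since for $n=1$ the symmetric-group factor is the constant $1$ and carries no genus-$1$ term; but in that case $G(m,p,1)$ is itself the cyclic group, whose full reflection lengths are exactly the $0$/$1$/$2$ values computed for the cyclic factor above. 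Everything else is routine bookkeeping with valuations and the two elementary Möbius identities; as a consistency check, multiplying the surviving lowest-order coefficient by $\ltr(g)!$ reproduces the leading-term formulas of Theorem~\ref{thm:main lead coeff}.
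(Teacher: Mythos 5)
Your proof is correct and takes essentially the same route as the paper's: read off $\ltr(g)$ as the valuation of the right-hand side of Theorem~\ref{thm:main}, with the cyclic factor contributing $0$, $1$, or $2$ according to whether $m=p$, $a=1$, or $a\neq 1$, and the symmetric-group sum contributing $n+k-2$ or $n+k$ via the M\"obius cancellation $\sum_{r\mid d}\mu(r)=0$ and the identity $\sum_{r\mid d}\mu(r)/r^2=J_2(d)/d^2\neq 0$. If anything, you are more explicit than the paper about two points it leaves tacit --- the parity argument ruling out a $z^{n+k-1}$ term and the positivity $H_1(\lambda)>0$ (hence the need for $n\geq 2$, where in fact $H_1(\lambda)\geq H_0(\lambda)>0$ since appending $t\cdot t$ to a minimal transitive factorization preserves transitivity).
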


\begin{proof}
We need only compute the degree (in $z$) of the lowest-order term of the generating function $\FFFtr_{m,p,n}(g; z)$. Looking at the right side of the equation in Theorem~\ref{thm:main}, we make the following observations. 
\begin{enumerate}[(1)]
    \item When $m=p$, the group $G(m,m,1)$ is the trivial group and the generating function $\FFFtr_{m,m,1}(\zeta_m^{\wt(g)};n\cdot z)$ equals $1$ as we explain in Remark~\ref{rem: main theorem special cases}, contributing the factor $z^0$ for the degree of the lowest-order term of $\FFFtr_{m,p,n}(g;z)$.
    \item When $m\neq p$, the group $G(m,p,1)$ is the cyclic group of order $m/p$ and the generating function $\FFFtr_{m,p,1}(\zeta_m^{\wt(g)};n\cdot z)$ will either contribute a factor of $z^1$ (if $\zeta_m^{\wt(g)}$ generates $G(m,p,1)$) or $z^2$ (if not) to the degree of the lowest-order monomial.  The condition $\zeta_m^{\wt(g)}$ generates $G(m,p,1)$ is equivalent to $\gcd(\wt(g), m) = p$, i.e., to $a = 1$.
    \item When $d=1$, the sum factor has a unique term $\FFFtr_{\Symm_n}(\pi_{m/1}(g);m\cdot z)$.  By Theorem~\ref{thm:S_n genus 0}, since $\pi_{m/1}(g)$ has $k$ cycles, this generating function will contribute the factor $z^{n+k-2}$. 
    \item When $d\neq 1$, the coefficient of $z^{n+k-2}$ in $\FFFtr_{\Symm_n}(\pi_{m/1}(g);(m/r)\cdot z)$ is a multiple of 
    \[
    \sum_{r \colon r \mid d} \mu(r)\cdot r^{n + k - 2} \cdot (m/r)^{n+k-2}=m^{n+k-2}\cdot\sum_{r \colon r\mid d}\mu(r)=0,
    \] leaving $z^{n+k}$ as the contribution to the lowest-order monomial. Indeed, its coefficient 
    \[
    \sum_{r \colon r \mid d} \mu(r)\cdot r^{n + k - 2} \cdot (m/r)^{n+k}=\dfrac{m^{n+k}}{d^2}\sum_{b \colon b\mid d}\mu(d/b)\cdot b^2
    \]
    equals  $\dfrac{m^{n+k}}{d^2} J_2(d)$ by \eqref{eq:prod psi and phi} and thus is nonzero. 
\end{enumerate}
The statement of the corollary is immediate after the previous points.
\end{proof}

\begin{remark}
It is interesting to observe that the formulas for full reflection length in Corollary~\ref{cor:full length} are efficiently computable (indeed, the computation is completely straightforward).  By contrast, although an explicit combinatorial formula exists for reflection length in $G(m, p, n)$ \cite[Thm.~4.4]{Shi2007}, it is computationally intractable in general -- see \cite[Rem.~2.4]{LW}.
\end{remark}

\begin{theorem}
\label{thm:main lead coeff}
For an element $g\in G(m,p,n)$ with $k$ cycles, of colors $a_1, \ldots, a_k$, let $d = \gcd(a_1, \ldots, a_k, p)$ and $a=\gcd(\col(g),m)/p$. If $m=p$, we have 
    \[
\Ftr_{m,m,n}(g) = \begin{cases}
    m^{k-1} \cdot H_0(\lambda), & \text{ if } d = 1 \\
    m^{k+1} \cdot \frac{J_2(d)}{d^2} \cdot  H_1(\lambda), & \text{ if } d \neq 1,
    \end{cases}
    \]
while if $m\neq p$, we have
    \[
    \Ftr_{m,p,n}(g) = \begin{cases}
    n(n+k-1)\cdot m^{k-1}  \cdot H_0(\lambda), & \text{ if } a=1 \text{ and } d = 1 \\[6pt]
    \frac{n^2 (n+k)(n+k-1)m^{k} }{2} \cdot \frac{\varphi(a)}{pa} \cdot H_0(\lambda), & \text{ if } a\neq 1 \text{ and }  d = 1\\[6pt]
    n(n+k+1)m^{k+1} \cdot \frac{J_2(d)}{d^2} \cdot H_1(\lambda), & \text{ if } a=1 \text{ and }  d \neq 1\\[6pt]
    \frac{n^2(n+k+2)(n+k+1)m^{k+2}}{2} \cdot  \frac{\varphi(a)}{pa} \cdot \frac{J_2(d)}{d^2}\cdot  H_1(\lambda), & \text{ if } a \neq 1 \text{ and }  d \neq 1,
    \end{cases}
    \]
where  $\lambda$ is the cycle type of the underlying permutation $\pi_{m/1}(g)$.
\end{theorem}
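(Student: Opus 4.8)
The plan is to read off the lowest-order term of $\FFFtr_{m,p,n}(g;z)$ directly from the product formula of Theorem~\ref{thm:main}, using that the leading (lowest-degree nonzero) term of a product of power series is the product of the leading terms of the factors. Since the full reflection length $\ltr(g)$ is already pinned down by Corollary~\ref{cor:full length}, it suffices to compute the coefficient of $z^{\ltr(g)}$ and then multiply by $\ltr(g)!$, because the lowest term equals $\Ftr_{m,p,n}(g)\cdot z^{\ltr(g)}/\ltr(g)!$. The right-hand side of Theorem~\ref{thm:main} has three factors: the scalar $1/m^{n-1}$, the cyclic series $\FFFtr_{m,p,1}(\zeta_m^{\wt(g)};n\cdot z)$, and the M\"obius-weighted sum of symmetric-group series. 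I would analyze the leading behavior of the latter two separately before combining, and I would use the degrees computed in Corollary~\ref{cor:full length} as a running consistency check.

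For the symmetric-group sum, I would expand each $\FFFtr_{\Symm_n}(\pi_{m/1}(g);(m/r)z)$ using that genus-$g$ transitive factorizations have length $n+k-2+2g$; thus its expansion in $z$ begins with a genus-$0$ term $H_0(\lambda)(m/r)^{n+k-2}z^{n+k-2}/(n+k-2)!$ followed by a genus-$1$ term $H_1(\lambda)(m/r)^{n+k}z^{n+k}/(n+k)!$, with $H_0$ and $H_1$ supplied by Theorems~\ref{thm:S_n genus 0} and~\ref{GJVGJV}. When $d=1$ the sum is the single term $r=1$, contributing $H_0(\lambda)m^{n+k-2}z^{n+k-2}/(n+k-2)!$. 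When $d\neq 1$ the genus-$0$ contributions carry a factor $\sum_{r\mid d}\mu(r)=0$ and cancel, exactly as noted in the proof of Corollary~\ref{cor:full length}; the surviving genus-$1$ contribution has coefficient $\frac{H_1(\lambda)m^{n+k}}{(n+k)!}\sum_{r\mid d}\mu(r)/r^2$, and the substitution $r\mapsto d/r$ in the definition~\eqref{eq:prod psi and phi} gives $\sum_{r\mid d}\mu(r)/r^2=J_2(d)/d^2$, producing the factor $J_2(d)/d^2$ in the statement.

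For the cyclic factor I would invoke Example~\ref{ex:cyclic}, applied to the group $G(m,p,1)\cong\ZZ/(m/p)\ZZ$ of order $M:=m/p$ and to the element $\zeta_m^{\wt(g)}$, whose order is $M/a$ where $a=\gcd(\wt(g)/p,M)=\gcd(\col(g),m)/p$. When $m=p$ this group is trivial and the factor equals $1$ (Remark~\ref{rem: main theorem special cases}), contributing $z^0$. When $m\neq p$ and $a=1$ the element is a generator, so $\ltr=1$, the unique length-$1$ full factorization gives leading term $z$, and the substitution $z\mapsto nz$ turns this into $nz$. When $m\neq p$ and $a\neq 1$ the element is not a generator, $\ltr=2$, and the leading coefficient is the number of ordered pairs $(t_1,t_2)$ of nonidentity elements of $\ZZ/M\ZZ$ with $t_1t_2=\zeta_m^{\wt(g)}$ and $\langle t_1,t_2\rangle=\ZZ/M\ZZ$; writing $t_1=\zeta_M^{i}$, $t_2=\zeta_M^{j}$, this is the count of pairs $(i,j)$ with $i+j\equiv \wt(g)/p\pmod M$ and $\gcd(i,j,M)=1$. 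I expect this count to be the one step requiring genuine care: an elementary but slightly delicate argument shows it equals $M\varphi(a)/a$ (consistent with the identity case $a=M$, where it reduces to $\varphi(M)$, and alternatively extractable as the $z^2$-coefficient of the explicit series in Example~\ref{ex:cyclic}). After $z\mapsto nz$ this contributes a leading term $\frac{M\varphi(a)}{a}\cdot\frac{n^2z^2}{2}$.

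Finally, I would multiply the leading terms of the two factors with the scalar $1/m^{n-1}$, check that the resulting power of $z$ matches $\ltr(g)$ from Corollary~\ref{cor:full length}, and multiply by $\ltr(g)!$ to recover $\Ftr_{m,p,n}(g)$. Carrying this out in each of the six cases governed by whether $m=p$, whether $a=1$, and whether $d=1$, and simplifying $M=m/p$ together with factorial ratios such as $(n+k)!/(n+k-2)!=(n+k)(n+k-1)$, yields precisely the two formulas for $m=p$ and the four formulas for $m\neq p$ in the statement. The genus-$0$/genus-$1$ dichotomy comes from the $d$-cases of the symmetric-group sum, the powers $(n+k-1)$, $(n+k)(n+k-1)$, etc.\ come from the $a$-cases of the cyclic factor and the factorial conversion, and the arithmetic factors $\varphi(a)/(pa)$ and $J_2(d)/d^2$ come directly from the two number-theoretic computations above.
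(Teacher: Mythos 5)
Your proposal is correct and follows essentially the same route as the paper's proof: extract the lowest-order term of each factor in Theorem~\ref{thm:main}, use Corollary~\ref{cor:full length} for the degrees, cancel the genus-$0$ terms via $\sum_{r \mid d}\mu(r)=0$ when $d\neq 1$, identify $\sum_{r\mid d}\mu(r)/r^2 = J_2(d)/d^2$, and convert factorials. The only (minor) divergence is in computing the number of full length-$2$ factorizations in the cyclic group when $a\neq 1$: the paper gets $c=\frac{m}{pa}\varphi(a)$ by M\"obius inversion over cyclic subgroups, whereas you reduce it to counting pairs $(i,j)$ with $i+j\equiv \wt(g)/p \pmod{M}$ and $\gcd(i,j,M)=1$ --- which indeed equals $M\varphi(a)/a$ since $\gcd(i,s-i,M)=\gcd(i,a)$ when $\gcd(s,M)=a$, an argument you assert rather than spell out, but which is correct and equivalent.
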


\begin{proof}
Since $g$ has $k$ cycles, the full reflection length of the permutation $\pi_{m/1}(g)$ is $n+k-2$. 
We consider the same cases as of Corollary~\ref{cor:full length}. 

If $m=p$, then in all cases $\FFFtr_{m,m,1}\big(\zeta_m^{\wt(g)};n\cdot z\big)=1$.  If $d=1$ then $\ltr(g)=n+k-2$ by Corollary~\ref{cor:full length}.  Thus by Theorem~\ref{thm:main} we have 
\begin{align*}
\Ftr_{m,m,n}(g) & = \frac{(n+k-2)!}{m^{n-1}} \cdot [z^{n+k-2}]\, \FFFtr_{\Symm_n}\big(\pi_{m/1}(g);m\cdot z\big) \\
& = \frac{(n+k-2)!}{m^{n-1}} \cdot \frac{m^{n+k-2} H_0(\lambda)}{(n+k-2)!} \\
&= m^{k-1} \cdot H_0(\lambda).
\end{align*}
If instead $d\neq 1$, then $\ltr(g)=n+k$  by Corollary~\ref{cor:full length}.  Thus by Theorem~\ref{thm:main} we have 
\begin{align*}
\Ftr_{m,m,n}(g) &= \frac{(n+k)!}{m^{n-1}} \cdot [z^{n+k}]\, \sum_{r: r\mid d} \left(\mu(r) r^{n+k-2} \FFFtr_{\Symm_n}\big(\pi_{m/1}(g);(m/r)\cdot z\big)\right)\\
&=\frac{1}{m^{n-1}} \cdot H_1(\lambda) \cdot \sum_{r: r\mid d} \mu(r)  \cdot r^{n+k-2} \cdot (m/r)^{n+k}.
\end{align*}
By \eqref{eq:prod psi and phi}, this simplifies to
\[
\frac{1}{m^{n-1}} \cdot H_1(\lambda) \cdot \frac{m^{n+k}}{d^2} \cdot J_2(d) = \frac{m^{k+1}}{d^2}\cdot J_2(d) \cdot  H_1(\lambda),
\]
as claimed.

Now suppose $m\neq p$. Observe that $ \frac{m}{\gcd(\col(g), m)} = \frac{m}{pa}$ is precisely the order of $\col(g)$ in the cyclic group $\ZZ/m\ZZ$ (or any subgroup thereof that contains it).  In particular, $\zeta_m^{\col(g)}$ generates $G(m, p, 1) \cong \ZZ/(m/p)\ZZ$ if and only if $a = 1$.

If $d = 1$, then, as in the case $p = m$, the summation in Theorem~\ref{thm:main} consists of a single term, whose lowest-degree term is $H_0(\lambda) \cdot \frac{m^{n + k - 2} z^{n + k - 2}}{(n + k - 2)!}$.  If $a = 1$, so that $\zeta_m^{\col(g)}$ generates $G(m, p, 1)$, then the lowest-degree term of $\FFFtr_{m,p,1}(\zeta_m^{\wt(g)};n\cdot z)$ is $n\cdot z$.  It follows that the term of degree $\ltr(g) = n + k - 1$ in this case is 
\[
\Ftr_{m,p,n}(g) = \frac{(n+k-1)!}{m^{n-1}} \cdot H_0(\lambda) \cdot \frac{m^{n + k - 2} z^{n + k - 2}}{(n + k - 2)!} \cdot n = n(n+k-1)\cdot m^{k-1}  \cdot H_0(\lambda),
\]
as claimed.

Still considering the case $p \neq m$ and $d = 1$, let us suppose instead that $a \neq 1$.  In this case, the contribution from $\FFFtr_{m,p,1}(\zeta_m^{\wt(g)};n\cdot z)$ is $c \cdot n^2 \cdot z^2/2$ where $c$ is the number of full factorizations of length $2$ of $\zeta_m^{\wt(g)}$ in the cyclic group $G(m, p, 1)$.  More generally, let $a(R, N)$ be the number of length-$2$ (not necessarily reflection) factorizations of an element of order $N/R$ in the cyclic group of order $N$ that do not lie in a proper subgroup. The number of all length-$2$ factorizations of such an element is $b(R, N) = N$, and consequently 
\[
N = \sum_{r \colon r \mid R} a(R/r, N/r)
\]
for all $N$ such that $R \mid N$.  By M\"obius inversion, it follows that
\[
a(R, N) = \sum_{r : r\mid R} \mu(r) \cdot b(R/r, N/r) = \frac{N}{R} \sum_{r:r\mid R} \mu(r) \cdot R/r  = \frac{N}{R} \varphi(R).
\]
Now specializing to our particular case $N = \frac{m}{p}$ and $R = a$, since the element of order $N/R$ is not a generator, the factorizations counted by $a(R,N)$ are actually full reflection factorizations. Thus we have $c= \frac{m}{pa} \varphi(a)$ and we obtain
\begin{align*}
\Ftr_{m,p,n}(g) 
& = \frac{n^2 \cdot (n+k)! }{2\cdot m^{n-1}} \cdot \frac{m}{pa} \cdot \varphi(a)\cdot  \frac{m^{n + k + 2} \cdot H_0(\lambda)}{(n + k - 2)!} \\
&= \frac{n^2 (n+k)(n+k-1)m^{k} }{2pa} \cdot \varphi(a) \cdot H_0(\lambda),
\end{align*}
as claimed.

On the other hand, if $d \neq 1$, then, again as in the case $p = m$, the lowest-degree term from the summation factor is 
\[
 [z^{n+k}] \, \sum_{r, r\mid d} \left(\mu(r) r^{n+k-2} \FFFtr_{\Symm_n}\big(\pi_{m/1}(g);(m/r)\cdot z\big)\right) = \frac{m^{n + k} \cdot J_2(d) \cdot H_1(\lambda)}{(n + k)! \cdot d^2} .
\]
Consequently, when $a = 1$ we have
\begin{align*}
    \Ftr_{m, p, n}(g) & = \frac{(n + k + 1)!}{m^{n - 1}} \cdot n \cdot \frac{m^{n + k} \cdot J_2(d) \cdot H_1(\lambda)}{(n + k)! \cdot d^2} \\
    & = \frac{n(n + k + 1) m^{k + 1}}{d^2} \cdot J_2(d) \cdot H_1(\lambda),
\end{align*}
while when $a \neq 1$ we instead have
\begin{align*}
\Ftr_{m,p,n}(g) & = \frac{n^2 \cdot (n+k+2)!}{2\cdot m^{n-1}} \cdot \frac{m}{pa} \cdot \varphi(a) \cdot \frac{m^{n + k} \cdot J_2(d) \cdot H_1(\lambda)}{(n + k)! \cdot d^2}\\
&= \frac{n^2(n + k + 1)(n + k + 2) m^{k + 2}}{2d^2pa} \cdot \varphi(a) \cdot J_2(d) \cdot H_1(\lambda),
\end{align*}
as claimed.
\end{proof}

\begin{remark}
If $W$ is a complex reflection group of exceptional type, we can recover the number $\Ftr_W(g)$ of minimum-length full reflection factorizations of an element $g$ by computing the series $\FFFtr_{W}(g; z)$ as in Section~\ref{Sec: Frob lemma} and extracting the lowest-order term of the series.  When we express the generating function as a Laurent polynomial in $X = e^z$ as in Proposition~\ref{Prop: structural result FFFtr}, this gives
\[
\Ftr_W(g) = \frac{1}{\# W} \cdot \Phi_W(g; 1) \cdot \ltr(g)!.
\]
For example, continuing where Example~\ref{Example: FFFtr(H3)} left off, we have that $\Phi_{H_3}(\id; 1) = 28800$ and so the number of minimum-length full reflection factorizations of the identity in $H_3$ is given by
\begin{align*}
\Ftr_{H_3}(\id) & = \dfrac{1}{\#H_3} \cdot \Phi_{H_3}(\id; 1)\cdot \ltr[H_3](\id)! \\
                & = \frac{1}{120} \cdot 28800 \cdot 6! \\
                & = 172800.
\end{align*}
\end{remark}

\section*{Acknowledgements}

We thank David Jackson and Jiayuan Wang  for helpful comments and suggestions.
This work was facilitated by computer experiments using Sage \cite{sagemath}, its algebraic combinatorics features developed by the Sage-Combinat community \cite{Sage-Combinat}, and CHEVIE \cite{chevie}. 

The second-named author was partially supported by an ORAU Powe award and by a grant from the Simons Foundation (634530). The third-named author  was partially supported by NSF grant DMS-1855536.  

An extended abstract of this work appeared as \cite{DLM-extended-abstract}.

\printbibliography

\newgeometry{margin=.5in}

\appendix

\section{Polynomials and their roots}
\label{appendix}

Below, we give the polynomials $\Phi_W(\id; X)$ (as in Proposition~\ref{Prop: structural result FFFtr}) for exceptional real reflection groups $W$.  The polynomials $\Phi_W(\id; X)$ for all exceptional complex reflection groups $W$ are attached to this arXiv submission as a supplementary file.  For all exceptional complex groups, we give figures showing the location of their roots in the complex plane.

\subsection[Polynomial for the exceptional real reflection groups]{The polynomials $\Phi_W(\id; X)$ for exceptional real reflection groups}

There are six exceptional real reflection groups, of types $H_3$, $H_4$, $F_4$, $E_6$, $E_7$, and $E_8$ (in the Shephard--Todd classification, respectively $G_{23}$, $G_{30}$, $G_{28}$, $G_{35}$, $G_{36}$, and $G_{37}$).  We also include the dihedral group of order $12$, $I_2(6)=G_2=G(6,6,2)$, for which the coefficients of $\Phi_{G_2}(\id;X)$ are \emph{not} unimodal.

\begin{dmath*}
\Phi_{G_2}(\id; X)=X^8 + 4X^7 + 10X^6 + 16X^5 + 10X^4 + 16X^3 + 10X^2 + 4X + 1
\end{dmath*}

\begin{dmath*}
\Phi_{H_3}(\id; X) =  
X^{24} + 6 X^{23} + 21 X^{22} + 56 X^{21} + 126 X^{20} + 252 X^{19} + 462 X^{18} + 792 X^{17} + 1287 X^{16} + 2002 X^{15} + 2949 X^{14} + 4044 X^{13} + 4804 X^{12} + 4044 X^{11} + 2949 X^{10} + 2002 X^{9} + 1287 X^{8} + 792 X^{7} + 462 X^{6} + 252 X^{5} + 126 X^{4} + 56 X^{3} + 21 X^{2} + 6 X + 1
\end{dmath*}

\begin{dmath*}
\Phi_{H_4}(\id; X) =  
X^{112} + 8 X^{111} + 36 X^{110} + 120 X^{109} + 330 X^{108} + 792 X^{107} + 1716 X^{106} + 3432 X^{105} + 6435 X^{104} + 11440 X^{103} + 19448 X^{102} + 31824 X^{101} + 50388 X^{100} + 77520 X^{99} + 116280 X^{98} + 170544 X^{97} + 245157 X^{96} + 346104 X^{95} + 480700 X^{94} + 657800 X^{93} + 888030 X^{92} + 1184040 X^{91} + 1560780 X^{90} + 2035800 X^{89} + 2629575 X^{88} + 3365856 X^{87} + 4272048 X^{86} + 5379616 X^{85} + 6724520 X^{84} + 8347680 X^{83} + 10295504 X^{82} + 12620512 X^{81} + 15382089 X^{80} + 18647400 X^{79} + 22492500 X^{78} + 27003672 X^{77} + 32279026 X^{76} + 38430392 X^{75} + 45585540 X^{74} + 53890760 X^{73} + 63513997 X^{72} + 74648736 X^{71} + 87518832 X^{70} + 102384480 X^{69} + 119545920 X^{68} + 139341984 X^{67} + 162136992 X^{66} + 188289504 X^{65} + 218095185 X^{64} + 251696040 X^{63} + 288937188 X^{62} + 329152344 X^{61} + 370859178 X^{60} + 411345720 X^{59} + 446433380 X^{58} + 470504008 X^{57} + 478642194 X^{56} + 470504008 X^{55} + 446433380 X^{54} + 411345720 X^{53} + 370859178 X^{52} + 329152344 X^{51} + 288937188 X^{50} + 251696040 X^{49} + 218095185 X^{48} + 188289504 X^{47} + 162136992 X^{46} + 139341984 X^{45} + 119545920 X^{44} + 102384480 X^{43} + 87518832 X^{42} + 74648736 X^{41} + 63513997 X^{40} + 53890760 X^{39} + 45585540 X^{38} + 38430392 X^{37} + 32279026 X^{36} + 27003672 X^{35} + 22492500 X^{34} + 18647400 X^{33} + 15382089 X^{32} + 12620512 X^{31} + 10295504 X^{30} + 8347680 X^{29} + 6724520 X^{28} + 5379616 X^{27} + 4272048 X^{26} + 3365856 X^{25} + 2629575 X^{24} + 2035800 X^{23} + 1560780 X^{22} + 1184040 X^{21} + 888030 X^{20} + 657800 X^{19} + 480700 X^{18} + 346104 X^{17} + 245157 X^{16} + 170544 X^{15} + 116280 X^{14} + 77520 X^{13} + 50388 X^{12} + 31824 X^{11} + 19448 X^{10} + 11440 X^{9} + 6435 X^{8} + 3432 X^{7} + 1716 X^{6} + 792 X^{5} + 330 X^{4} + 120 X^{3} + 36 X^{2} + 8 X + 1
\end{dmath*}

\begin{dmath*}
\Phi_{F_4}(\id; X) = X^{40} + 8 X^{39} + 36 X^{38} + 120 X^{37} + 330 X^{36} + 792 X^{35} + 1716 X^{34} + 3432 X^{33} + 6417 X^{32} + 11296 X^{31} + 18800 X^{30} + 29664 X^{29} + 44496 X^{28} + 63648 X^{27} + 87120 X^{26} + 114528 X^{25} + 144942 X^{24} + 176400 X^{23} + 204904 X^{22} + 222704 X^{21} + 225612 X^{20} + 222704 X^{19} + 204904 X^{18} + 176400 X^{17} + 144942 X^{16} + 114528 X^{15} + 87120 X^{14} + 63648 X^{13} + 44496 X^{12} + 29664 X^{11} + 18800 X^{10} + 11296 X^{9} + 6417 X^{8} + 3432 X^{7} + 1716 X^{6} + 792 X^{5} + 330 X^{4} + 120 X^{3} + 36 X^{2} + 8 X + 1
\end{dmath*}

\begin{dmath*}
\Phi_{E_6}(\id; X) =  
X^{60} + 12 X^{59} + 78 X^{58} + 364 X^{57} + 1365 X^{56} + 4368 X^{55} + 12376 X^{54} + 31824 X^{53} + 75582 X^{52} + 167960 X^{51} + 352716 X^{50} + 705432 X^{49} + 1352114 X^{48} + 2496576 X^{47} + 4460208 X^{46} + 7739264 X^{45} + 13086306 X^{44} + 21622680 X^{43} + 34986364 X^{42} + 55512408 X^{41} + 86428086 X^{40} + 132016816 X^{39} + 197656008 X^{38} + 289553520 X^{37} + 413889127 X^{36} + 574909668 X^{35} + 771474762 X^{34} + 991891204 X^{33} + 1208223291 X^{32} + 1374825408 X^{31} + 1439491744 X^{30} + 1374825408 X^{29} + 1208223291 X^{28} + 991891204 X^{27} + 771474762 X^{26} + 574909668 X^{25} + 413889127 X^{24} + 289553520 X^{23} + 197656008 X^{22} + 132016816 X^{21} + 86428086 X^{20} + 55512408 X^{19} + 34986364 X^{18} + 21622680 X^{17} + 13086306 X^{16} + 7739264 X^{15} + 4460208 X^{14} + 2496576 X^{13} + 1352114 X^{12} + 705432 X^{11} + 352716 X^{10} + 167960 X^{9} + 75582 X^{8} + 31824 X^{7} + 12376 X^{6} + 4368 X^{5} + 1365 X^{4} + 364 X^{3} + 78 X^{2} + 12 X + 1
\end{dmath*}

{\fontsize{5}{12}
\begin{dmath*}
\Phi_{E_7}(\id; X) =  
X^{112} + 14 X^{111} + 105 X^{110} + 560 X^{109} + 2380 X^{108} + 8568 X^{107} + 27132 X^{106} + 77520 X^{105} + 203490 X^{104} + 497420 X^{103} + 1144066 X^{102} + 2496144 X^{101} + 5200300 X^{100} + 10400600 X^{99} + 20058300 X^{98} + 37442160 X^{97} + 67863915 X^{96} + 119759850 X^{95} + 206253124 X^{94} + 347374286 X^{93} + 573171585 X^{92} + 928011200 X^{91} + 1476454420 X^{90} + 2311209432 X^{89} + 3563796768 X^{88} + 5418748776 X^{87} + 8132396454 X^{86} + 12057594892 X^{85} + 17676114371 X^{84} + 25640887902 X^{83} + 36830861774 X^{82} + 52421838406 X^{81} + 73977408477 X^{80} + 103564791120 X^{79} + 143901037014 X^{78} + 198535419396 X^{77} + 272073644850 X^{76} + 370448301048 X^{75} + 501237054234 X^{74} + 674024525076 X^{73} + 900794021190 X^{72} + 1196319168576 X^{71} + 1578499592916 X^{70} + 2068545055416 X^{69} + 2690855030430 X^{68} + 3472362845868 X^{67} + 4441013886057 X^{66} + 5622932352114 X^{65} + 7037726195703 X^{64} + 8691351504144 X^{63} + 10566155272554 X^{62} + 12608421831516 X^{61} + 14715363474420 X^{60} + 16726415799996 X^{59} + 18427842412154 X^{58} + 19582815012880 X^{57} + 19994199701232 X^{56} + 19582815012880 X^{55} + 18427842412154 X^{54} + 16726415799996 X^{53} + 14715363474420 X^{52} + 12608421831516 X^{51} + 10566155272554 X^{50} + 8691351504144 X^{49} + 7037726195703 X^{48} + 5622932352114 X^{47} + 4441013886057 X^{46} + 3472362845868 X^{45} + 2690855030430 X^{44} + 2068545055416 X^{43} + 1578499592916 X^{42} + 1196319168576 X^{41} + 900794021190 X^{40} + 674024525076 X^{39} + 501237054234 X^{38} + 370448301048 X^{37} + 272073644850 X^{36} + 198535419396 X^{35} + 143901037014 X^{34} + 103564791120 X^{33} + 73977408477 X^{32} + 52421838406 X^{31} + 36830861774 X^{30} + 25640887902 X^{29} + 17676114371 X^{28} + 12057594892 X^{27} + 8132396454 X^{26} + 5418748776 X^{25} + 3563796768 X^{24} + 2311209432 X^{23} + 1476454420 X^{22} + 928011200 X^{21} + 573171585 X^{20} + 347374286 X^{19} + 206253124 X^{18} + 119759850 X^{17} + 67863915 X^{16} + 37442160 X^{15} + 20058300 X^{14} + 10400600 X^{13} + 5200300 X^{12} + 2496144 X^{11} + 1144066 X^{10} + 497420 X^{9} + 203490 X^{8} + 77520 X^{7} + 27132 X^{6} + 8568 X^{5} + 2380 X^{4} + 560 X^{3} + 105 X^{2} + 14 X + 1
\end{dmath*}

\begin{dmath*}
\Phi_{E_8}(\id; X) =  
 X^{224} + 16 X^{223} + 136 X^{222} + 816 X^{221} + 3876 X^{220} + 15504 X^{219} + 54264 X^{218} + 170544 X^{217} + 490314 X^{216} + 1307504 X^{215} + 3268760 X^{214} + 7726160 X^{213} + 17383860 X^{212} + 37442160 X^{211} + 77558760 X^{210} + 155117520 X^{209} + 300540195 X^{208} + 565722720 X^{207} + 1037158320 X^{206} + 1855967520 X^{205} + 3247943160 X^{204} + 5567902560 X^{203} + 9364199760 X^{202} + 15471286560 X^{201} + 25140840660 X^{200} + 40225345056 X^{199} + 63432274896 X^{198} + 98672427616 X^{197} + 151532656696 X^{196} + 229911617056 X^{195} + 344867425648 X^{194} + 511738761568 X^{193} + 751616313253 X^{192} + 1093260131568 X^{191} + 1575580950648 X^{190} + 2250830567376 X^{189} + 3188678704316 X^{188} + 4481392321136 X^{187} + 6250379131016 X^{186} + 8654411335376 X^{185} + 11899909726430 X^{184} + 16253743972880 X^{183} + 22059094029480 X^{182} + 29755022456880 X^{181} + 39900530655180 X^{180} + 53204016714000 X^{179} + 70559222415000 X^{178} + 93088956174000 X^{177} + 122198112403600 X^{176} + 159637781803600 X^{175} + 207582568350376 X^{174} + 268723605356016 X^{173} + 346380204411636 X^{172} + 444633588497616 X^{171} + 568486767209176 X^{170} + 724055323460848 X^{169} + 918794715583618 X^{168} + 1161770678138928 X^{167} + 1463980454624568 X^{166} + 1838733945429648 X^{165} + 2302105439429540 X^{164} + 2873468456744240 X^{163} + 3576128407175240 X^{162} + 4438070311335440 X^{161} + 5492841789872915 X^{160} + 6780594951730976 X^{159} + 8349314754544016 X^{158} + 10256265912938336 X^{157} + 12569695526987816 X^{156} + 15370834309497376 X^{155} + 18756245597741680 X^{154} + 22840578197334880 X^{153} + 27759786428738180 X^{152} + 33674888371923680 X^{151} + 40776340990159280 X^{150} + 49289118211055776 X^{149} + 59478584664595816 X^{148} + 71657262954125536 X^{147} + 86192595152779216 X^{146} + 103515798425730976 X^{145} + 124131908566547077 X^{144} + 148631091445728432 X^{143} + 177701277635283672 X^{142} + 212142135277058832 X^{141} + 252880334399639052 X^{140} + 300985963949096496 X^{139} + 357689829578199336 X^{138} + 424401171085373456 X^{137} + 502725074539768286 X^{136} + 594478492077657296 X^{135} + 701703293576640136 X^{134} + 826674125714417776 X^{133} + 971898009271752796 X^{132} + 1140101530303514576 X^{131} + 1334200149310289336 X^{130} + 1557242560896681200 X^{129} + 1812321222812552775 X^{128} + 2102438251446038400 X^{127} + 2430314099127206400 X^{126} +
 2798125247794473600 X^{125} + 3207157343271082176 X^{124} + 3657362950903698816 X^{123} + 4146820218538481536 X^{122} + 4671102775055679616 X^{121} + 5222595588611999776 X^{120} + 5789830289248817536 X^{119} + 6356970527607856576 X^{118} + 6903653712807277696 X^{117} + 7405474472724543776 X^{116} + 7835434962493540736 X^{115} + 8166590077767088256 X^{114} + 8375728793722324096 X^{113} + 8447244468199766416 X^{112} + 8375728793722324096 X^{111} + 8166590077767088256 X^{110} + 7835434962493540736 X^{109} + 7405474472724543776 X^{108} + 6903653712807277696 X^{107} + 6356970527607856576 X^{106} + 5789830289248817536 X^{105} + 5222595588611999776 X^{104} + 4671102775055679616 X^{103} + 4146820218538481536 X^{102} + 3657362950903698816 X^{101} + 3207157343271082176 X^{100} + 2798125247794473600 X^{99} + 2430314099127206400 X^{98} + 2102438251446038400 X^{97} + 1812321222812552775 X^{96} + 1557242560896681200 X^{95} + 1334200149310289336 X^{94} + 1140101530303514576 X^{93} + 971898009271752796 X^{92} + 826674125714417776 X^{91} + 701703293576640136 X^{90} + 594478492077657296 X^{89} + 502725074539768286 X^{88} + 424401171085373456 X^{87} + 357689829578199336 X^{86} + 300985963949096496 X^{85} + 252880334399639052 X^{84} + 212142135277058832 X^{83} + 177701277635283672 X^{82} + 148631091445728432 X^{81} + 124131908566547077 X^{80} + 103515798425730976 X^{79} + 86192595152779216 X^{78} + 71657262954125536 X^{77} + 59478584664595816 X^{76} + 49289118211055776 X^{75} + 40776340990159280 X^{74} + 33674888371923680 X^{73} + 27759786428738180 X^{72} + 22840578197334880 X^{71} + 18756245597741680 X^{70} + 15370834309497376 X^{69} + 12569695526987816 X^{68} + 10256265912938336 X^{67} + 8349314754544016 X^{66} + 6780594951730976 X^{65} + 5492841789872915 X^{64} + 4438070311335440 X^{63} + 3576128407175240 X^{62} + 2873468456744240 X^{61} + 2302105439429540 X^{60} + 1838733945429648 X^{59} + 1463980454624568 X^{58} + 1161770678138928 X^{57} + 918794715583618 X^{56} + 724055323460848 X^{55} + 568486767209176 X^{54} + 444633588497616 X^{53} + 346380204411636 X^{52} + 268723605356016 X^{51} + 207582568350376 X^{50} + 159637781803600 X^{49} + 122198112403600 X^{48} + 93088956174000 X^{47} + 70559222415000 X^{46} + 53204016714000 X^{45} + 39900530655180 X^{44} + 29755022456880 X^{43} + 22059094029480 X^{42} + 16253743972880 X^{41} + 11899909726430 X^{40} + 8654411335376 X^{39} + 6250379131016 X^{38} + 4481392321136 X^{37} + 3188678704316 X^{36} + 2250830567376 X^{35} + 1575580950648 X^{34} + 1093260131568 X^{33} + 751616313253 X^{32} + 511738761568 X^{31} + 344867425648 X^{30} + 229911617056 X^{29} + 151532656696 X^{28} + 98672427616 X^{27} + 63432274896 X^{26} + 40225345056 X^{25} + 25140840660 X^{24} + 15471286560 X^{23} + 9364199760 X^{22} + 5567902560 X^{21} + 3247943160 X^{20} + 1855967520 X^{19} + 1037158320 X^{18} + 565722720 X^{17} + 300540195 X^{16} + 155117520 X^{15} + 77558760 X^{14} + 37442160 X^{13} + 17383860 X^{12} + 7726160 X^{11} + 3268760 X^{10} + 1307504 X^{9} + 490314 X^{8} + 170544 X^{7} + 54264 X^{6} + 15504 X^{5} + 3876 X^{4} + 816 X^{3} + 136 X^{2} + 16 X + 1
\end{dmath*}
}

\subsection[Polynomial roots for exceptional complex reflection groups]{Roots of the polynomials $\Phi_W(\id;X)$ for all exceptional complex reflection groups}

We give below the plot of roots of the polynomials $\Phi_W(\id; X)$ in the complex plane.  For the polynomials themselves, see the data file attached as a supplementary file to this arXiv submission.

\vspace{-3cm}

\subsection*{Rank 2}\ \newline

\vspace{-2cm}

\begin{figure}[H]
\begin{subfigure}[b]{.25\linewidth}
\centering \includegraphics[scale=0.25]{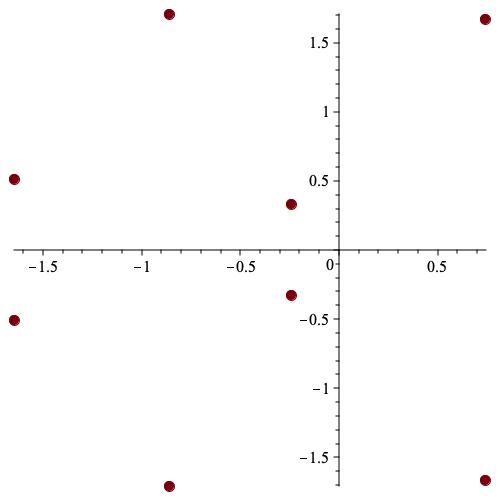}
\caption*{$G_4$}
\end{subfigure}\qquad\begin{subfigure}[b]{.25\linewidth}
\centering \includegraphics[scale=0.25]{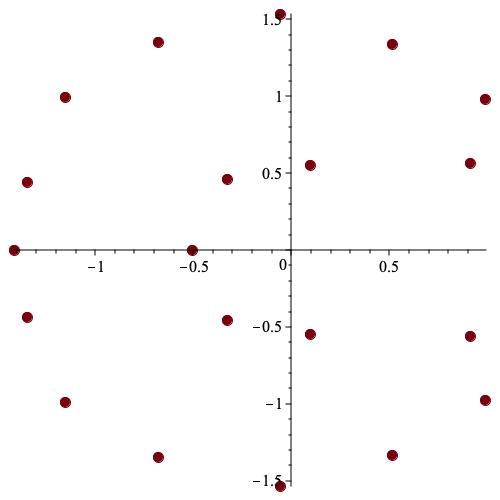}
\caption*{$G_5$}
\end{subfigure}\qquad\begin{subfigure}[b]{.25\linewidth}
\centering \includegraphics[scale=0.25]{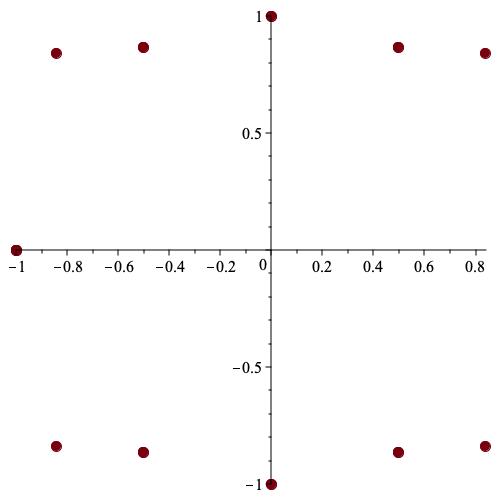}
\caption*{$G_6$}
\end{subfigure}

\vspace{0.7cm}

\begin{subfigure}[b]{.25\linewidth}
\centering \includegraphics[scale=0.25]{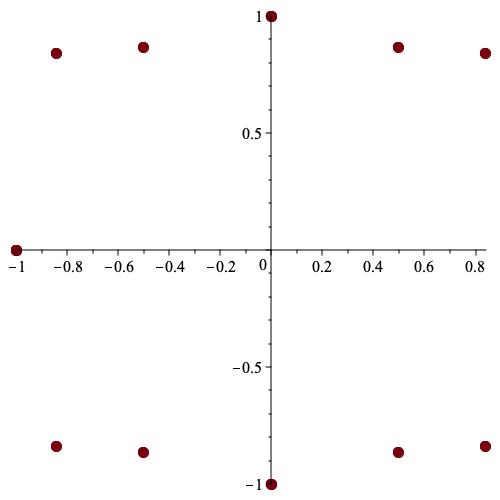}
\caption*{$G_7$}
\end{subfigure}\qquad\begin{subfigure}[b]{.25\linewidth}
\centering \includegraphics[scale=0.25]{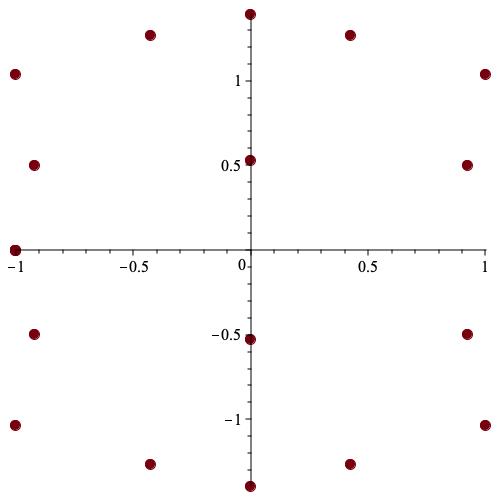}
\caption*{$G_8$}
\end{subfigure}\qquad\begin{subfigure}[b]{.25\linewidth}
\centering \includegraphics[scale=0.25]{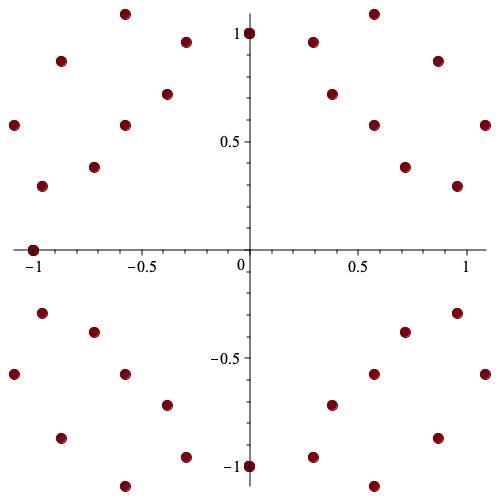}
\caption*{$G_9$}
\end{subfigure}

\vspace{0.7cm}

\begin{subfigure}[b]{.25\linewidth}
\centering \includegraphics[scale=0.25]{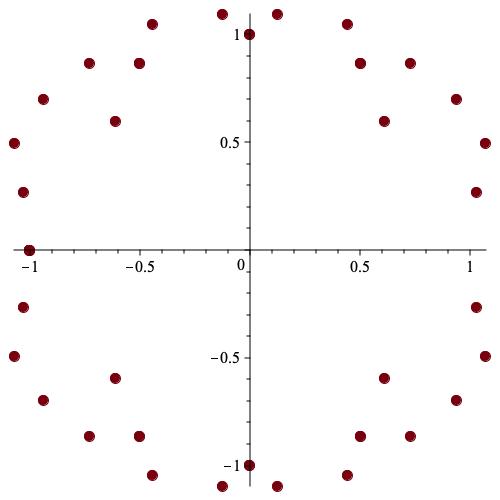}
\caption*{$G_{10}$}
\end{subfigure}\qquad\begin{subfigure}[b]{.25\linewidth}
\centering \includegraphics[scale=0.25]{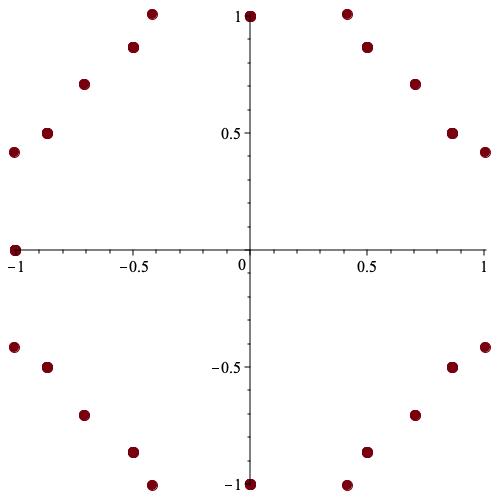}
\caption*{$G_{11}$}
\end{subfigure}\qquad\begin{subfigure}[b]{.25\linewidth}
\centering \includegraphics[scale=0.25]{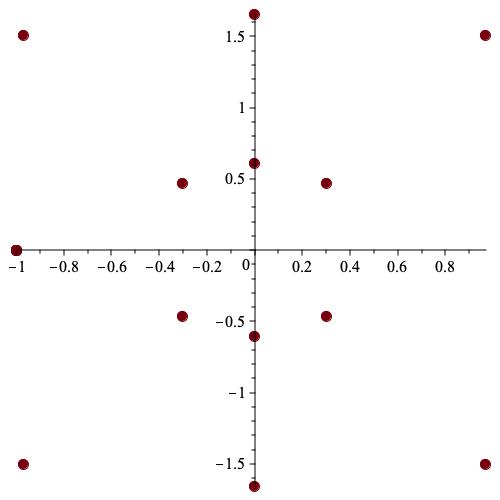}
\caption*{$G_{12}$}
\end{subfigure}

\vspace{0.7cm}

\begin{subfigure}[b]{.25\linewidth}
\centering \includegraphics[scale=0.25]{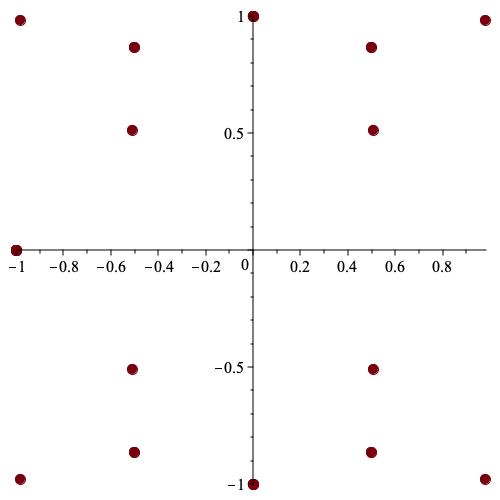}
\caption*{$G_{13}$}
\end{subfigure}\qquad\begin{subfigure}[b]{.25\linewidth}
\centering \includegraphics[scale=0.25]{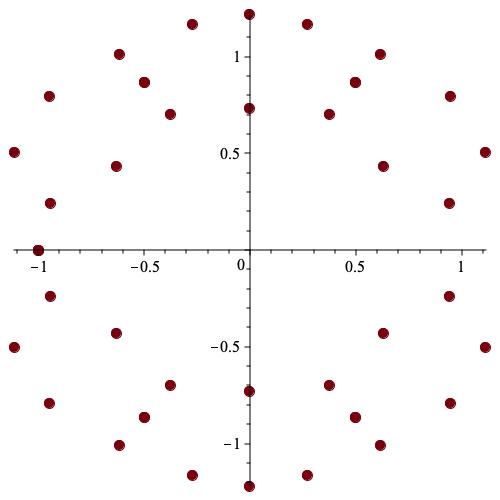}
\caption*{$G_{14}$}
\end{subfigure}\qquad\begin{subfigure}[b]{.25\linewidth}
\centering \includegraphics[scale=0.25]{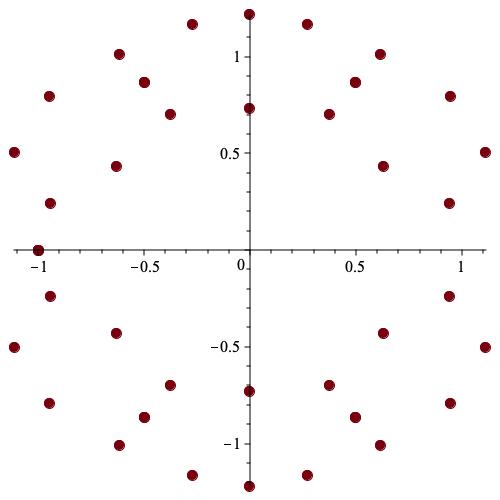}
\caption*{$G_{15}$}
\end{subfigure}
\end{figure}

\begin{figure}[H]
\begin{subfigure}[b]{.25\linewidth}
\centering \includegraphics[scale=0.25]{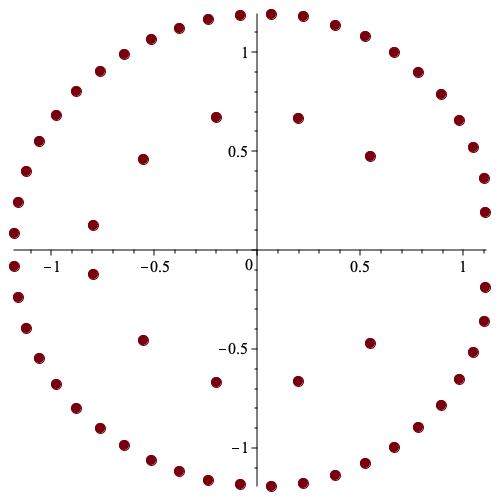}
\caption*{$G_{16}$}
\end{subfigure}\begin{subfigure}[b]{.25\linewidth}
\centering \includegraphics[scale=0.25]{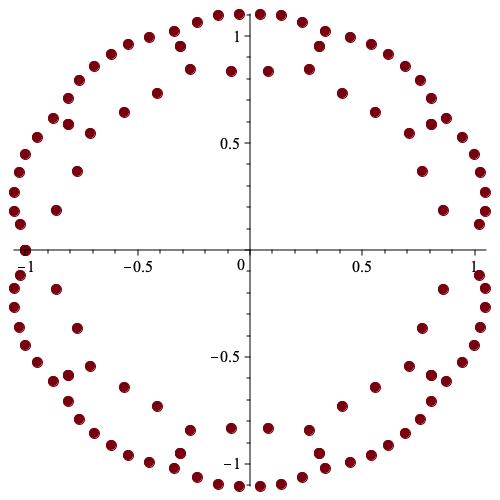}
\caption*{$G_{17}$}
\end{subfigure}\begin{subfigure}[b]{.25\linewidth}
\centering \includegraphics[scale=0.25]{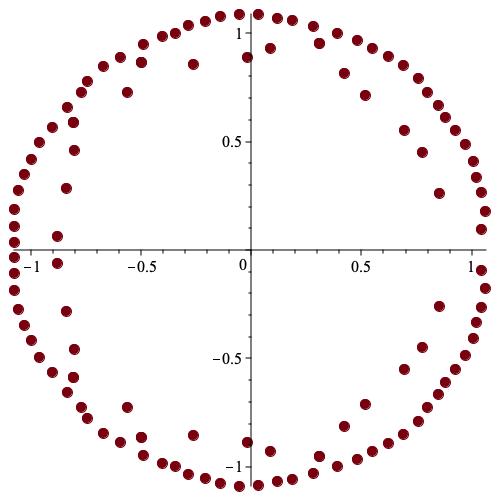}
\caption*{$G_{18}$}
\end{subfigure}\begin{subfigure}[b]{.25\linewidth}
\centering \includegraphics[scale=0.25]{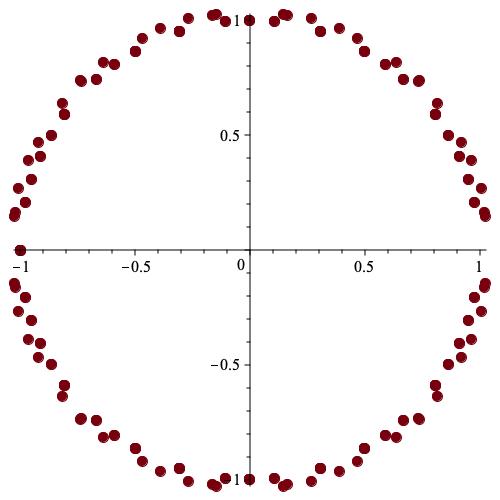}
\caption*{$G_{19}$}
\end{subfigure}

\begin{subfigure}[b]{.25\linewidth}
\centering \includegraphics[scale=0.25]{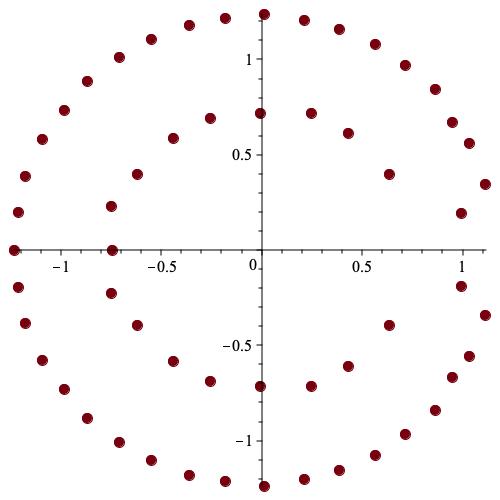}
\caption*{$G_{20}$}
\end{subfigure}\qquad\begin{subfigure}[b]{.25\linewidth}
\centering \includegraphics[scale=0.25]{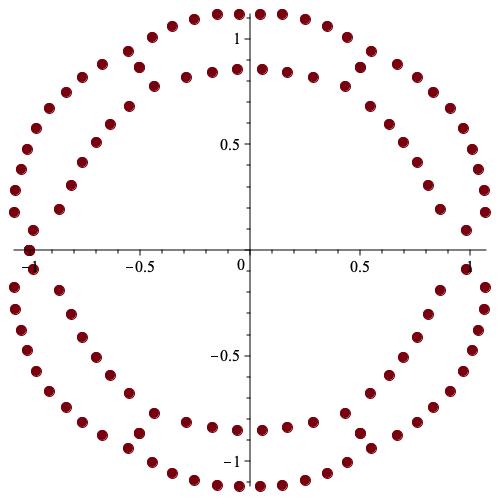}
\caption*{$G_{21}$}
\end{subfigure}\qquad\begin{subfigure}[b]{.25\linewidth}
\centering \includegraphics[scale=0.25]{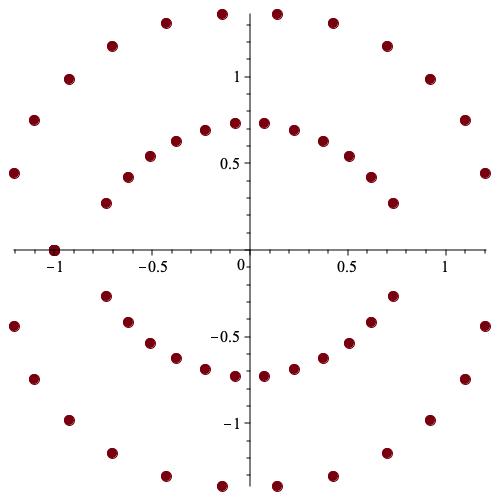}
\caption*{$G_{22}$}
\end{subfigure}
\end{figure}

\vspace{1cm}

\subsection*{Rank 3}\ \newline

\begin{figure}[H]
\begin{subfigure}[b]{.25\linewidth}
\centering \includegraphics[scale=0.25]{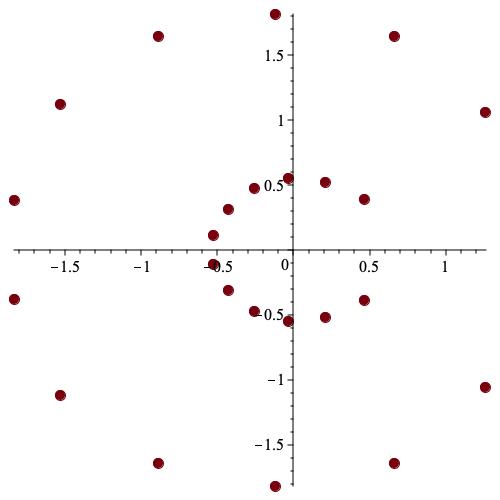}
\caption*{$G_{23}=H_3$}
\end{subfigure}\qquad\begin{subfigure}[b]{.25\linewidth}
\centering \includegraphics[scale=0.25]{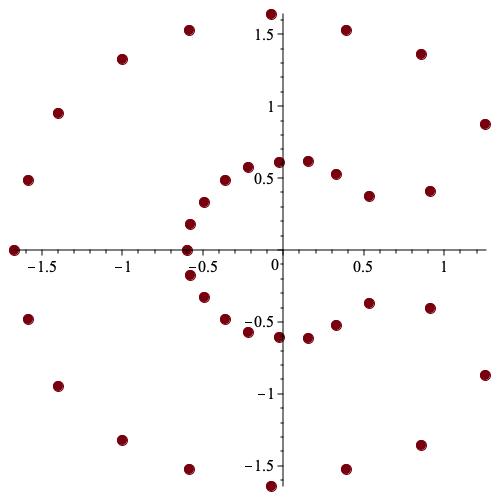}
\caption*{$G_{24}$}
\end{subfigure}\qquad\begin{subfigure}[b]{.25\linewidth}
\centering \includegraphics[scale=0.25]{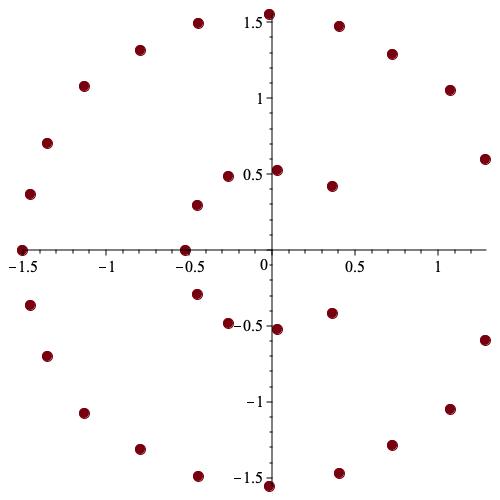}
\caption*{$G_{25}$}
\end{subfigure}

\vspace{1cm}

\begin{subfigure}[b]{.25\linewidth}
\centering \includegraphics[scale=0.25]{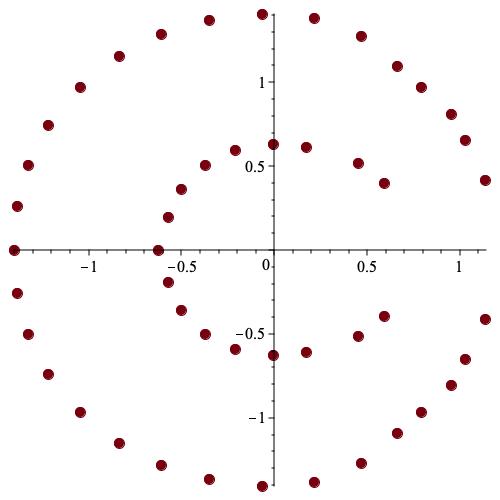}
\caption*{$G_{26}$}
\end{subfigure}\hspace{2cm}\begin{subfigure}[b]{.25\linewidth}
\centering \includegraphics[scale=0.25]{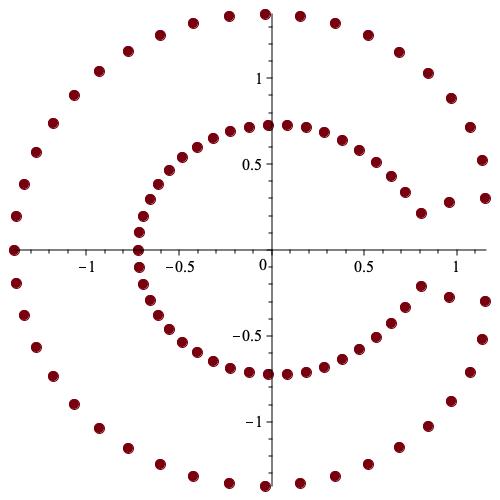}
\caption*{$G_{27}$}
\end{subfigure}
\end{figure}

\newpage

\subsection*{Rank 4}\ \newline

\begin{figure}[H]
\begin{subfigure}[b]{.25\linewidth}
\centering \includegraphics[scale=0.25]{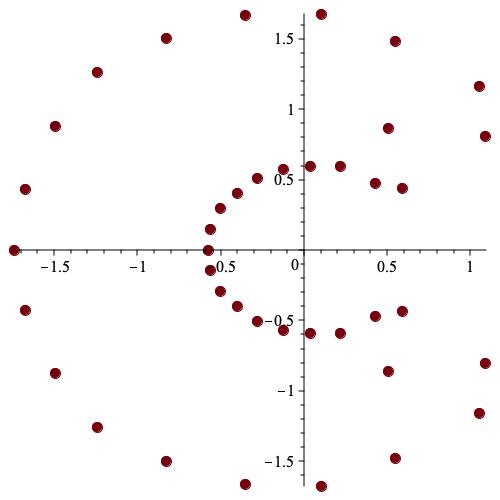}
\caption*{$G_{28}=F_4$}
\end{subfigure}\hspace{2cm}\begin{subfigure}[b]{.25\linewidth}
\centering \includegraphics[scale=0.25]{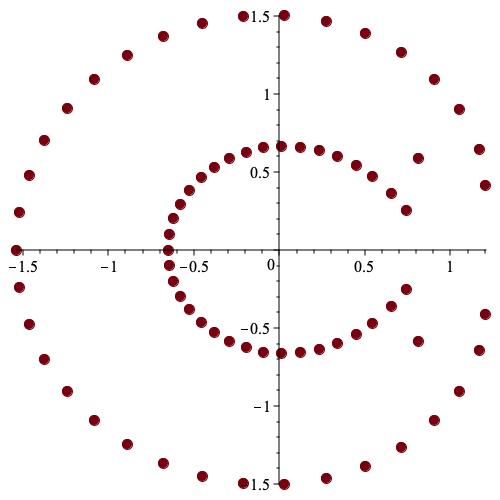}
\caption*{$G_{29}$}
\end{subfigure}

\vspace{1cm}
\begin{subfigure}[b]{.25\linewidth}
\centering \includegraphics[scale=0.25]{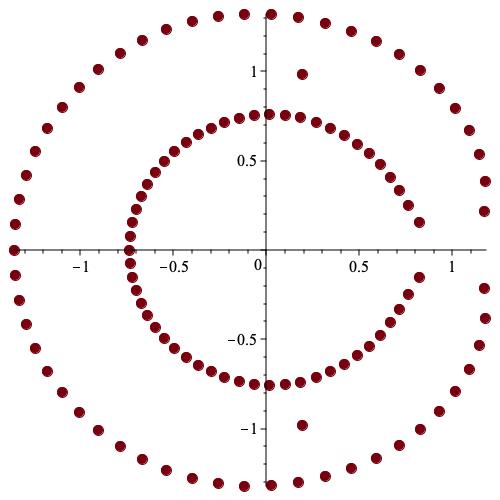}
\caption*{$G_{30}=H_4$}
\end{subfigure}\qquad\begin{subfigure}[b]{.25\linewidth}
\centering \includegraphics[scale=0.25]{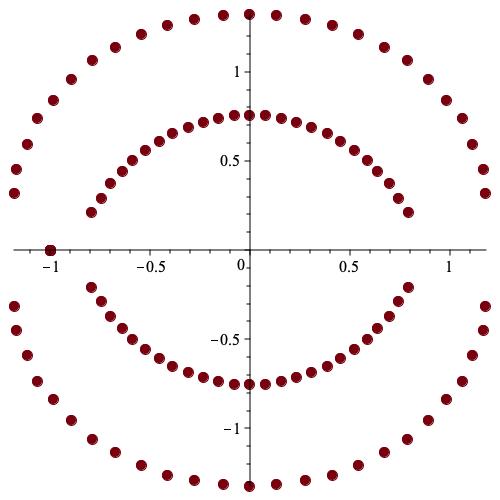}
\caption*{$G_{31}$}
\end{subfigure}\qquad\begin{subfigure}[b]{.25\linewidth}
\centering \includegraphics[scale=0.25]{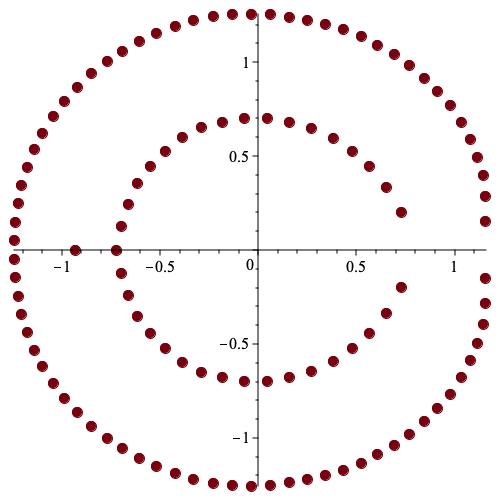}
\caption*{$G_{32}$}
\end{subfigure}
\end{figure}

\subsection*{Ranks 5 and 6}\ \newline

\begin{figure}[H]
\begin{subfigure}[b]{.25\linewidth}
\centering \includegraphics[scale=0.25]{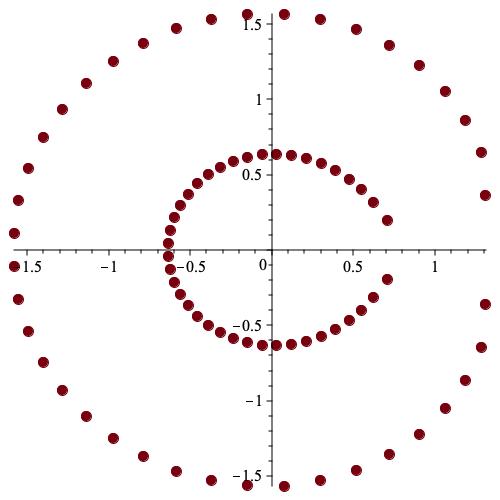}
\caption*{$G_{33}$}
\end{subfigure}\hspace{2cm}\begin{subfigure}[b]{.25\linewidth}
\centering \includegraphics[scale=0.25]{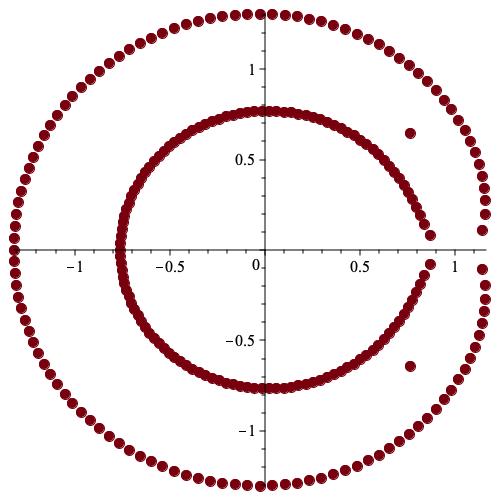}
\caption*{$G_{34}$}
\end{subfigure}
\end{figure}

\subsection*{E-series}\ \newline

\begin{figure}[H]
\begin{subfigure}[b]{.25\linewidth}
\centering \includegraphics[scale=0.25]{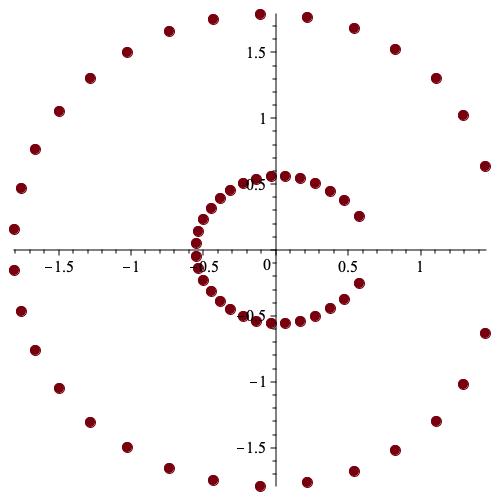}
\caption*{$G_{35}=E_{6}$}
\end{subfigure}
\qquad
\begin{subfigure}[b]{.25\linewidth}
\centering \includegraphics[scale=0.25]{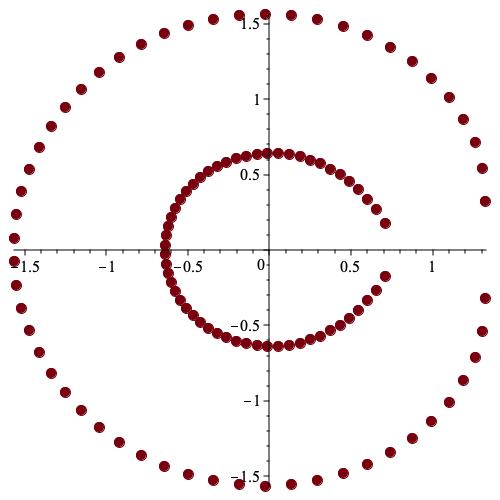}
\caption*{$G_{36}=E_{7}$}
\end{subfigure}
\qquad
\begin{subfigure}[b]{.25\linewidth}
\centering \includegraphics[scale=0.25]{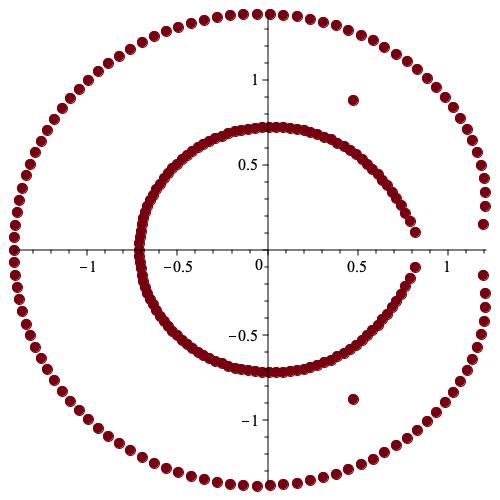}
\caption*{$G_{37}=E_{8}$}
\end{subfigure}
\end{figure}

\end{document}